\title{Optimal sub-Gaussian variance proxy for 3-mass distributions}
\date{}
\let\oldsubfigure\subfigure
\renewcommand{\subfigure}[2][]{%
    \oldsubfigure[#1]{%
        \begin{tabular}[b]{@{}c@{}}
            #2
        \end{tabular}%
    }%
}
\tikzset{draft/.style={draw=none}}
\pgfplotsset{compat=1.18}
\definecolor{C0}{HTML}{3182ce}
\definecolor{darkred}{rgb}{0.55,0.0,0.0}
\crefname{appendix}{Appendix}{Appendices}
\Crefname{appendix}{Appendix}{Appendices}
\newcommand{\doi}[1]{\href{https://doi.org/#1}{\texttt{https://doi.org/#1}}}
\renewcommand{\P}{\mathbb{P}}
\newcommand{\E}{\mathbb{E}}
\newcommand{\arcosh}{\mathrm{arcosh}}
\newcommand{\beq}{\begin{equation}}
\newcommand{\eeq}{\end{equation}}
\newcommand{\bea}{\begin{eqnarray}}
\newcommand{\eea}{\end{eqnarray}}
\newcommand{\beqq}{\begin{equation*}}
\newcommand{\eeqq}{\end{equation*}}
\newcommand{\beaa}{\begin{eqnarray*}}
\newcommand{\eeaa}{\end{eqnarray*}}
\def\beaq{\begin{eqnarray}}
\def\eeaq{\end{eqnarray}}
\newcommand{\sectionref}[1]{Section~\ref{#1}}
\newcommand{\appendixref}[1]{Appendix~\ref{#1}}
\newtheorem{theorem}{Theorem}[section]
\newtheorem{lemma}[theorem]{Lemma}
\newtheorem{proposition}[theorem]{Proposition}
\newtheorem{corollary}[theorem]{Corollary}
\newtheorem{definition}[theorem]{Definition}
\newtheorem{remark}[theorem]{Remark}
\newenvironment{proofof}[1]{\begin{proof}[Proof of #1]}{\end{proof}}
\def\br{\begin{remark}\rm\small}
\def\er{\end{remark}}
\def\bt{\begin{theorem}}
\def\et{\end{theorem}}
\def\bd{\begin{definition}}
\def\ed{\end{definition}}
\def\bp{\begin{proposition}}
\def\ep{\end{proposition}}
\def\bl{\begin{lemma}}
\def\el{\end{lemma}}
\def\bc{\begin{corollary}}
\def\ec{\end{corollary}}
\author{
Soufiane Atouani\\
Université Grenoble Alpes, Inria, CNRS, Grenoble INP, LJK, 38000 Grenoble, France\\
\texttt{soufiane.atouani@inria.fr}
\and
Olivier Marchal\\
Université Jean Monnet Saint-Étienne, CNRS, Institut Camille Jordan UMR 5208,\\
Institut Universitaire de France, Les Forges 2, 20 Rue du Dr Annino, 42000 Saint-Étienne, France\\
\texttt{olivier.marchal@univ-st-etienne.fr}
\and
Julyan Arbel\\
Université Grenoble Alpes, Inria, CNRS, Grenoble INP, LJK, 38000 Grenoble, France\\
\texttt{julyan.arbel@inria.fr}
}
\begin{document}

\maketitle

\begin{abstract}
We investigate the problem of characterizing the optimal variance proxy for sub-Gaussian random variables,whose moment-generating function exhibits bounded growth at infinity. 
We apply a general characterization method to discrete random variables with equally spaced atoms. We thoroughly study 3-mass distributions, thereby generalizing the well-studied Bernoulli case. 
We also prove that the discrete uniform distribution over $N$ points is strictly sub-Gaussian. 
Finally, we provide an open-source Python package that combines analytical and numerical approaches to compute optimal sub-Gaussian variance proxies across a wide range of distributions.
\end{abstract}

\section{Introduction}\label{s:1}
The sub-Gaussian property, first characterized by \citet{Kahane1960PropritsLD} and \citet{buldygin1980sub}, has become a critical tool for understanding the tail behavior of random variables. Since these pioneering works, this property has emerged as a fundamental concept in probability theory due to its profound implications in various mathematical disciplines, such as concentration inequalities \citep{Hoeffding1963, boucheron2003concentration, raginsky2013concentration} and Bayesian statistics \citep{catoni2007pac}. In machine learning, sub-Gaussian tails play a crucial role in bandit algorithms \citep{bubeck2012regret}, in the study of the singular values of random matrices \citep{rudelson2010non}, and in Bayesian neural networks \citep{vladimirova2019understanding, vladimirova2020sub}.

\begin{definition}[Sub-Gaussian variables] A random variable \( X \) with finite mean \( \mu = \mathbb{E}[X] \) is \textit{sub-Gaussian} if there exists a constant \( \sigma^2 > 0 \) such that $\mathbb{E}[\exp(\lambda (X - \mu))] \leq \exp(\lambda^2 \sigma^2/2)$ for all $\lambda \in \mathbb{R}$.
Such a constant \( \sigma^2 \) is called a \textit{variance proxy}, and we say that \( X \) is \(\sigma^2\)-sub-Gaussian. The \textit{optimal variance proxy} is $\sigma^2_{\mathrm{opt}}(X) = \inf \left\{ \sigma^2 > 0 \text{ such that } X \text{ is } \sigma^2\text{-sub-Gaussian} \right\}$. 
A variance proxy is always lower bounded by the variance, as shown by a Taylor expansion of the moment-generating function. When $\sigma^2_{\mathrm{opt}}(X) = \mathrm{Var}[X]$, \(X\) is called \textit{strictly} sub-Gaussian.
\end{definition}

Extensive research on optimal variance proxy has focused on continuous distributions such as Beta and Dirichlet distributions \citep{marchal2017sub}, other bounded support distributions such as Kumaraswamy and triangular distributions \citep{arbel2020strict}, as well as truncated Gaussian and exponential distributions \citep{barreto2024optimal}. 
Despite the prevalence of discrete distributions in modeling count data, binary outcomes, and combinatorial stochastic processes, their sub-Gaussianity remains largely underexplored. 
The first known result on discrete distributions covers the Bernoulli distribution, the simplest discrete distribution, supported on two points, or atoms, 1 and 0, with masses $p$ and $1-p$. \citet{Kearns1998LargeDM} derived the following "exquisitely delicate inequality", quoting \citet{berend2013concentration}, for the Bernoulli moment-generating function:
\begin{equation}\label{eq:KS}
    (1-p)e^{-\lambda p}+pe^{\lambda(1-p)}\leq\exp\left(\frac{1-2p}{4\ln((1-p)/p)}\lambda^2\right),\quad p\in[0,1],\,\,\lambda\in\mathbb{R},
\end{equation}
which is tight and thus implies an optimal variance proxy of $\frac{1-2p}{2\ln((1-p)/p)}$. This result also provides the optimal variance proxy for the binomial distribution which is written as an i.i.d. sum of Bernoulli random variables. 
A natural generalization of the Bernoulli to more than two atoms is the categorical distribution with $N$ atoms $x_i$, $i\in \{1,\ldots,N\}$, and weights $P(X=x_i)=p_i>0$, with $p_1+\cdots+p_N=1$. While a general treatment of $N$-mass categorical distributions seems intractable, we address the case of the 3-mass distribution when atoms are equally spaced. 
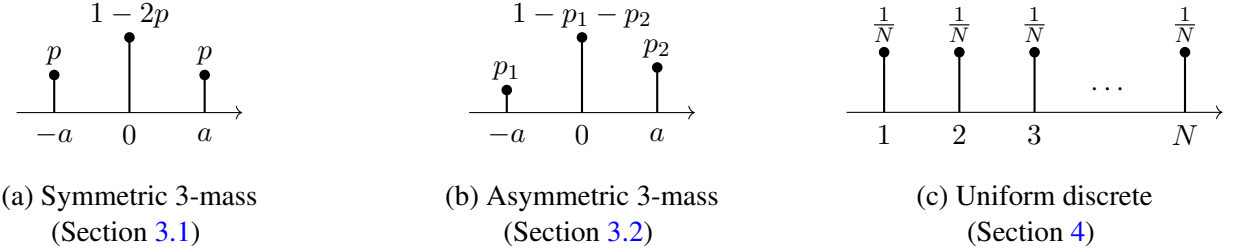
\begin{figure}[tbp]
    \centering
    \begin{minipage}[b]{0.3\textwidth}
        \centering
        \begin{tikzpicture}[scale=1]
            \draw[->] (-1.5,0) -- (1.5,0);
            \draw[thick] (-1,0) -- (-1,0.5);
            \draw[thick] (0,0) -- (0,1);
            \draw[thick] (1,0) -- (1,0.5);
            \fill (-1,0.5) circle (2pt) node[above] {$p$};
            \fill (0,1) circle (2pt) node[above] {$1 - 2p$};
            \fill (1,0.5) circle (2pt) node[above] {$p$};
            \node at (-1,-0.3) {$-a$};
            \node at (0,-0.3) {$0$};
            \node at (1,-0.3) {$a$};
        \end{tikzpicture}
        
        \vspace{0.2cm}
        (a) Symmetric 3-mass\\
        (\Cref{subsec;symmthreepoints})
        \label{fig:symmetric}
    \end{minipage}
    \hfill
    \begin{minipage}[b]{0.3\textwidth}
        \centering
        \begin{tikzpicture}[scale=1]
            \draw[->] (-1.5,0) -- (1.5,0);
            \draw[thick] (-1,0) -- (-1,0.3);
            \draw[thick] (0,0) -- (0,1);
            \draw[thick] (1,0) -- (1,0.6);
            \fill (-1,0.3) circle (2pt) node[above] {$p_1$};
            \fill (0,1) circle (2pt) node[above] {$1 - p_1 - p_2$};
            \fill (1,0.6) circle (2pt) node[above] {$p_2$};
            \node at (-1,-0.3) {$-a$};
            \node at (0,-0.3) {$0$};
            \node at (1,-0.3) {$a$};
        \end{tikzpicture}
        
        \vspace{0.2cm}
        (b) Asymmetric 3-mass\\
        (\Cref{subsec;asymmthreepoints})
        \label{fig:asymmetric}
    \end{minipage}
    \hfill
    \begin{minipage}[b]{0.3\textwidth}
        \centering
        \begin{tikzpicture}[scale=1]
            \draw[->] (0.5,0) -- (5.5,0);
            \foreach \x in {1,2,3} {
                \draw[thick] (\x,0) -- (\x,0.8);
                \fill (\x,0.8) circle (2pt) node[above] {$\frac{1}{N}$};
                \node at (\x,-0.3) {$\x$};
            }
            \node at (4,0.3) {$\cdots$};
            \draw[thick] (5,0) -- (5,0.8);
            \fill (5,0.8) circle (2pt) node[above] {$\frac{1}{N}$};
            \node at (5,-0.3) {$N$};
        \end{tikzpicture}
        
        \vspace{0.2cm}
        (c) Uniform discrete\\
        (\sectionref{sec:uniform})
        \label{fig:uniform}
    \end{minipage}
    \caption{Probability mass function of the discrete distributions covered in the paper.}
    \label{fig:distrib-plots}
\end{figure}
\paragraph{Contributions and outline.}
We first provide in \sectionref{sec:charac} a characterization of the optimal sub-Gaussian variance proxy for random variables with bounded moment-generating functions, following a general methodology based on function variation analysis. This characterization yields a practical computational procedure via critical points identification and equation solving, enabling explicit computation of the optimal variance proxy (Theorem~\ref{GeneralCharac}). These results are made even more precise when the number of critical points is at most two (Proposition~\ref{Prop01Zeros} and Proposition~\ref{LemmaMerged}). We then apply this approach with a focus on discrete distributions. We start in \sectionref{sec:3-mass} with 3-point distributions, both symmetric and asymmetric, extending the classical Bernoulli case. In the symmetric setting,  Theorem~\ref{TheoSymmetric3mass} uncovers a phase transition: for probabilities $p \geq \frac{1}{6}$, strict sub-Gaussianity holds, whereas for $p < \frac{1}{6}$, it does not, and we derive an explicit characterization of the optimal proxy through a pair of solvable equations. In the asymmetric case, we delineate two regimes depending on the relationship between the central mass and the edge probabilities, and in one of these, we provide a closed-form expression for the optimal proxy (Theorem~\ref{TheoRegime1}). We establish in \sectionref{sec:uniform} that discrete uniform distribution over $N$ equally spaced points is strictly sub-Gaussian for all $N\geq 2$, using a moment-based analysis of the exponential family induced by the log-partition function (Theorem~\ref{TheoRDisceteDistribution}). 
Finally, we describe in  \sectionref{sec:software} the computational framework we developed to support reproducibility, providing an open-source Python package\footnote{\label{note1}The package is  available at \url{https://github.com/jarbel/sub-Gaussian-implementation.git} with comprehensive documentation, installation instructions, and usage examples.} that combines analytical and numerical approaches to compute optimal sub-Gaussian variance proxies across a wide range of distributions.
\section{Characterization of optimal sub-Gaussian variance proxy}\label{sec:charac}
\sloppy{Let $Y$ be a real random variable with finite moments and \( \mu = \mathbb{E}[Y] \). Denote by $M_Y(\lambda):=\ln \left(\mathbb{E}[\exp(\lambda (Y - \mu))]\right)$ the cumulant-generating function of the centered random variable $Y-\mu$. In this paper, we shall always assume that the random variable $Y$ is such that $M_Y$ is a smooth function}. Define 
\beq
\label{eq:g_Y}
g_Y(\lambda;\sigma^2):=\frac{1}{2}\lambda^2\sigma^2- M_Y(\lambda)= \frac{1}{2}\lambda^2\sigma^2-\ln \left(\mathbb{E}[\exp(\lambda (Y - \mu))]\right),
\eeq 
which is a smooth function of $(\lambda,\sigma^2)$. We have $g'_Y(\lambda;\sigma^2):= \lambda\sigma^2- M_Y'(\lambda) \, \text{ and } \, g_Y''(\lambda,\sigma^2)=\sigma^2-M_Y''(\lambda)$. 
Observe that $g_Y(0,\sigma^2)=g_Y'(0,\sigma^2)=0$. Moreover, for $\lambda\neq 0$, the equation $g_Y'(\lambda,\sigma^2)=0$ is equivalent to $\sigma^2=M_Y'(\lambda)/\lambda $. Thus, the system of equations $g_Y(\lambda;\sigma^2)=0$ and $g_Y'(\lambda,\sigma^2)=0$ with $\lambda\neq 0$ is equivalent to  
\beq \label{eq:sys-eq}
\sigma^2=\frac{1}{\lambda} M_Y'(\lambda) \text{ and } \lambda\neq 0 \text{ solution of  } \lambda M_Y'(\lambda)- 2M_Y(\lambda)=0.\eeq
Define the following sets:
\begin{align*}
\mathcal{L}_c^*&:=\Big\{\lambda_c \in \mathbb{R}^* \;\Big|\;  \lambda_c M_Y'(\lambda_c)- 2M_Y(\lambda_c)=0 \,\, \text{ and }\lambda_c \text{ local minimum of } g_Y\Big(.;\sigma^2=\frac{ M_Y'(\lambda_c)}{\lambda_c}\Big) \Big\},\\
\mathcal{S}_c^*&:=\Big\{\frac{M_Y'(\lambda_c)}{\lambda_c}  \;\Big|\; \lambda_c\in \mathcal{L}_c^*\Big\}.
\end{align*}
We complement the two previous sets by defining $\mathcal{L}_c:=\mathcal{L}_c^*\cup\{0\}\,\,, \,\mathcal{S}_c:=\mathcal{S}_c^*\cup\{\operatorname{Var}[Y]\}$. Let us first make the following observation.
\begin{proposition}[Asymptotics at infinity and sufficient condition for sub-Gaussianity.]\label{AsymptInf}
\sloppy{If the cumulant-generating function $M_Y$ is a smooth function and satisfies $M_Y(\lambda)\overset{\lambda\to\pm \infty}{=}o(\lambda^2)$, then $Y$ is \emph{sub-Gaussian} and $\lim_{\lambda\to \pm\infty}g_Y(\lambda;\sigma^2)=+\infty\,,\, \lim_{\lambda\to \pm\infty}g'_Y(\lambda;\sigma^2)=\pm\infty\,,\, \lim_{\lambda\to \pm\infty}g''_Y(\lambda;\sigma^2)=\sigma^2$.}
\end{proposition}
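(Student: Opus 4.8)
The plan is to lean on $M_Y$ being a genuine cumulant-generating function, hence convex, together with $M_Y(0)=M_Y'(0)=\E[Y-\mu]=0$, so that $0$ is a global minimum of $M_Y$ and therefore $M_Y\ge 0$ on $\mathbb{R}$; combined with the hypothesis $M_Y(\lambda)=o(\lambda^2)$, this is essentially all the input needed. For sub-Gaussianity I would study $\lambda\mapsto M_Y(\lambda)/\lambda^2$ on $\mathbb{R}^*$: a second-order Taylor expansion at $0$ (using smoothness and $M_Y''(0)=\mathrm{Var}[Y]$) shows it extends continuously at $0$ with value $\tfrac12\mathrm{Var}[Y]$, while the hypothesis makes it tend to $0$ at $\pm\infty$; being continuous on $\mathbb{R}$ and vanishing at infinity, it attains a finite maximum $C$ (with $C\ge\tfrac12\mathrm{Var}[Y]>0$ when $Y$ is non-degenerate, the constant case being trivial). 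Taking $\sigma^2:=2C>0$ then gives $M_Y(\lambda)\le\tfrac12\sigma^2\lambda^2$ for all $\lambda$, i.e. $\E[e^{\lambda(Y-\mu)}]\le e^{\sigma^2\lambda^2/2}$, so $Y$ is $\sigma^2$-sub-Gaussian.

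I would then dispatch the three limits in turn, for an arbitrary fixed $\sigma^2>0$. The first is immediate, since $g_Y(\lambda;\sigma^2)=\lambda^2\big(\tfrac12\sigma^2-M_Y(\lambda)/\lambda^2\big)$ and the bracket tends to $\tfrac12\sigma^2>0$. For $g_Y'(\lambda;\sigma^2)=\sigma^2\lambda-M_Y'(\lambda)$ the key is to bootstrap $M_Y(\lambda)=o(\lambda^2)$ into $M_Y'(\lambda)=o(\lambda)$ using convexity: for $\lambda>0$, monotonicity of $M_Y'$ and the chord inequality give $0\le M_Y'(\lambda)\le\frac{M_Y(2\lambda)-M_Y(\lambda)}{\lambda}\le\frac{M_Y(2\lambda)}{\lambda}=o(\lambda)$, and the same argument with reversed inequalities covers $\lambda\to-\infty$; hence $g_Y'(\lambda;\sigma^2)=\lambda\big(\sigma^2-M_Y'(\lambda)/\lambda\big)$ tends to $\pm\infty$. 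Finally $g_Y''(\lambda;\sigma^2)=\sigma^2-M_Y''(\lambda)$ with $M_Y''\ge 0$ by convexity, so $\limsup_{\lambda\to\pm\infty}g_Y''\le\sigma^2$ comes for free, and the statement reduces to $M_Y''(\lambda)\to 0$ as $\lambda\to\pm\infty$.

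This last reduction is where I expect the genuine work to lie. The claim reads: the variance of $Y$ under the exponentially tilted law $\P_\lambda$, with $d\P_\lambda/d\P\propto e^{\lambda(Y-\mu)}$, tends to $0$. When $Y$ has bounded support — the case for every distribution in this paper — this is clean: as $\lambda\to+\infty$ one has $\P_\lambda\big(Y<b-\varepsilon\big)\to 0$ for every $\varepsilon>0$, where $b$ is the essential supremum of $Y$, because numerator and denominator of this ratio differ by a factor of order $e^{-\lambda\varepsilon/2}$, and since $Y$ stays uniformly bounded under all $\P_\lambda$ this forces its tilted variance to $0$ (symmetrically at $-\infty$). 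For the fully general statement one has to work harder: convexity and $M_Y(\lambda)=o(\lambda^2)$ by themselves yield only the averaged form $\int_\lambda^{2\lambda}M_Y''(t)\,dt=M_Y'(2\lambda)-M_Y'(\lambda)=o(\lambda)$, and upgrading this to a pointwise limit is the delicate point, to be obtained either from additional regularity of the tilted variance (e.g. its eventual monotonicity) or by restricting to the light-tailed regime in which the proposition is used.
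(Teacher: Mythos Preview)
Your argument is correct and is substantially more careful than the paper's own proof, which dispatches all three limits in a single sentence (``obvious from the definition of $g_Y$'') and handles sub-Gaussianity by the same boundedness-of-$M_Y(\lambda)/\lambda^2$ idea you use (the paper writes $\sigma^2=C/2$ where it should be $2C$, but the intent is yours). You go further: you explicitly derive $M_Y'(\lambda)=o(\lambda)$ from convexity of the CGF via the chord inequality, and you correctly isolate $M_Y''(\lambda)\to 0$ as the only genuinely nontrivial point---which the paper does not justify at all. Your tilting argument for bounded support covers every distribution actually treated in the paper, so for the paper's purposes your proof is complete; your caution about the fully general statement is well placed, since $M_Y=o(\lambda^2)$ together with convexity yields only the averaged control $\int_\lambda^{2\lambda}M_Y''=o(\lambda)$, not pointwise convergence, and upgrading this does require the sort of extra regularity (eventual monotonicity of the tilted variance, or boundedness) that you flag.
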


    
\begin{figure}[htbp]
  \centering
  \scalebox{.8}{
  \usetikzlibrary{arrows.meta}

\centering
\begin{tikzpicture}

\pgfmathsetmacro{\pone}{0.05}
\pgfmathsetmacro{\ptwo}{0.01}
\pgfmathsetmacro{\m}{\ptwo-\pone}

\pgfmathdeclarefunction{uzero}{1}{\pgfmathparse{\pone*exp(-#1)+\ptwo*exp(#1)+1-\pone-\ptwo}}
\pgfmathdeclarefunction{duzero}{1}{\pgfmathparse{-\pone*exp(-#1)+\ptwo*exp(#1)}}
\pgfmathdeclarefunction{ntilde}{1}{\pgfmathparse{#1*(duzero(#1)/uzero(#1)) - 2*ln(uzero(#1))}}
\pgfmathdeclarefunction{gfun}{2}{\pgfmathparse{0.5*#2*#1*#1 + \m*#1 - ln(uzero(#1))}}

\pgfmathsetmacro{\lamone}{-5.4108345267}
\pgfmathsetmacro{\lamtwo}{-1.8174215932}
\pgfmathsetmacro{\lamthr}{ 9.0943387806}

\pgfmathsetmacro{\scone}{(duzero(\lamone)/uzero(\lamone) - \m)/(\lamone)}
\pgfmathsetmacro{\sctwo}{(duzero(\lamtwo)/uzero(\lamtwo) - \m)/(\lamtwo)}
\pgfmathsetmacro{\scthr}{(duzero(\lamthr)/uzero(\lamthr) - \m)/(\lamthr)}

\pgfmathsetmacro{\gA}{gfun(\lamone,\scone)}
\pgfmathsetmacro{\gB}{gfun(\lamtwo,\sctwo)}
\pgfmathsetmacro{\gC}{gfun(\lamthr,\scthr)}

\begin{axis}[
  name=main, clip=false,
  width=16cm, height=10cm,
  xlabel={$\,\lambda$},
  axis lines=middle,
  xmin=-6, xmax=10,
  ymin=-0.3, ymax=1.6,
  domain=-6:10, samples=800,
  enlarge x limits={abs=0.6}, enlarge y limits=0.05
]
  \addplot[smooth, solid, line width=1.2pt, draw=black, restrict y to domain=-0.3:1.7] ({x},{ntilde(x)});
  \addplot[domain=-6:10.6, samples=2, gray]{0};

  \addplot[only marks, mark=*, mark size=1.7pt] coordinates {(\lamone,0) (\lamtwo,0) (\lamthr,0)};
  \node[above right] at (axis cs:\lamone-0.65,0) {$\lambda_{c_1}$};
  \node[above right] at (axis cs:\lamtwo-0.4,0) {$\lambda^{\ast}$};
  \node[above left ] at (axis cs:\lamthr,0) {$\lambda_{c_2}$};

  \path (axis cs:\lamone,0) coordinate (Z1);
  \path (axis cs:\lamtwo,0) coordinate (Z2);
  \path (axis cs:\lamthr,0) coordinate (Z3);
  \path (axis cs:0,0)      coordinate (O);     

  \node[draw=green!60!black, rounded corners, line width=1pt, fill=white, font=\scriptsize, anchor=east]
    (sc1box) at (axis cs:-3.8,-0.255) {$\sigma_{\mathrm{opt}}^2=s_{c_1}\approx 0.17$};
  \node[draw=blue!70, rounded corners, line width=1pt, fill=white, font=\scriptsize, anchor=east]
    (varbox) at (axis cs:2.2,-0.255) {$\mathrm{Var}\approx 0.059$};
  \node[draw=blue!70, rounded corners, line width=1pt, fill=white, font=\scriptsize, anchor=west]
    (sc2box) at (axis cs:7.4,-0.255) {$s_{c_2}\approx 0.11$};

  \draw[green!60!black, densely dashed, line width=.8pt, shorten <=1mm]
    (Z1) -- (sc1box.north);   
  \draw[blue!70, densely dashed, line width=.8pt, shorten <=1mm]
    (O)  -- (varbox.north);   
  \draw[blue!70, densely dashed, line width=.8pt, shorten <=1mm]
    (Z3) -- (sc2box.north);   
\end{axis}

\begin{axis}[
  name=inset1, at={(main.north west)}, xshift=8mm, yshift=0mm, anchor=north west,
  width=5.2cm, height=3.8cm,
  xmin=\lamone-0.45, xmax=\lamone+0.45,
  ymin=\gA,              ymax=\gA+0.03,
  domain=\lamone-0.45:\lamone+0.45, samples=600,
  axis lines=box, axis line style={draw=black},
  axis background/.style={fill=white},
  scaled x ticks=false, xtick distance=0.3,
  xticklabel style={/pgf/number format/fixed,/pgf/number format/fixed zerofill,/pgf/number format/precision=1},
  ytick=\empty, yminorticks=false, minor tick num=0,
]
  \addplot[green!60!black, line width=1pt, smooth] ({x},{gfun(x,\scone)});
  \addplot[blue, densely dashed, line width=.9pt]
    coordinates {(\lamone,\gA) (\lamone,\gA+0.03)};
\end{axis}

\begin{axis}[
  name=inset2, at={(main.north)}, yshift=0mm, xshift=3mm, anchor=north,
  width=5.2cm, height=3.8cm,
  xmin=\lamtwo-0.45, xmax=\lamtwo+0.45,
  ymin=\gB-0.01,    ymax=\gB,
  domain=\lamtwo-0.45:\lamtwo+0.45, samples=600,
  axis lines=box, axis line style={draw=black},
  axis background/.style={fill=white},
  scaled x ticks=false, xtick distance=0.3,
  xticklabel style={/pgf/number format/fixed,/pgf/number format/fixed zerofill,/pgf/number format/precision=1},
  ytick=\empty, yminorticks=false, minor tick num=0,
]
  \addplot[red!80!black, line width=1pt, smooth] ({x},{gfun(x,\sctwo)});
  \addplot[blue, densely dashed, line width=.9pt]
    coordinates {(\lamtwo,\gB-0.02) (\lamtwo,\gB+0.02)};
\end{axis}

\begin{axis}[
  name=inset3, at={(main.north east)}, xshift=+16mm, yshift=0mm, anchor=north east,
  width=5.2cm, height=3.8cm,
  xmin=\lamthr-0.45, xmax=\lamthr+0.45,
  ymin=\gC, ymax=\gC+0.03,
  domain=\lamthr-0.45:\lamthr+0.45, samples=600,
  axis lines=box, axis line style={draw=black},
  axis background/.style={fill=white},
  scaled x ticks=false, xtick distance=0.3,
  xticklabel style={/pgf/number format/fixed,/pgf/number format/fixed zerofill,/pgf/number format/precision=1},
  ytick=\empty, yminorticks=false, minor tick num=0,
]
  \addplot[green!60!black, line width=1pt, smooth] ({x},{gfun(x,\scthr)});
  \addplot[blue, densely dashed, line width=.9pt]
    coordinates {(\lamthr,\gC-0.03) (\lamthr,\gC+0.03)};
\end{axis}

\draw[green!60!black, line width=1pt, -{Stealth[length=2.4mm]}, shorten >=1mm]
  (Z1) -- ([yshift=-3mm]inset1.south); 
\draw[red!60!black, line width=1pt, -{Stealth[length=2.4mm]}, shorten >=1mm]
  (Z2) -- ([yshift=-3mm]inset2.south); 
\draw[green!60!black, line width=1pt, -{Stealth[length=2.4mm]}, shorten >=1mm]
  (Z3) -- ([yshift=-2mm]inset3.south);

\end{tikzpicture}
  }
  \caption{Illustration of \Cref{GeneralCharac} in the case of an asymmetric $3$-mass distribution $Y$ with parameters $p_1=0.05$ and $p_2=0.01$ (see~\Cref{subsec;asymmthreepoints}). The black curve represents function $\lambda\mapsto \lambda M_Y'(\lambda)-2M_Y(\lambda)$ of Equation~\eqref{eq:sys-eq}. The green/red box plots represent the local behavior of $\lambda\mapsto g_Y\left(\lambda;\sigma^2=\frac{M_Y'(\lambda^*)}{\lambda^*}\right)$ at each zero $\lambda^*$ of the black curve to decide if $\lambda^*$ is a local minimum of $g_Y$ (in green) or not (in red). In this example, $\mathcal{L}_c^*=\{\lambda_{c_1}\approx -5.41,\lambda_{c_2}\approx 9.09\}$ yielding $\mathcal{S}_c^*=\{s_{c_1}\approx 0.17$ and $s_{c_2}\approx 0.11\}$ while $\operatorname{Var}[Y]\approx 0.059$. Optimal variance proxy is thus $\sigma_{\mathrm{opt}}^2=s_{c_1}\approx 0.17$. 
  }
  \label{fig:/figures/ImageTheorem2}
\end{figure}

Our first main theoretical result is the following theorem that uses the sets $\mathcal{S}_c^*$ to characterize the optimal variance proxy, as illustrated in \Cref{fig:/figures/ImageTheorem2}.

\begin{theorem}[Characterization of the optimal variance proxy.]\label{GeneralCharac} Assume that $M_Y$ is a smooth function and that $M_Y(\lambda)\overset{\lambda\to \pm\infty}{=}o(\lambda^2)$. Then, the optimal variance proxy is characterized by
\beqq \sigma_{\mathrm{opt}}^2 = \max\{ \operatorname{Var}[Y],\; \sup\mathcal{S}_c^* \}. \eeqq   
\end{theorem}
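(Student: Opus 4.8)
The plan is to collapse the whole statement to an elementary extremal problem for one explicit auxiliary function. I would introduce
\[
r(\lambda):=\frac{2M_Y(\lambda)}{\lambda^2}\ \ (\lambda\neq0),\qquad r(0):=\operatorname{Var}[Y],
\]
and first record its basic properties: since $M_Y(0)=M_Y'(0)=0$ and $M_Y$ is smooth, Taylor's theorem shows $r$ extends to a continuous (indeed smooth) function on $\mathbb{R}$ with $r(0)=M_Y''(0)=\operatorname{Var}[Y]$; the hypothesis $M_Y(\lambda)=o(\lambda^2)$ gives $r(\lambda)\to0$ as $|\lambda|\to\infty$; and $r\geq0$ by Jensen's inequality. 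The central observation is then just a rewriting of the definition: $Y$ is $\sigma^2$-sub-Gaussian iff $g_Y(\lambda;\sigma^2)\geq0$ for all $\lambda$, i.e.\ iff $\tfrac12\lambda^2\sigma^2\geq M_Y(\lambda)$ for all $\lambda$, i.e.\ (dividing by $\lambda^2>0$) iff $\sigma^2\geq r(\lambda)$ for all $\lambda\neq0$. Hence $\sigma^2_{\mathrm{opt}}(Y)=\sup_{\lambda\neq0}r(\lambda)=\sup_{\lambda\in\mathbb{R}}r(\lambda)$, the last equality by continuity of $r$ at $0$; in particular $\sigma^2_{\mathrm{opt}}\geq r(0)=\operatorname{Var}[Y]$, which already recovers the variance lower bound.

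Next I would locate the maximiser. Because $r$ is continuous on $\mathbb{R}$ and vanishes at $\pm\infty$, the value $\sigma^2_{\mathrm{opt}}=\sup_{\mathbb{R}}r$ is attained at some $\lambda_0$. If $\lambda_0=0$ then $\sigma^2_{\mathrm{opt}}=\operatorname{Var}[Y]$ and we are done. If $\lambda_0\neq0$, it is an interior local maximum, so $r'(\lambda_0)=0$; a one-line computation gives $r'(\lambda)=\frac{2\left(\lambda M_Y'(\lambda)-2M_Y(\lambda)\right)}{\lambda^3}$, so $\lambda_0$ is a non-zero solution of the equation $\lambda M_Y'(\lambda)-2M_Y(\lambda)=0$ from \eqref{eq:sys-eq}, and substituting this relation back yields $r(\lambda_0)=M_Y'(\lambda_0)/\lambda_0=:\sigma_c^2$. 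It remains to check that $\lambda_0\in\mathcal L_c^*$, i.e.\ that $\lambda_0$ is a local minimum of $g_Y(\cdot;\sigma_c^2)$. Here I would use the factorisation $g_Y(\lambda;\sigma_c^2)=\tfrac{\lambda^2}{2}\left(\sigma_c^2-r(\lambda)\right)=\tfrac{\lambda^2}{2}\left(r(\lambda_0)-r(\lambda)\right)$: near $\lambda_0\neq0$ the prefactor $\lambda^2/2$ is positive and $g_Y(\lambda_0;\sigma_c^2)=0$, so $g_Y(\cdot;\sigma_c^2)$ has a local minimum at $\lambda_0$ exactly when $r$ has a local maximum there — which is automatic since $\lambda_0$ is a global maximiser of $r$. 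Hence $\lambda_0\in\mathcal L_c^*$ and $\sigma^2_{\mathrm{opt}}=r(\lambda_0)\in\mathcal S_c^*$. In both cases this gives $\sigma^2_{\mathrm{opt}}\leq\max\{\operatorname{Var}[Y],\sup\mathcal S_c^*\}$ (with the convention $\sup\emptyset=-\infty$ when $\mathcal S_c^*$ is empty).

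For the reverse inequality I would argue directly: $\operatorname{Var}[Y]=r(0)\leq\sup_{\mathbb{R}}r=\sigma^2_{\mathrm{opt}}$, and for any $s_c\in\mathcal S_c^*$, writing $s_c=M_Y'(\lambda_c)/\lambda_c$ with $\lambda_c\in\mathcal L_c^*$, the defining relation $\lambda_c M_Y'(\lambda_c)-2M_Y(\lambda_c)=0$ gives $s_c=r(\lambda_c)\leq\sigma^2_{\mathrm{opt}}$; hence $\max\{\operatorname{Var}[Y],\sup\mathcal S_c^*\}\leq\sigma^2_{\mathrm{opt}}$, and combining the two bounds yields the claimed identity. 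The only genuinely delicate point is the equivalence used above between "$\lambda_0$ is a local minimum of $g_Y(\cdot;\sigma_c^2)$" and "$\lambda_0$ is a local maximum of $r$"; the pleasant feature is that it needs no non-degeneracy assumption, following from the displayed factorisation together with $g_Y(\lambda_0;\sigma_c^2)=0$. One could instead run everything inside the $g_Y$-framework, invoking the coercivity from Proposition~\ref{AsymptInf} to obtain a global minimiser of $g_Y(\cdot;\sigma^2)$ and tracking how its sign evolves as $\sigma^2$ varies, but the reduction to the single function $r$ keeps the bookkeeping minimal.
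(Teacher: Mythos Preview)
Your proof is correct and takes a genuinely different, more elementary route than the paper. The paper argues directly in the $g_Y$-framework: for $\sigma^2>\max\{\operatorname{Var}[Y],\sup\mathcal S_c^*\}$ it tracks an arbitrary local minimum $\lambda_m(\sigma)$ of $g_Y(\cdot;\sigma^2)$ and uses the envelope-type identity $\partial_\sigma[g_Y(\lambda_m(\sigma);\sigma^2)]=\sigma\lambda_m(\sigma)^2>0$ together with a case analysis (including a Rolle-type argument to rule out $\lambda_m(\sigma)\to0$) to show the minimum value cannot be negative; the converse is by contradiction. Your reduction to the single continuous function $r(\lambda)=2M_Y(\lambda)/\lambda^2$, which vanishes at infinity and therefore attains its supremum, bypasses all of this tracking: $\sigma_{\mathrm{opt}}^2=\sup r$ is immediate, and the factorisation $g_Y(\lambda;\sigma_c^2)=\tfrac{\lambda^2}{2}(r(\lambda_0)-r(\lambda))$ turns the ``local minimum'' condition in $\mathcal L_c^*$ into the tautological fact that a global maximiser of $r$ is a local maximiser. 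What the paper's approach buys is that it lives entirely in the $g_Y$-picture that drives the later applications and the numerical algorithm; what your approach buys is a shorter argument with no implicit-function or monotonicity bookkeeping, and a transparent identification of every element of $\mathcal S_c^*$ as a value $r(\lambda_c)$, which makes the reverse inequality a one-liner.
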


\begin{remark}The main advantage of the characterization of the optimal variance proxy by \Cref{GeneralCharac} is that it provides a numerical way to obtain the optimal variance proxy in practice. Indeed, in order to determine it, one can numerically solve for the equation $\lambda M_Y'(\lambda)-2M_Y(\lambda)=0$. When numerical solutions are found, one should check numerically if $\lambda$ is a local minimum of $g_Y\left(.;\sigma^2=M_Y'(\lambda)/\lambda\right)$ (a sufficient condition being that $g_Y''\left(\lambda;\sigma^2=M_Y'(\lambda)/\lambda\right)>0$). Collecting all solutions, one then selects the optimal variance proxy by looking at the maximal values of $M_Y'(\lambda)/\lambda$ that can easily be computed numerically. In practice, such an algorithm is particularly efficient if the number of solutions of $\lambda M_Y'(\lambda)-2M_Y(\lambda)=0$ is low, and if one can bound the intervals on which to look for numerical solutions.  
\end{remark}

\begin{remark}
In practice, for a given family of distributions, one should study the elements of $\mathcal{L}_c^*$. If $\mathcal{L}_c^*$ is non-empty, one should compare its elements with the corresponding values of $s_c$ to select the optimal variance proxy. This strategy is particularly efficient when $\mathcal{L}_c^*$ contains very few elements or when these elements can be explicitly expressed in closed-forms.
\end{remark}

The main numerical and theoretical difficulty to study elements of $\mathcal{L}_c^*$ is the fact that they must be local minima of $g_Y(.;\sigma^2)$. This property is tricky to verify because the second derivative may also vanish at these points making the analysis complicated. However, when $g_Y''(.;\sigma^2)$ has very few zeros, it is possible to study its sign and thus remove this complication.  

\begin{proposition}[Case when $g_Y''$ has at most one zero on a half-line.]\label{Prop01Zeros} Assume that $M_Y$ is a smooth function and that $M_Y(\lambda)\overset{\lambda\to \pm\infty}{=}o(\lambda^2)$. Assume that $g_Y''(.;\sigma^2)$ has no or one zero on $\mathbb{R}_+^*$ for some $\sigma^2\geq \operatorname{Var}[Y]$, then $g_Y(.;\sigma^2)$ is positive on $\mathbb{R}_+^*$. A similar result holds for $\mathbb{R}_-^*$.
\end{proposition}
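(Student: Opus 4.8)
The plan is to turn the statement into a sign analysis of $g_Y''(\cdot;\sigma^2)$ on $\mathbb{R}_+^*$ and then integrate twice from the origin. I would first record the behaviour at the endpoints. At $\lambda=0$ we already have $g_Y(0;\sigma^2)=g_Y'(0;\sigma^2)=0$, and in addition $g_Y''(0;\sigma^2)=\sigma^2-M_Y''(0)=\sigma^2-\operatorname{Var}[Y]\ge 0$ by hypothesis. At infinity, \Cref{AsymptInf} gives $g_Y''(\lambda;\sigma^2)\to\sigma^2>0$ as $\lambda\to+\infty$, so $g_Y''(\cdot;\sigma^2)$ is strictly positive on a neighbourhood of $+\infty$.

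The core step is to upgrade this to $g_Y''(\cdot;\sigma^2)\ge 0$ on the whole of $\mathbb{R}_+^*$. Since $g_Y''(\cdot;\sigma^2)$ is continuous with $g_Y''(0;\sigma^2)\ge 0$ and $g_Y''(\lambda;\sigma^2)\to\sigma^2>0$, any point $\lambda_0\in\mathbb{R}_+^*$ with $g_Y''(\lambda_0;\sigma^2)<0$ would force, by the intermediate value theorem, a sign change of $g_Y''$ from $+$ to $-$ on $(0,\lambda_0)$ and another from $-$ to $+$ on $(\lambda_0,+\infty)$, hence at least two zeros on $\mathbb{R}_+^*$, contradicting the hypothesis; the only borderline possibility is that $g_Y''$ leaves the nonnegative region exactly at the origin, which forces $g_Y''(0;\sigma^2)=0$, i.e.\ $\sigma^2=\operatorname{Var}[Y]$, and is excluded by combining $\sigma^2\ge\operatorname{Var}[Y]$ with the at-most-one-zero assumption. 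Therefore $g_Y''(\cdot;\sigma^2)\ge 0$ on $\mathbb{R}_+^*$, vanishing at no more than one point there.

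Granting $g_Y''(\cdot;\sigma^2)\ge 0$ on $\mathbb{R}_+^*$, the conclusion follows by two integrations. For $\lambda>0$ we have $g_Y'(\lambda;\sigma^2)=\int_0^\lambda g_Y''(t;\sigma^2)\,dt$; the integrand is nonnegative and vanishes at no more than one point, hence has positive integral, so $g_Y'(\lambda;\sigma^2)>0$. Integrating once more, $g_Y(\lambda;\sigma^2)=\int_0^\lambda g_Y'(t;\sigma^2)\,dt>0$ for every $\lambda>0$. The statement on $\mathbb{R}_-^*$ is the mirror image: \Cref{AsymptInf} gives $g_Y''(\lambda;\sigma^2)\to\sigma^2>0$ as $\lambda\to-\infty$, the same zero-counting yields $g_Y''(\cdot;\sigma^2)\ge 0$ on $\mathbb{R}_-^*$, and integrating from $0$ downwards gives $g_Y'<0$ and then $g_Y>0$ on $\mathbb{R}_-^*$.

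The step I expect to be the main obstacle is the sign analysis of $g_Y''$ near the origin inside the core step: away from $0$ the zero count immediately determines the sign pattern, but one must carefully rule out a negative excursion of $g_Y''$ anchored at $\lambda=0$ itself — the configuration $g_Y''(0;\sigma^2)=0$ (equivalently $\sigma^2=\operatorname{Var}[Y]$) together with a single sign change on $\mathbb{R}_+^*$ — which is exactly where the assumption $\sigma^2\ge\operatorname{Var}[Y]$ enters in an essential way. The two integrations and the reflection argument on $\mathbb{R}_-^*$ are then routine.
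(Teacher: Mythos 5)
Your route is the same as the paper's: force $g_Y''(\cdot;\sigma^2)\ge 0$ on the half-line from the zero count together with $g_Y''(0;\sigma^2)=\sigma^2-\operatorname{Var}[Y]\ge 0$ and $g_Y''(\lambda;\sigma^2)\to\sigma^2>0$, then integrate twice from $0$ using $g_Y(0;\sigma^2)=g_Y'(0;\sigma^2)=0$; the two integrations and the reflection to $\mathbb{R}_-^*$ are fine, and whenever $\sigma^2>\operatorname{Var}[Y]$ (so $g_Y''(0;\sigma^2)>0$) your two-sign-change count is airtight. This is essentially the paper's argument, which likewise splits into ``no zero'' and ``one zero, hence no sign change'' and concludes by monotonicity.

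The gap sits exactly at the step you flagged, and your resolution of it is an assertion rather than a proof. The configuration $\sigma^2=\operatorname{Var}[Y]$, $g_Y''(0;\sigma^2)=0$, $g_Y''<0$ on $(0,\lambda_1)$ and $g_Y''>0$ on $(\lambda_1,+\infty)$ has exactly one zero on the open half-line $\mathbb{R}_+^*$ and satisfies $\sigma^2\ge\operatorname{Var}[Y]$, so it is \emph{not} ``excluded by combining $\sigma^2\ge\operatorname{Var}[Y]$ with the at-most-one-zero assumption''. Nor is it vacuous: for a centered Bernoulli($p$) with $p<\tfrac12$ and $\sigma^2=p(1-p)=\operatorname{Var}[Y]$, one has $g_Y''(\lambda;\sigma^2)=p(1-p)-p_\lambda(1-p_\lambda)$ with $p_\lambda=pe^{\lambda}/(1-p+pe^{\lambda})$, whose only zero on $\mathbb{R}_+^*$ is $\lambda_1=2\ln\!\big((1-p)/p\big)$, negative before and positive after; yet $g_Y(\lambda;\sigma^2)=-\tfrac{1}{6}\kappa_3\lambda^3+O(\lambda^4)<0$ near $0^+$ since $\kappa_3=p(1-p)(1-2p)>0$. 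So the borderline case cannot be dismissed on logical grounds from the stated hypotheses: the conclusion itself requires either $\sigma^2>\operatorname{Var}[Y]$ or that the unique zero is not a sign change (which is how the proposition is actually used in the paper, e.g.\ for the uniform distribution where $g_Y''$ has no zero on $\mathbb{R}_\pm^*$). Note that the paper's own proof glosses the same point — it asserts the unique zero ``cannot change sign'' on the strength of $g_Y''(0;\sigma^2)\ge 0$, which is only conclusive when that inequality is strict — but your write-up goes further by claiming the case is ruled out, which is false; to repair your proof you should either add the strictness assumption or treat the sign-changing zero explicitly and exclude it by hypothesis rather than by appeal to $\sigma^2\ge\operatorname{Var}[Y]$.
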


Unfortunately the situation becomes more involved when $g_Y''(.;\sigma^2)$ has more than one zero on a half-line. However, we can still get information when $g_Y''(.;\sigma^2)$ has exactly two zeros on a half-line. Two cases are studied in \Cref{LemmaNumber} and \Cref{LemmaNumber2} provided in \appendixref{appendix:tech-lemmas}, which can be put together to obtain the following proposition.

\begin{proposition}[Case when $g_Y''$ has exactly two zeros on a half-line.]\label{LemmaMerged}Assume that $M_Y$ is a smooth function and that $M_Y(\lambda)\overset{\lambda\to \pm\infty}{=}o(\lambda^2)$. Also assume that for any $\sigma^2>0$, $g_Y''(.;\sigma^2)$ has exactly two zeros $(\lambda_1(\sigma),\lambda_2(\sigma))$ such that $0<\lambda_1(\sigma)<\lambda_2(\sigma)$. Then, with a similar result on $\mathbb{R}_-$:
\begin{itemize}
    \item The equations $g_Y(\lambda,\sigma^2)=0=g_Y'(\lambda,\sigma^2)$ have at most one solution on $\mathbb{R}_+^*\times  \mathbb{R}_+^*$. When it exists, this unique solution $(\lambda_0,\sigma_c^2)$ always satisfies $\sigma_c^2>\operatorname{Var}[Y]$.
    \item $g_Y(.;\sigma^2)$ may only become negative on $\mathbb{R}_+$ if and only if the previous set of equations has exactly one solution $(\lambda_0,\sigma_c^2)$  and $\sigma^2< \sigma_c^2$.
\end{itemize}
\end{proposition}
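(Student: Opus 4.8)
The plan is to turn both assertions into a one-dimensional sign analysis of $g_Y''(\cdot;\sigma^2)=\sigma^2-M_Y''(\cdot)$ on $\mathbb{R}_+$, and then to exploit the ``envelope identity'' linking the value of $g_Y$ at its critical points to the function $h(\lambda):=\lambda M_Y'(\lambda)-2M_Y(\lambda)$ from \eqref{eq:sys-eq}. Fix $\sigma^2>0$ and let $0<\lambda_1(\sigma)<\lambda_2(\sigma)$ be the two zeros of $g_Y''(\cdot;\sigma^2)$. By \Cref{AsymptInf} we have $g_Y''(\lambda;\sigma^2)\to\sigma^2>0$ and $g_Y'(\lambda;\sigma^2)\to+\infty$ as $\lambda\to+\infty$, while $g_Y(0;\sigma^2)=g_Y'(0;\sigma^2)=0$ and $g_Y''(0;\sigma^2)=\sigma^2-\operatorname{Var}[Y]$; these data pin down the sign of $g_Y''(\cdot;\sigma^2)$ (the degenerate case where it merely touches $0$ without changing sign is absorbed into \Cref{Prop01Zeros} applied to a subinterval), hence the monotonicity of $g_Y'(\cdot;\sigma^2)$, hence the shape of $g_Y(\cdot;\sigma^2)$. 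The outcome is that $g_Y'(\cdot;\sigma^2)$ has at most one zero $\lambda_{\ast\ast}(\sigma)\in\mathbb{R}_+^*$ at which it passes from negative to positive; when it exists, $\lambda_{\ast\ast}(\sigma)$ lies beyond $\lambda_2(\sigma)$, so $g_Y''(\lambda_{\ast\ast}(\sigma);\sigma^2)>0$, it is a strict local minimum of $g_Y(\cdot;\sigma^2)$, and the only competing local minimum on $[0,\infty)$ is $\lambda=0$ with value $0$. Thus $g_Y(\cdot;\sigma^2)\geq 0$ on $\mathbb{R}_+$ if and only if $\lambda_{\ast\ast}(\sigma)$ does not exist or $g_Y(\lambda_{\ast\ast}(\sigma);\sigma^2)\geq 0$; moreover any solution $(\lambda_0,\sigma^2)$ of $g_Y=0=g_Y'$ with $\lambda_0>0$ must satisfy $\lambda_0=\lambda_{\ast\ast}(\sigma)$, since at the other candidate critical point $g_Y$ is strictly positive.

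Next I would substitute $\sigma^2=M_Y'(\lambda)/\lambda$ into the definition of $g_Y$ to obtain the identity $g_Y(\lambda;\sigma^2)=\tfrac12\lambda M_Y'(\lambda)-M_Y(\lambda)=\tfrac12 h(\lambda)$ at every $\lambda\neq 0$ with $g_Y'(\lambda;\sigma^2)=0$. Along the curve $\sigma\mapsto(\lambda_{\ast\ast}(\sigma),\sigma^2)$ --- which is $C^1$ by the implicit function theorem applied to $g_Y'=0$ at a point where $g_Y''>0$ --- the envelope theorem then gives $\tfrac{d}{d\sigma^2}\,g_Y(\lambda_{\ast\ast}(\sigma);\sigma^2)=\partial_{\sigma^2}g_Y(\lambda_{\ast\ast}(\sigma);\sigma^2)=\tfrac12\,\lambda_{\ast\ast}(\sigma)^2>0$, the $\partial_\lambda g_Y$ term vanishing because $\lambda_{\ast\ast}(\sigma)$ is a critical point. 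Hence $\sigma^2\mapsto g_Y(\lambda_{\ast\ast}(\sigma);\sigma^2)$ is strictly increasing, so it vanishes for at most one value of $\sigma^2$, which we call $\sigma_c^2$ when it exists; by \eqref{eq:sys-eq} this says exactly that $g_Y=0=g_Y'$ has at most one solution $(\lambda_0,\sigma_c^2)$ on $\mathbb{R}_+^*\times\mathbb{R}_+^*$. Together with the equivalence of the first paragraph, $g_Y(\cdot;\sigma^2)$ is nonnegative on $\mathbb{R}_+$ precisely for $\sigma^2\geq\sigma_c^2$, and for every $\sigma^2>0$ when no such $\sigma_c^2$ exists; this yields both directions of the second bullet.

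For the strict inequality $\sigma_c^2>\operatorname{Var}[Y]$, write $v=\operatorname{Var}[Y]$ and use $g_Y(\lambda;\sigma^2)=g_Y(\lambda;v)+\tfrac12\lambda^2(\sigma^2-v)$. If $(\lambda_0,v)$ were a solution then $g_Y(\lambda_0;v)=g_Y'(\lambda_0;v)=0$, so $\lambda_0=\lambda_{\ast\ast}(v)$ would be a global minimiser of $g_Y(\cdot;v)$ on $\mathbb{R}_+$ with value $0$, at a point where $g_Y''(0;v)=0$. I would then use the \emph{uniform} form of the two-zeros hypothesis --- in particular that $g_Y''(\cdot;\sigma^2)$ still has exactly two positive zeros for $\sigma^2$ slightly below $v$, where $g_Y''(0;\sigma^2)=\sigma^2-v<0$ --- together with the Taylor data at $\lambda=0$ (namely $g_Y''(0;v)=0$ and $g_Y'''(0;v)=-M_Y'''(0)$) to rule this limiting configuration out, forcing $g_Y(\lambda_{\ast\ast}(v);v)<0$ and hence $\sigma_c^2>v$. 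The two technical lemmas \Cref{LemmaNumber} and \Cref{LemmaNumber2} are exactly the regimes $g_Y''(0;\sigma^2)>0$ and $g_Y''(0;\sigma^2)\leq 0$ of the analysis above; gluing them along $\sigma^2=v$ and adding the strictness gives the proposition, and the statement on $\mathbb{R}_-$ follows by applying everything to $-Y$.

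The hardest part is the strict inequality $\sigma_c^2>\operatorname{Var}[Y]$: this is the one place where the uniform (``for every $\sigma^2$'') strength of the hypothesis is genuinely used, and making it airtight requires carefully excluding the borderline configurations near $\sigma^2=\operatorname{Var}[Y]$ --- $g_Y''$ touching $0$ without a sign change, $\lambda_{\ast\ast}(\sigma)$ failing to exist, or parity obstructions to ``exactly two zeros'' --- which is presumably why the proof is organised through the two auxiliary lemmas. A subsidiary but necessary technical point is the regularity of $\sigma\mapsto\lambda_{\ast\ast}(\sigma)$ and the verification that it stays in $(\lambda_2(\sigma),+\infty)$ over the whole relevant range of $\sigma^2$, which is what legitimises the envelope computation driving the uniqueness and the sign dichotomy.
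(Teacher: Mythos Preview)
Your proposal is essentially the paper's approach: the two auxiliary lemmas are exactly the regimes $\sigma^2>\operatorname{Var}[Y]$ and $\sigma^2<\operatorname{Var}[Y]$ you describe, the shape analysis via the sign pattern of $g_Y''$ is the same, and the monotonicity of the local-minimum value via $\partial_\sigma\big[g_Y(\lambda_m(\sigma);\sigma^2)\big]=\sigma\,\lambda_m(\sigma)^2>0$ is the same envelope computation the paper uses.

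One simplification worth noting: the paper's route to ``no solution with $\sigma^2<\operatorname{Var}[Y]$'' (hence $\sigma_c^2\geq\operatorname{Var}[Y]$) is more direct than your Taylor/borderline sketch. For any $\sigma^2<\operatorname{Var}[Y]$ one has $g_Y''(0;\sigma^2)<0$, so $g_Y$ is locally concave at $0$ with $g_Y(0)=g_Y'(0)=0$, hence strictly negative on a right neighborhood of $0$; the unique interior minimum of $g_Y(\cdot;\sigma^2)$ on $\mathbb{R}_+^*$ is therefore strictly negative, and $g_Y=0=g_Y'$ has no solution there. That is Lemma~\ref{LemmaNumber2} in full, and it avoids any analysis of $M_Y'''(0)$ or limiting configurations at $\sigma^2=\operatorname{Var}[Y]$. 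A small caveat on your writeup: the location claim $\lambda_{\ast\ast}(\sigma)>\lambda_2(\sigma)$ is only guaranteed in the regime $\sigma^2>\operatorname{Var}[Y]$ (sign pattern $+,-,+$ for $g_Y''$); when $\sigma^2<\operatorname{Var}[Y]$ one of the two zeros can be a tangency and $\lambda_{\ast\ast}$ may sit before $\lambda_2$, but this is harmless since the only fact you actually use downstream is that $\lambda_{\ast\ast}$ is a local minimum of $g_Y$.
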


\Cref{Prop01Zeros} and \Cref{LemmaMerged} are interesting to obtain a characterization of the optimal variance proxy when $g_Y''(.;\sigma^2)$ has at most two zeros on each half-lines $\mathbb{R}_\pm^*$. Indeed, in this case the optimal proxy variance is obtained as the maximum of $\operatorname{Var}[Y]$, $\sigma_{c,+}^2$ and $\sigma_{c,-}^2$ where $\sigma_{c,+}^2$ (resp. $\sigma_{c,-}^2$) is the unique solution, when it exists, of the equations $g_Y(\lambda,\sigma^2)=0=g_Y'(\lambda,\sigma^2)$ on $\mathbb{R}_+^*$ (resp. $\mathbb{R}_-^*$). As we shall see, this situation happens for the $3$-mass distributions. It also includes many other standard distributions.

\section{Application to 3-mass distributions}\label{sec:3-mass}

In this section, we undertake a detailed analysis of the sub-Gaussian properties of three-mass discrete distributions supported on $\{-a,0,a\}$, see~\Cref{fig:distrib-plots}. Our objective is to characterize the optimal variance proxy $\sigma^2_{\mathrm{opt}}$ and to delineate the regimes in which strict sub-Gaussianity holds.  

In the \textit{symmetric case} (\Cref{subsec;symmthreepoints}), where the outer masses are equally weighted, we establish two distinct behaviors. When the parameter satisfies $p \geq \tfrac{1}{6}$, the distribution is strictly sub-Gaussian and the optimal variance proxy coincides with the variance, $\sigma^2_{\mathrm{opt}} = \mathrm{Var}[X] = 2p$. 
In contrast, for $p < \tfrac{1}{6}$, strict sub-Gaussianity fails, and the optimal variance proxy is determined by a critical parameter $\lambda_c > 0$ solving a coupled system of equations (\Cref{TheoSymmetric3mass}).  

In the \textit{asymmetric case}  (\Cref{subsec;asymmthreepoints}), where the mass probabilities at $-a$ and $a$ are not equal, the situation is more intricate. If the central mass satisfies $p_3 \leq 4\sqrt{p_1p_2}$, then an explicit closed-form expression is available (\Cref{TheoRegime1} and \Cref{Strictsub-GaussianityRegime1}), $\sigma^2_{\mathrm{opt}} = {2(p_2 - p_1)}/{\ln\!\left({p_2}/{p_1}\right)}$.
When $p_3 > 4\sqrt{p_1p_2}$, the characterization of $\sigma^2_{\mathrm{opt}}$ requires the analysis of a nonlinear equation whose solution yields the critical value determining the transition between variance proxies (\Cref{TheoRegime2}).  
 
\subsection{Symmetric 3-mass distribution}\label{subsec;symmthreepoints}

Let $X$ be a discrete random variable on the set $\{-a,0,a\}$, $a>0$ and $    \P(X=-a)=p\,,\, \P(X=0)=1-2p\,,\, \P(X=a)=p$, 
where $p\in \left( 0,\frac{1}{2}\right)$, see \Cref{fig:symmetric}.
We may define $Y=\frac{1}{a}X$ and use the fact that $\sigma_{\mathrm{opt}}[Y]=\frac{1}{a}\, \sigma_{\mathrm{opt}}[X]$. Thus, we may restrict to $a=1$. We have $\mu:=\E[Y]=0\,\,,\,\, \sigma^2:=\operatorname{Var}[Y]=2p$. In the special case where $p=\frac{1}{2}$ the random variable $X$ reduces to the symmetric Rademacher distribution, which is known to be strictly sub-Gaussian. 
For any $\sigma>0$, we have that $\sigma^2$ is a variance proxy of $Y$ if and only if
\beqq \E\big[e^{\lambda (Y-\mu)}\big]=p e^{\lambda }+ pe^{-\lambda } +1-2p\leq e^{\frac{\lambda^2\sigma^2}{2}} \,\, ,\,\, \forall \, \lambda\in \mathbb{R} .\eeqq
This inequality is equivalent to
\[
g_{\sigma,p}(\lambda):=\frac{\lambda^2\sigma^2}{2}-\ln( 2p\cosh \lambda +1-2p)\geq 0 \,\,,\,\,  \forall \, \lambda\in \mathbb{R} .
\]

Since $g_{\sigma,p}$ is an even function of $\lambda$, we only need to prove the former inequality for $\lambda\geq 0$.
The general theory implies that $\operatorname{Var}[Y]=2p$ is a lower bound for the optimal variance proxy. Consequently, we shall only consider $\sigma^2\geq 2p$.
\medskip

The function $g_{\sigma,p}$ is obviously a smooth function of $(\lambda,\sigma,p)$ and we have
\beaa
    g'_{\sigma,p}(\lambda)&=&\lambda \sigma^2 -\frac{2p\sinh \lambda }{2p\cosh \lambda +1-2p}\,\,,\,\, 
g^{(2)}_{\sigma,p}(\lambda)=\sigma^2+\frac{2p\left(2p\cosh \lambda -\cosh \lambda -2p)\right)}{(2p\cosh \lambda +1-2p)^2}\cr
g^{(3)}_{\sigma,p}(\lambda)&=&\frac{2p\sinh \lambda\left(4p^2+4p-1+2p(1-2p)\cosh \lambda\right) }{(2p\cosh \lambda +1-2p)^3}.
\eeaa
Let us now observe that $\forall \, \lambda\in \mathbb{R}, \,N_{p}(\lambda):=4p^2+4p-1+2p(1-2p)\cosh \lambda \geq 1$,
and $g^{(3)}_{\sigma,p}(0)=0$. Thus, $g^{(3)}_{\sigma,p}$ and $N_{p}$ have the same sign on $\mathbb{R}_+$. Since $N_{p}'(\lambda)=2p(1-2p)\sinh \lambda$, we get that $N_{p}$ is a strictly increasing function on $\mathbb{R}_+$. Since $N_{p}(0)=6p-1$, we get two distinct cases.\medskip

\noindent\textbf{First}, $p\geq \frac{1}{6}$: In this case, $N_{p}$ is a positive function on $\mathbb{R}_+$ and so is $g^{(3)}_{\sigma,p}$. It follows that $g_{\sigma,p}^{(2)}$ is a strictly increasing function on $\mathbb{R}_+$. Since $g^{(2)}_{\sigma,p}(0)=\sigma^2-2p\geq0$, we conclude that $g^{(2)}_{\sigma,p}$ is positive  on $\mathbb{R}_+$. Thus $g'_{\sigma,p}$ is a strictly increasing function on $\mathbb{R}_+$. Furthermore, since $g_{\sigma,p}'(0)=0$ we end up with the fact that $g_{\sigma,p}'$ is a positive function on $\mathbb{R}_+$ and therefore $g_{\sigma,p}$ is an increasing function on $\mathbb{R}_+$. Finally, since $g_{\sigma,p}(0)=0$ we conclude that $g_{\sigma,p}$ is positive on $\mathbb{R}_+$ so that $\sigma$ is a variance proxy. This argument is valid for any $\sigma^2\geq \operatorname{Var}[Y]=2p$ so that $\sigma^2_{\mathrm{opt}}[Y]=\operatorname{Var}[Y]=2p$, i.e. {$Y$ is strictly sub-Gaussian}.\medskip

\noindent\textbf{Second}, $p<\frac{1}{6}$: In this case, $N_{p}(0)<0$, and $N_{p}$ is increasing on $\mathbb{R}_+$, tending to $+\infty$ when $\lambda\to +\infty$. Thus, since $N_{p}$ is a smooth function, there exists a unique $\lambda_0=\arcosh\left(\frac{1-4p-4p^2}{2p(1-2p)}\right)>0$ such that $N_{p}(\lambda_0)=0$. Moreover $N_{p}$ is strictly negative on $(0,\lambda_0)$ and strictly positive on $(\lambda_0,+\infty)$. These properties immediately extend to $g_{\sigma,p}^{(3)}$. Consequently, $g_{\sigma,p}^{(2)}$ is strictly decreasing on $(0,\lambda_0)$ and strictly increasing on $(\lambda_0,+\infty)$. In addition, we have $g_{\sigma,p}^{(2)}(0)=\sigma^2-2p\geq 0$ and $g_{\sigma,p}^{(2)}(\lambda_0)=\sigma^2-\frac{(1-2p)^2}{4(1-4p)}$ and $g_{\sigma,p}^{(2)}(+\infty)=\sigma^2>0$. Note that if $\sigma^2\geq \frac{(1-2p)^2}{4(1-4p)}$ then $g^{(2)}_{\sigma,p}$ is positive on $\mathbb{R}_+$ and then it is straightforward to prove that $\sigma$ is a variance proxy using the same final steps as the case $p\geq\frac{1}{6}$. Thus, we obtain the following upper bound for the optimal variance proxy of $Y$:
\begin{equation}\label{eq:sigma1}
        \sigma_{1}^2(p):=\frac{(1-2p)^2}{4(1-4p)}.
\end{equation}
Consequently $\sigma^2=2p$ is no longer a variance proxy because $g'_{\sigma,p}$ would become strictly decreasing on $(0,\lambda_0)$ and thus strictly negative on this interval (because $g'_{\sigma,p}(0)=0$) and so $g_{\sigma,p}$ would be negative on $(0,\lambda_0)$ (again because $g_{\sigma,p}(0)=0)$. This proves that {for $p<\frac{1}{6}$, $Y$ is not strictly sub-Gaussian}. \medskip

Let us be more precise in the case $p<\frac{1}{6}$. As mentioned above, we shall now only consider $2p<\sigma^2<\frac{(1-2p)^2}{4(1-4p)}$. The previous analysis implies that there exists a unique pair $(\lambda_1,\lambda_2)$ such that $0<\lambda_1<\lambda_0<\lambda_2$ and $g_{\sigma,p}^{(2)}(\lambda_1)=g_{\sigma,p}^{(2)}(\lambda_2)=0$. Moreover, $g_{\sigma,p}^{(2)}$ is strictly positive on $(0,\lambda_1)\cup(\lambda_2,+\infty)$ and strictly negative on $(\lambda_1,\lambda_2)$. 
Note that we have explicitly:
\begin{align*}
    \lambda_1(\sigma)&:=\arcosh\left(  \frac{(1-2p)(1-2\sigma^2)-\sqrt{(1-2p)^2-4(1-4p)\sigma^2 } }{4p\sigma^2}\right)\\
\lambda_2(\sigma)&:=\arcosh\left(  \frac{(1-2p)(1-2\sigma^2)+\sqrt{(1-2p)^2-4(1-4p)\sigma^2 } }{4p\sigma^2}\right).
\end{align*}
This implies that $g_{\sigma,p}'$ is increasing on $(0,\lambda_1)$, then decreasing on $(\lambda_1,\lambda_2)$ and finally increasing on $(\lambda_2,+\infty)$. Since $g_{\sigma,p}'(0)=0$ and $g_{\sigma,p}'(+\infty)=+\infty$, the sign of $g_{\sigma,p}'$ is determined by the sign of $g_{\sigma,p}'(\lambda_2)$. Note also that a straightforward computation implies that $g_{\sigma,p}'(\lambda_0)=\sigma^2 \lambda_0\sqrt{(1-6p)(1+2p)(1-4p)}>0$. There are only two cases that we now study.\\

\noindent\textbf{Case when $g_{\sigma,p}'(\lambda_2)\geq 0$}: then $g_{\sigma,p}'$ is positive on $\mathbb{R}_+$ so that $g_{\sigma,p}$ is positive on $\mathbb{R}_+$ and thus $\sigma$ is a variance proxy. Thus an upper bound is $\sigma^2_{2}$ that is the unique solution in $\left(2p,\frac{(1-2p)^2}{4(1-4p)}\right)$ of 
\beq \label{eq:sigma2}
\sigma^2_{2}(p)=\frac{2p\sinh\lambda_2(\sigma_{2}(p))}{\lambda_2(\sigma_{2}(p)) (1+2p\cosh \lambda_2(\sigma_{2}(p))-2p) }.\eeq

\noindent\textbf{Case when $g_{\sigma,p}'(\lambda_2)<0$}: then there exists a unique pair $(\lambda_3,\lambda_4)$ such that $\lambda_0<\lambda_3<\lambda_2<\lambda_4$ and $g_{\sigma,p}'(\lambda_3)=g_{\sigma,p}'(\lambda_4)=0$. Moreover, $g_{\sigma,p}'$ is positive on $(0,\lambda_3)\cup(\lambda_4,+\infty)$ and negative on $(\lambda_3,\lambda_4)$. This implies that $g_{\sigma,p}$ increases on $(0,\lambda_3)$, then decreases on $(\lambda_3,\lambda_4)$ and finally increases on $(\lambda_4,+\infty)$. In particular, since $g_{\sigma,p}(0)=0$ and $g_{\sigma,p}(+\infty)=+\infty$, $g_{\sigma,p}$ has only one local maximum $\lambda_3$ and one local minimum $\lambda_4$ on $(0,+\infty)$. Moreover, these local extremum satisfy $0<\lambda_3<\lambda_0<\lambda_4$ and $g_{\sigma,p}(\lambda_3)>0$. Eventually the sign of $g_{\sigma,p}(\lambda_4)$ determines if $\sigma$ is a variance proxy or not. Since $g_{\sigma,p}(\lambda_4)$ is a smooth function of $\sigma$, the critical case corresponding to $\sigma_{\mathrm{opt}}$ may happen only when $g_{\sigma_{\mathrm{opt}},p}(\lambda_4)=0$. Since we know that this critical case is achieved on $\left(2p,\frac{1-4p+4p^2}{4(1-4p)}\right)$ we conclude with the following statement.

\begin{figure}[tb]
\centering
\scalebox{.78}{
\input{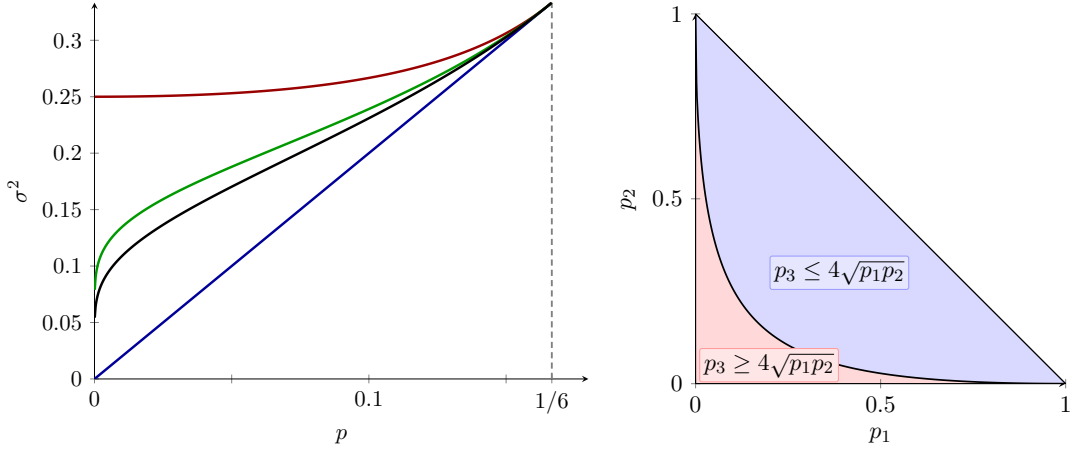}
}
\scalebox{.82}{
\begin{tikzpicture}
\begin{axis}[
  width=6cm, height=6cm,
  scale only axis,
  axis equal image,           
  axis lines=left,
  xmin=0, xmax=1,
  ymin=0, ymax=1,
  xlabel={$p_1$},
  ylabel={$p_2$},
  xtick={0,0.5,1},
  ytick={0,0.5,1},
  ticklabel style={/pgf/number format/fixed},
  legend style={
    at={(rel axis cs:0.96,0.96)}, anchor=north east,
    fill=white, draw=none, font=\small
  }
]

\path[name path=tri] (axis cs:0,0) -- (axis cs:1,0) -- (axis cs:0,1) -- cycle;

\addplot[name path=curve, domain=0:1, samples=400, smooth, thick]
  {1 + 7*x - 4*sqrt(x*(1 + 3*x))};

\addplot[name path=diag, domain=0:1, samples=2, thin] {1 - x};

\addplot[draw=none, fill=red!15, on layer=axis background] coordinates {(0,0) (1,0) (0,1)} -- cycle;

\addplot[blue!15, on layer=axis foreground] fill between[of=diag and curve];

\addplot[thin] coordinates {(0,0) (1,0)};
\addplot[thin] coordinates {(0,0) (0,1)};
\addplot[thin] coordinates {(0,1) (1,0)};

\node[anchor=west, fill=blue!10, draw=blue!40, rounded corners=1pt, inner sep=2pt]
  at (axis cs:0.20,0.3) {$p_3 \le 4\sqrt{p_1 p_2}$};

\node[anchor=west, fill=red!10, draw=red!40, rounded corners=1pt, inner sep=2pt]
  at (axis cs:0.01,0.05) {$p_3 \ge 4\sqrt{p_1 p_2}$};
\end{axis}
\end{tikzpicture}
}
\caption{Left: Symmetric 3-mass case (Section \ref{subsec;symmthreepoints}). Black: Optimal variance proxy $\sigma_{\mathrm{opt}}^{2}$ for $0<p<\frac{1}{6}$. Blue: variance $\operatorname{Var}[Y] = 2p$. Red: upper bound $\sigma_{1}^2(p)$ of \Cref{eq:sigma1}. Green: upper bound $\sigma_{2}^2(p)$ of \Cref{eq:sigma2}. Right: Asymmetric 3-mass case (Section \ref{subsec;asymmthreepoints}). Two different regimes depending on the relative weight of the intermediate mass.}
\label{fig:3-mass-illustrations}
\end{figure}

\begin{theorem}[Optimal variance proxy for symmetric 3-mass distribution.]\label{TheoSymmetric3mass}
Let $X$ be a discrete random variable on the set $\{-a,0,a\}$ where $a > 0$ \text{ and }  $\P(X=-a)=p\,,\, \P(X=0)=1-2p\,,\, \P(X=a)=p$  where $p\in \left( 0,\frac{1}{2}\right)$. Then we have two regimes:
\begin{enumerate}
\item[\textbf{(i)}] \emph{Strictly sub-Gaussian regime.}
If \(p \in \left[\tfrac16, \tfrac12\right)\), then \(X\) is strictly sub-Gaussian, i.e.,
\beqq \sigma_{\mathrm{opt}}^{2} = \operatorname{Var}[X] = 2p. \eeqq
\item[\textbf{(ii)}] \emph{Non-strictly sub-Gaussian regime.}  
If \(p \in (0,\tfrac16)\), then \(X\) is not strictly sub-Gaussian. In this case, the optimal variance proxy is characterized by the system
\beqq
\begin{cases}
0 = g_{\sigma_{\mathrm{opt}},p}(\lambda_c)
   = \tfrac{\lambda_c^2\sigma_{\mathrm{opt}}^2}{2}
     - \ln\!\big(2p\cosh \lambda_c + 1 - 2p\big),\\[1ex]
0 = g'_{\sigma_{\mathrm{opt}},p}(\lambda_c)
   = \lambda_c \sigma_{\mathrm{opt}}^2
     - \tfrac{2p\sinh \lambda_c}{2p\cosh \lambda_c + 1 - 2p},
\end{cases}
\eeqq
where $\lambda_c \in (\lambda_0, +\infty)$ is the unique solution with \beqq \lambda_0 = \arcosh\!\Bigg(\frac{1 - 4p - 4p^2}{2p(1 - 2p)}\Bigg) > 0.\eeqq
\end{enumerate}
\end{theorem}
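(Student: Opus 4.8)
The plan is to assemble the case-by-case analysis carried out in the paragraphs preceding the statement and to supplement it with one soft analytic argument that converts the informal ``critical case'' discussion into the precise coupled system. \emph{Part (i)} ($p\in[\tfrac16,\tfrac12)$) is exactly the ``First'' case treated above: for every $\sigma^2\ge 2p$ one has $g^{(3)}_{\sigma,p}\ge 0$ on $\mathbb{R}_+$, and positivity then propagates upward, $g^{(3)}_{\sigma,p}\ge 0\Rightarrow g^{(2)}_{\sigma,p}\ge 0\Rightarrow g'_{\sigma,p}\ge 0\Rightarrow g_{\sigma,p}\ge 0$, using $g^{(2)}_{\sigma,p}(0)=\sigma^2-2p\ge 0$ together with $g'_{\sigma,p}(0)=g_{\sigma,p}(0)=0$; by evenness of $g_{\sigma,p}$ this holds on all of $\mathbb{R}$, so every $\sigma^2\ge\operatorname{Var}[X]=2p$ is a variance proxy. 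Since the variance is always a lower bound, $\sigma^2_{\mathrm{opt}}=2p$.

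\emph{Part (ii), setup} ($p\in(0,\tfrac16)$). The ``Second'' case above already shows that $\sigma^2=2p$ is not a variance proxy: there $g^{(2)}_{\sigma,p}(0)=0$ and $g^{(2)}_{\sigma,p}$ strictly decreases on $(0,\lambda_0)$, hence $g^{(2)}_{\sigma,p}<0$, and using $g'_{\sigma,p}(0)=g_{\sigma,p}(0)=0$ this forces $g_{\sigma,p}<0$ on $(0,\lambda_0)$. Thus $\sigma^2_{\mathrm{opt}}>2p$ and $X$ is not strictly sub-Gaussian. The same discussion provides the sandwich $2p<\sigma^2_{\mathrm{opt}}\le\sigma_1^2(p)=\tfrac{(1-2p)^2}{4(1-4p)}$ and, for $\sigma^2$ strictly between these bounds, the shape dichotomy for $g_{\sigma,p}$ on $\mathbb{R}_+$: either $g'_{\sigma,p}(\lambda_2)\ge 0$, in which case $g_{\sigma,p}>0$ on $\mathbb{R}_+^*$ and $\sigma^2$ is a proxy; or $g'_{\sigma,p}(\lambda_2)<0$, in which case $g_{\sigma,p}$ has on $\mathbb{R}_+^*$ exactly one local maximum $\lambda_3$ with $g_{\sigma,p}(\lambda_3)>0$ and exactly one local minimum $\lambda_4$ with $\lambda_0<\lambda_2<\lambda_4$, and $\sigma^2$ is a proxy iff $g_{\sigma,p}(\lambda_4)\ge 0$.

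\emph{Part (ii), the crux.} Since $\sigma\mapsto g_{\sigma,p}(\lambda)$ is increasing and continuous for each fixed $\lambda$, the set of variance proxies is the closed half-line $[\sigma^2_{\mathrm{opt}},\infty)$. The key claim is that $g_{\sigma_{\mathrm{opt}},p}$ cannot be strictly positive on $\mathbb{R}_+^*$: near $0$ one has $g_{\sigma_{\mathrm{opt}},p}(\lambda)=\tfrac12(\sigma^2_{\mathrm{opt}}-2p)\lambda^2+O(\lambda^4)$ with positive leading coefficient; on any compact $[\varepsilon,\Lambda]$ strict positivity yields a uniform positive lower bound; and for every fixed $\sigma^2>0$, $g_{\sigma,p}(\lambda)=\tfrac12\sigma^2\lambda^2-\lambda-\ln p+o(1)\to+\infty$, uniformly for $\sigma^2$ in a compact subset of $(0,\infty)$. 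Combining these three estimates, a sufficiently small decrease of $\sigma^2$ would keep $g_{\sigma,p}\ge 0$ on $\mathbb{R}$, contradicting minimality of $\sigma^2_{\mathrm{opt}}$; the same argument applied at $\sigma^2=\sigma_1^2(p)$ gives $\sigma^2_{\mathrm{opt}}<\sigma_1^2(p)$. Hence $\sigma^2_{\mathrm{opt}}$ lies strictly inside the dichotomy range, it cannot be in the branch $g'_{\sigma_{\mathrm{opt}},p}(\lambda_2)\ge 0$, and in the branch $g'_{\sigma_{\mathrm{opt}},p}(\lambda_2)<0$ the inequality $g_{\sigma_{\mathrm{opt}},p}(\lambda_4)\ge 0$ must be an equality. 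Setting $\lambda_c:=\lambda_4>\lambda_0$, the facts that $\lambda_c$ is a local minimum of $g_{\sigma_{\mathrm{opt}},p}$ and that $g_{\sigma_{\mathrm{opt}},p}(\lambda_c)=0$ are exactly the two equations of the system. For location and uniqueness of $\lambda_c$: by \eqref{eq:sys-eq} any solution with $\lambda\neq 0$ forces $\sigma^2=M_Y'(\lambda)/\lambda$ with $\widetilde N(\lambda):=\lambda M_Y'(\lambda)-2M_Y(\lambda)=0$, where $M_Y(\lambda)=\ln(2p\cosh\lambda+1-2p)$; one checks $\widetilde N(0)=\widetilde N'(0)=0$ and $\widetilde N''(\lambda)=\lambda M_Y'''(\lambda)$, whose sign on $\mathbb{R}_+^*$ is opposite to that of $N_p(\lambda)$ by the displayed formula for $g^{(3)}_{\sigma,p}$; hence $\widetilde N'$ increases on $(0,\lambda_0)$, then decreases towards the limit $-1$, and has a single zero, so $\widetilde N$ increases then decreases with $\widetilde N(\lambda)\to-\infty$, leaving exactly one zero $\lambda_c$ on $\mathbb{R}_+^*$, automatically $>\lambda_0$, and then $\sigma^2_{\mathrm{opt}}=M_Y'(\lambda_c)/\lambda_c$.

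\emph{Main obstacle.} The only ingredient not already present in the discussion preceding the theorem is the compactness-plus-asymptotics argument of the previous paragraph: it is what rigorously upgrades the heuristic ``by smoothness the critical $\sigma^2$ is the one with $g_{\sigma_{\mathrm{opt}},p}(\lambda_4)=0$'' into a genuine double root of $g_{\sigma_{\mathrm{opt}},p}$, and it is the point requiring care, since one must control $g_{\sigma,p}$ simultaneously near $\lambda=0$, on a compact middle range, and as $\lambda\to+\infty$. Everything else is a reorganization of the monotonicity bookkeeping already carried out above.
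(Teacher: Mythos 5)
Your proposal is correct and follows essentially the same route as the paper: part (i) and the setup of part (ii) are exactly the in-text variational analysis preceding the statement (sign of $N_p$, the threshold $\lambda_0$, the bound $\sigma_1^2$, and the dichotomy governed by the sign of $g'_{\sigma,p}(\lambda_2)$), and your conclusions match. The only place you diverge is the final ``critical case'' step: the paper handles it by appealing to its general machinery (\Cref{GeneralCharac} and \Cref{LemmaMerged}, whose proofs rest on the monotonicity $\partial_\sigma\big[g_{\sigma,p}(\lambda_m(\sigma))\big]=\sigma\,\lambda_m(\sigma)^2>0$ at critical points), whereas you argue directly via the compactness-plus-asymptotics perturbation (closedness of the proxy set, quadratic control near $0$, uniform positivity on a compact middle range, and the $\tfrac12\sigma^2\lambda^2-\lambda-\ln p$ growth at infinity) to force a double root at the local minimum $\lambda_4$, and you establish uniqueness of $\lambda_c$ by a separate monotonicity analysis of $\widetilde N(\lambda)=\lambda M_Y'(\lambda)-2M_Y(\lambda)$, whose second derivative $\lambda M_Y'''(\lambda)$ you correctly relate to $N_p$ through the displayed formula for $g^{(3)}_{\sigma,p}$. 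Both arguments are sound; yours is self-contained for the symmetric case and, as a bonus, supplies an explicit proof of the uniqueness of $\lambda_c$ in $(\lambda_0,+\infty)$ that the paper asserts without a dedicated argument.
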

Equivalently, the optimal variance proxy of the non-strictly sub-Gaussian regime admits the closed-form \beqq
\sigma_{\mathrm{opt}}^{2} = \frac{2p\,\sinh(\lambda_{c})} {\lambda_{c}\,\big(2p\cosh(\lambda_{c}) + 1 - 2p\big)}, \eeqq
where $\lambda_c$ is the unique solution of \beqq p\,\lambda_{c}\,\sinh(\lambda_{c}) - \big(1 - 2p + 2p\cosh(\lambda_{c})\big) \ln\!\big(1 - 2p + 2p\cosh(\lambda_{c})\big) = 0. \eeqq
In this regime, $\operatorname{Var}[X]=2p < \sigma_{\mathrm{opt}}^{2} < \sigma_1^2=\tfrac{(1-2p)^2}{4(1-4p)}$.

\subsection{Asymmetric 3-mass distribution}\label{subsec;asymmthreepoints}

Let $X$ be a random variable supported on $\{-a,0,a\}$, $a>0$, and $\P(X=-a)=p_1\,,\, \P(X=0)=p_3 = 1-p_1-p_2\,,\, \P(X=a)=p_2$, 
where $p_1, p_2, p_3\in \left( 0,1\right)$, see \Cref{fig:asymmetric}. As before, we may define $Y=\frac{1}{a}X$ and restrict to $a=1$. We also may assume $p_2 \geq p_1$ without loss of generality.  We have
$\mu:=\E[Y]=-p_1+p_2 \geq 0$ and $\operatorname{Var}[Y]=p_1+p_2-(-p_1+p_2)^2$. 
We have that $\sigma^2$ is a variance proxy of $Y$ if and only if
\beqq \E\big[e^{\lambda Y}\big]=p_1 e^{-\lambda }+ p_2e^{\lambda } +1-p_1-p_2\leq e^{(\frac{\lambda^2\sigma^2}{2}+\lambda \mu)} \,\, ,\,\, \forall \, \lambda\in \mathbb{R} .\eeqq
This inequality is equivalent to
\beq \label{eq:gp12}
g_{\sigma,p_1,p_2}(\lambda):=\frac{\lambda^2\sigma^2}{2}-\ln(u_{0;p_1, p_2}(\lambda))+\lambda \mu \geq 0 \,\,,\,\,  \forall \, \lambda\in \mathbb{R},\eeq
with  $u_{0;p_1, p_2}(\lambda) := p_1e^{-\lambda}+p_2e^{\lambda}+1-p_1-p_2$ a smooth function and  $u_{0;p_1, p_2}(\lambda) > 0 \text{ , } \forall \lambda \in \mathbb{R}$. For convenience, we drop the dependence in $(p_1, p_2)$ in the notation, as per $u_0(\lambda)$, and we define $u_{i}(\lambda)=u_{i-1}'(\lambda),   \  i  \in \{1,2,3\}$, $\forall \, \lambda\in \mathbb{R}$. 
We have $u_1(\lambda) = - p_1e^{-\lambda} + p_2e^{\lambda}$ and we observe that $u_2(\lambda) = u_0(\lambda)-p_3$, $u_3(\lambda) = u_1(\lambda)$ and $u_1^2(\lambda)=(u_0(\lambda)-p_3)^2-4p_1p_2$ for all $\lambda \in \mathbb{R}$. 
The general theory implies that $\operatorname{Var}[Y]=p_1+p_2-(-p_1+p_2)^2$ is a lower bound for the optimal variance proxy, thus we shall only consider larger or equal values for $\sigma^2$.

The function $g_{\sigma,p_1,p_2}$ is a smooth function of $(\lambda,\sigma,p_1,p_2)$ and its first derivatives can be expressed as:
\bea
    g'_{\sigma,p_1,p_2}(\lambda)&=&\lambda \sigma^2 -\frac{u_1(\lambda)}{u_0(\lambda)} + \mu \,\,,\,\, 
g^{(2)}_{\sigma,p_1,p_2}(\lambda)= \sigma^2-\frac{u_0(\lambda)p_3-p_3^2+4p_1p_2}{u_0(\lambda)^2}\cr
g^{(3)}_{\sigma,p_1,p_2}(\lambda)&=& u_1(\lambda)\frac{p_3u_0(\lambda)+8p_1p_2-2p_3^2}{u_0(\lambda)^3}.
\eea
Note in particular that $g^{(3)}_{\sigma,p_1,p_2}$ does not depend on $\sigma$. Moreover, $g^{(3)}_{\sigma,p_1,p_2}$ can be written as
\beq
\label{g3} 
g^{(3)}_{\sigma,p_1,p_2}(\lambda) = \frac{u_1(\lambda)N_{p_1,p_2}(\lambda)}{u_0(\lambda)^3}, \,\,\text{where}\,\, 
N_{p_1,p_2}(\lambda) := p_3u_0(\lambda)+8p_1p_2-2p_3^2.
\eeq
Observe that $u_0(\lambda)$ is strictly positive on $\mathbb{R}$. Therefore, the function $g_{\sigma,p_1,p_2}^{(3)}$ and $u_1 N_{p_1, p_2} $ share the same sign. We know that $u_1$ changes sign at $\lambda_0 := -\frac{1}{2}\ln(\frac{p_2}{p_1})$ assuming that $p_2\geq p_1$ we have $\lambda_0 \leq 0$. Hence, to determine the sign of $g^{(3)}_{\sigma,p_1,p_2}$, it remains to analyze the sign of $N_{p_1, p_2}$. For this purpose, it is sufficient to  study the sign of the polynomial
\begin{equation}\label{eq:P-def}
    P(X) :=p_2p_3X^2+(8p_1p_2-p_3^2)X+p_1p_3, \text{where}\  X = e^{\lambda} > 0.
\end{equation}
To determine the sign of $P(X)$, we compute its discriminant:
\begin{equation*}\label{eq:delta-def}
    \Delta = (p_3^2)^2 - 20p_1p_2(p_3)^2 + 64p_1^2p_2^2= (p_3^2-4p_1p_2)(p_3^2-16p_1p_2),
\end{equation*}
and it gives two different regimes, separated as illustrated on the right panel of Figure~\ref{fig:3-mass-illustrations}, that we shall study separately. 
%

\subsubsection{Case $p_3\leq 4\sqrt{p_1p_2}$}
In this case, we have the following theorem. 
\begin{theorem}[Optimal variance proxy for $p_3\leq 4\sqrt{p_1p_2}$, closed-form expression.]\label{TheoRegime1} 
When $p_3\leq 4\sqrt{p_1p_2}$, the optimal variance proxy is given by
\beqq \sigma_{\mathrm{opt}}^2= \frac{2(p_2-p_1)}{\ln p_2 - \ln p_1}.\eeqq
\end{theorem}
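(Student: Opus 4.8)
\emph{Plan.} The key is to guess the critical point: let $\lambda_0:=-\tfrac12\ln(p_2/p_1)$ be the sign‑change point of $u_1$ (already named in the excerpt), which in the present regime is $\le 0$, and set $\sigma_*^2:=-\mu/\lambda_0=\tfrac{2(p_2-p_1)}{\ln(p_2/p_1)}$ (with $\mu=p_2-p_1$). If $p_1=p_2$ then $\lambda_0=0$ and $\sigma_*^2=\lim\tfrac{2(p_2-p_1)}{\ln(p_2/p_1)}=2p_1=\operatorname{Var}[Y]$, and the claim is exactly the $p\ge\tfrac16$ case of \Cref{TheoSymmetric3mass}; so I would assume $p_1<p_2$, hence $\lambda_0<0$. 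First I would record, using $e^{2\lambda_0}=p_1/p_2$, the identities $u_0(2\lambda_0)=p_1+p_2+p_3=1=u_0(0)$ and $u_1(2\lambda_0)=p_1-p_2=-\mu=-u_1(0)$; more generally $u_0(2\lambda_0-\lambda)=u_0(\lambda)$. From $g_{\sigma,p_1,p_2}(0)=g'_{\sigma,p_1,p_2}(0)=0$ (true for all $\sigma$) and a one‑line substitution these give, for the specific value $\sigma^2=\sigma_*^2$, the double zero $g_{\sigma_*,p_1,p_2}(2\lambda_0)=2\lambda_0^2\sigma_*^2+2\lambda_0\mu=0$ and $g'_{\sigma_*,p_1,p_2}(2\lambda_0)=2\lambda_0\sigma_*^2+2\mu=0$; in fact $g_{\sigma_*,p_1,p_2}$ is symmetric about $\lambda_0$, and $g_{\sigma,p_1,p_2}(2\lambda_0)=2\lambda_0^2\sigma^2+2\lambda_0\mu$ is a strictly increasing affine function of $\sigma^2$ vanishing precisely at $\sigma_*^2$. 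Write $g_*:=g_{\sigma_*,p_1,p_2}$.

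\emph{Shape of $g^{(2)}$.} I would continue the discriminant analysis: when $p_3\le 4\sqrt{p_1p_2}$ either $4p_1p_2\le p_3^2\le 16p_1p_2$, so $\Delta\le 0$ and $P$ (leading coefficient $p_2p_3>0$) is everywhere positive; or $p_3^2<4p_1p_2$, so $\Delta>0$ but the roots of $P$ have product $p_1/p_2>0$ and sum $-(8p_1p_2-p_3^2)/(p_2p_3)<0$, hence both are negative and again $P>0$ on $(0,\infty)$. (Equivalently, $N_{p_1,p_2}$ is convex with minimum $8p_1p_2+2p_3\sqrt{p_1p_2}-p_3^2\ge 0$ at $\lambda_0$.) Therefore $N_{p_1,p_2}>0$ on $\mathbb{R}$, so by~\eqref{g3} $g^{(3)}_{\sigma,p_1,p_2}$ has the sign of $u_1$ and, for every $\sigma$, $g^{(2)}_{\sigma,p_1,p_2}$ is strictly decreasing on $(-\infty,\lambda_0]$ and strictly increasing on $[\lambda_0,\infty)$; since $\lambda_0\le 0$ it is strictly increasing on $\mathbb{R}_+$, hence has at most one zero there. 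By \Cref{Prop01Zeros} this will give $g_*>0$ on $\mathbb{R}_+^*$ once $\sigma_*^2\ge\operatorname{Var}[Y]$ is known.

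\emph{$\sigma_*^2$ is a variance proxy.} I would argue by counting zeros of $g'_*$, which has the three distinct zeros $2\lambda_0<\lambda_0<0$. Since $g^{(2)}_*$ is $V$‑shaped about $\lambda_0$, $g'_*$ is increasing–decreasing–increasing; if $g^{(2)}_*(\lambda_0)\ge 0$ it would be monotone with at most one zero, a contradiction, so $g^{(2)}_*(\lambda_0)<0$ and $g^{(2)}_*$ has exactly two zeros $\lambda_1<\lambda_0<\lambda_2$ (negative between them). Each of the three monotonicity intervals $(-\infty,\lambda_1)$, $(\lambda_1,\lambda_2)$, $(\lambda_2,\infty)$ of $g'_*$ carries exactly one of its zeros; $\lambda_0$ sits in the middle one, and $2\lambda_0<\lambda_0$ forbids $2\lambda_0\in(\lambda_2,\infty)$, forcing $2\lambda_0\in(-\infty,\lambda_1)$ and $0\in(\lambda_2,\infty)$. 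In particular $g^{(2)}_*(0)\ge 0$, i.e.\ $\sigma_*^2\ge\operatorname{Var}[Y]$ — which retroactively licenses the use of \Cref{Prop01Zeros} above and yields $g_*>0$ on $\mathbb{R}_+^*$. Propagating the signs of $g'_*$ through the three intervals (using $g'_*(2\lambda_0)=g'_*(\lambda_0)=g'_*(0)=0$) gives $g'_*<0$ on $(-\infty,2\lambda_0)\cup(\lambda_0,0)$ and $g'_*>0$ on $(2\lambda_0,\lambda_0)\cup(0,\infty)$, so $g_*$ has local minima at $2\lambda_0$ and $0$ — both equal to $0$ — a local maximum at $\lambda_0$, and $g_*(\pm\infty)=+\infty$. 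Hence $g_*\ge 0$ on $\mathbb{R}$.

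\emph{Optimality and the main obstacle.} For $\sigma^2<\sigma_*^2$, $g_{\sigma,p_1,p_2}(2\lambda_0)=2\lambda_0^2\sigma^2+2\lambda_0\mu<0$ (strictly increasing in $\sigma^2$, zero at $\sigma_*^2$), so $\sigma^2$ is not a variance proxy; together with the previous paragraph this gives $\sigma^2_{\mathrm{opt}}=\sigma_*^2=\tfrac{2(p_2-p_1)}{\ln(p_2/p_1)}$. I expect the delicate point to be the bookkeeping in the third step: the inequality $\sigma_*^2\ge\operatorname{Var}[Y]$ is needed to invoke \Cref{Prop01Zeros} on $\mathbb{R}_+^*$, yet its cleanest proof is the zero‑count for $g'_*$ itself, so the two arguments must be ordered carefully to avoid circularity. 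One should also dispatch the boundary case $p_3=4\sqrt{p_1p_2}$ (where $N_{p_1,p_2}$ vanishes only at $\lambda_0$, so $g^{(2)}$ stays strictly monotone on each side of $\lambda_0$) and the already‑settled case $p_1=p_2$.
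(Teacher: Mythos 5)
Your proposal is correct, and although it shares the paper's scaffolding, it handles the decisive step by a different and more self-contained argument. Both proofs start identically: the double zero of $g_{\sigma,p_1,p_2}$ at $2\lambda_0=-\ln(p_2/p_1)$ for $\sigma_*^2=\tfrac{2(p_2-p_1)}{\ln(p_2/p_1)}$, and the discriminant analysis showing $N_{p_1,p_2}>0$ in this regime, so that $g^{(2)}$ decreases up to $\lambda_0\le 0$ and increases afterwards. From there the paper disposes of $\mathbb{R}_+$ by convexity and invokes \Cref{LemmaMerged} on $\mathbb{R}_-^*$, which simultaneously delivers $\sigma_*^2>\operatorname{Var}[Y]$ and the equivalence ``$g\ge 0$ on $\mathbb{R}_-$ iff $\sigma^2\ge\sigma_*^2$''. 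You instead work directly at $\sigma^2=\sigma_*^2$: the symmetry of $g_{\sigma_*}$ about $\lambda_0$, the three explicit zeros $2\lambda_0<\lambda_0<0$ of $g'_{\sigma_*}$, and a zero-count over the three monotonicity intervals of $g'_{\sigma_*}$ give the full sign pattern of $g'_{\sigma_*}$ (so $g_{\sigma_*}\ge 0$ with its minima exactly at $2\lambda_0$ and $0$) and, as a byproduct, $\lambda_2<0$, i.e.\ $\sigma_*^2\ge\operatorname{Var}[Y]$ --- an inequality that is not automatic (it can fail when $p_3>4\sqrt{p_1p_2}$) and that the paper obtains only through the general machinery. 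For optimality you use the quadratic dependence of $g$ on $\sigma^2$ evaluated at $2\lambda_0$, the same trick the paper employs in the proof of \Cref{GeneralCharac} but not in its proof of this theorem. What your route buys is elementarity and independence from \Cref{LemmaMerged} and \Cref{LemmaNumber}; what the paper's route buys is a lemma reusable in the harder regime of \Cref{TheoRegime2}. Your worry about circularity is moot: the zero-count uses only the $\sigma$-independent shape of $g^{(2)}$ and the algebraic identities at $2\lambda_0$, $\lambda_0$, $0$, so $\sigma_*^2\ge\operatorname{Var}[Y]$ is available before \Cref{Prop01Zeros} is needed --- and in fact your sign analysis of $g'_{\sigma_*}$ already yields positivity on $\mathbb{R}_+^*$, making that invocation redundant. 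Your separate treatment of $p_1=p_2$ and of the boundary case $p_3=4\sqrt{p_1p_2}$ is sound, and if anything slightly more careful than the paper's.
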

The proof is deferred to the Appendix.
\Cref{TheoRegime1} implies the following corollary.
\begin{corollary}\label{Strictsub-GaussianityRegime1} For $p_3\leq 4\sqrt{p_1p_2}$, the random variable $Y$ is strictly sub-Gaussian if and only if $p_1=p_2=p$. In this symmetric case, the condition $p_3\leq 4\sqrt{p_1p_2}$ is equivalent to $p\geq \frac{1}{6}$. 
\end{corollary}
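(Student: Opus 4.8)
The plan is to unwind the definition of strict sub-Gaussianity — namely $\sigma_{\mathrm{opt}}^{2}(Y)=\operatorname{Var}[Y]$ — and confront it with the closed form supplied by \Cref{TheoRegime1}. In the present regime $p_3\le 4\sqrt{p_1p_2}$ that theorem gives $\sigma_{\mathrm{opt}}^{2}(Y)=\tfrac{2(p_2-p_1)}{\ln p_2-\ln p_1}$, while $\operatorname{Var}[Y]=p_1+p_2-(p_2-p_1)^2=p_3(p_1+p_2)+4p_1p_2$, so everything reduces to deciding when these two quantities coincide.

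For the ``if'' direction I would argue that when $p_1=p_2=p$ one has $\operatorname{Var}[Y]=2p$ and $\tfrac{2(p_2-p_1)}{\ln p_2-\ln p_1}\to 2p$ as $p_1\uparrow p_2$, so they agree; equivalently, with $p_1=p_2=p$ the hypothesis $p_3=1-2p\le 4\sqrt{p_1p_2}=4p$ is precisely $p\ge\tfrac16$, and for such $p$ one may simply cite \Cref{TheoSymmetric3mass}(i). Either way $Y$ is strictly sub-Gaussian, and this also settles the last sentence of the corollary.

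For the converse I would not match the two explicit formulas head-on; instead I would show that when $p_1\ne p_2$ (so, by the standing convention, $p_1<p_2$) the candidate $\sigma^2=\operatorname{Var}[Y]$ already fails to be a variance proxy. Evaluating at $\lambda=0$ the expressions for $g_{\sigma,p_1,p_2}$ and its first three derivatives from \Cref{subsec;asymmthreepoints} (using $u_0(0)=1$ and $u_1(0)=p_2-p_1$) gives $g_{\sigma,p_1,p_2}(0)=g'_{\sigma,p_1,p_2}(0)=0$ unconditionally, $g^{(2)}_{\sigma,p_1,p_2}(0)=\sigma^2-\operatorname{Var}[Y]$, which vanishes for $\sigma^2=\operatorname{Var}[Y]$, and $g^{(3)}_{\sigma,p_1,p_2}(0)=(p_2-p_1)\,N_{p_1,p_2}(0)$ with $N_{p_1,p_2}(0)=p_3(1-2p_3)+8p_1p_2$. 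Hence a third-order Taylor expansion at the origin reads $g_{\sigma,p_1,p_2}(\lambda)=\tfrac16 g^{(3)}_{\sigma,p_1,p_2}(0)\,\lambda^3+o(\lambda^3)$, so as soon as $g^{(3)}_{\sigma,p_1,p_2}(0)\ne 0$ the function $g_{\sigma,p_1,p_2}$ changes sign through $\lambda=0$ and is negative just to the left of it; consequently $\operatorname{Var}[Y]$ is not a variance proxy, which forces $\sigma_{\mathrm{opt}}^{2}(Y)>\operatorname{Var}[Y]$ (a limiting argument gives this: were $\operatorname{Var}[Y]+\varepsilon$ a variance proxy for every $\varepsilon>0$, then so would be $\operatorname{Var}[Y]$, by continuity of $\sigma^2\mapsto g_{\sigma,p_1,p_2}(\lambda)$). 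Thus $Y$ is not strictly sub-Gaussian.

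The step I expect to require the most care is verifying that $N_{p_1,p_2}(0)>0$ throughout the regime, which is exactly what makes $g^{(3)}_{\sigma,p_1,p_2}(0)=(p_2-p_1)N_{p_1,p_2}(0)>0$ when $p_1<p_2$. The hypothesis enters twice: first, $p_3^2\le 16p_1p_2$ yields $8p_1p_2\ge\tfrac12 p_3^2$, hence $N_{p_1,p_2}(0)\ge p_3-2p_3^2+\tfrac12 p_3^2=p_3\bigl(1-\tfrac32 p_3\bigr)$; second, AM--GM gives $p_3\le 4\sqrt{p_1p_2}\le 2(p_1+p_2)=2(1-p_3)$, the second inequality being strict because $p_1\ne p_2$, so $p_3<\tfrac23$. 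Together these give $N_{p_1,p_2}(0)>0$ and close the argument. (An alternative to this last paragraph is to combine \Cref{TheoRegime1} with the strict inequality $\tfrac{2(p_2-p_1)}{\ln p_2-\ln p_1}>p_1+p_2-(p_2-p_1)^2$; writing $s=p_1+p_2$, $t=(p_2-p_1)/s$ and $\ln(p_2/p_1)=2\artanh t$, and using that $s\ge\tfrac{1}{1+2\sqrt{1-t^2}}$ in this regime, it reduces to $\tfrac{1}{1+2\sqrt{1-t^2}}>\tfrac{\artanh t-t}{t^2\artanh t}$ for $t\in(0,1)$ — a one-variable estimate, but a computationally heavier route.)
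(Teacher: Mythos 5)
Your proposal is correct, but it proves the nontrivial direction by a different mechanism than the paper. The paper's proof is a one-liner: by \Cref{TheoRegime1} the optimal proxy equals $\tfrac{2(p_2-p_1)}{\ln(p_2/p_1)}$, this is $\geq \operatorname{Var}[Y]=p_1+p_2-(p_2-p_1)^2$ by the general theory, and equality is asserted to hold if and only if $p_1=p_2$ — i.e.\ the strict inequality between the two closed forms for $p_1\neq p_2$ is treated as obvious. You instead bypass that two-variable comparison entirely: you show that for $p_1\neq p_2$ the choice $\sigma^2=\operatorname{Var}[Y]$ is already not a variance proxy, because $g(0)=g'(0)=g''(0)=0$ while $g^{(3)}_{\sigma,p_1,p_2}(0)=(p_2-p_1)N_{p_1,p_2}(0)\neq 0$ (equivalently, the third central moment of $Y$ does not vanish), so $g$ is negative just to one side of the origin; the regime hypothesis enters only through the elementary estimates $8p_1p_2\geq \tfrac12 p_3^2$ and (strict AM--GM) $p_3<\tfrac23$, which give $N_{p_1,p_2}(0)>0$. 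Your closing limiting argument that the infimum of proxies is attained (so "not a proxy at the variance" really forces $\sigma^2_{\mathrm{opt}}>\operatorname{Var}[Y]$) is correct and worth keeping, since the definition of $\sigma^2_{\mathrm{opt}}$ is an infimum. Your computations check out ($u_0(0)=1$, $u_1(0)=p_2-p_1$, $N_{p_1,p_2}(0)=p_3(1-2p_3)+8p_1p_2$, and indeed $(p_2-p_1)N_{p_1,p_2}(0)=-\mathbb{E}[(Y-\mu)^3]$). What each approach buys: the paper's argument is shorter and, once the asserted strict inequality is granted, gives the quantitative gap between proxy and variance, but that inequality is not quite as immediate as the word "obvious" suggests; your skewness-based argument is fully self-contained, does not even need \Cref{TheoRegime1} for the converse, and makes transparent why the regime condition $p_3\le 4\sqrt{p_1p_2}$ is exactly what guarantees nonzero skewness for asymmetric parameters. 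The "if" direction and the equivalence $p_3\le 4\sqrt{p_1p_2}\Leftrightarrow p\ge\tfrac16$ when $p_1=p_2=p$ are handled the same way in both (via \Cref{TheoSymmetric3mass}(i) or the limiting value $2p$).
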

\begin{proof}It is obvious from the fact that in this case $\sigma_{\mathrm{opt}}^2=\frac{2(p_2-p_1)}{\ln(p_2/p_1)}\geq \operatorname{Var}[Y]=p_1+p_2-(p_2-p_1)^2$ with equality if and only if $p_1=p_2$.
\end{proof}
\begin{figure}[htbp]
  \centering
  \input{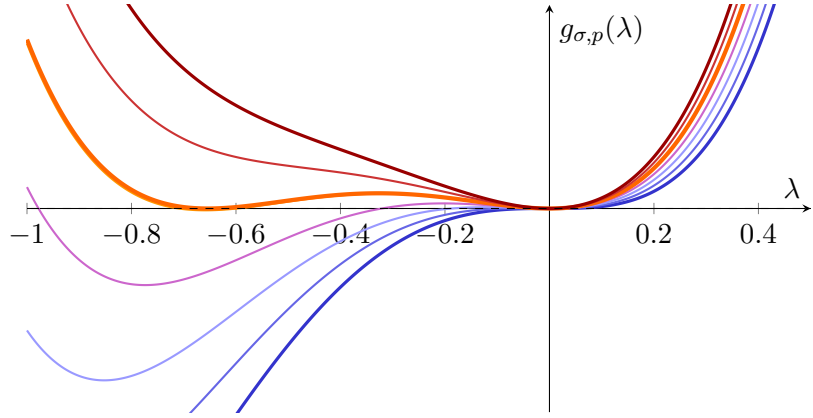}
  \caption{Functions $\lambda \mapsto g_{\sigma,p_1,p_2}(\lambda)$ of Equation~\eqref{eq:gp12} for $(p_1, p_2) = (0.13, 0.25)$, with $\sigma^2$ varying from the upper bound $\frac{2\sqrt{p_1p_2}}{p_3+2\sqrt{p_1p_2}}$ in \textcolor{darkred}{dark red} down to the variance $\text{Var}[Y]$ in \textcolor{blue}{blue}  (the function $g$ then becomes locally negative around $\lambda = 0_-$), while the \textcolor{orange}{orange} curve stands for the optimal proxy variance $\sigma_{\mathrm{opt}}^2= \frac{2(p_2-p_1)}{\ln p_2 - \ln p_1}$. The intermediate curves illustrate the progressive transition from convex behavior around 0 to oscillating behavior.}
  \label{fig:/figures/g_plot_p1p2}
\end{figure}

\subsubsection{Case $p_3>4\sqrt{p_1p_2}$}
In this case, the discriminant $\Delta$ defined in Equation~\eqref{eq:delta-def} is positive and  the polynomial $P$ of Equation~\eqref{eq:P-def} has two distinct positive roots so that $N_{p_1,p_2}$ defined in Equation~\eqref{g3} has exactly two roots. The following lemma provides the location of the latter roots.

\begin{lemma}
\label{lemma:lambda_ordering}
Let $p_1, p_2 \in (0,1)$ such that $p_2 \geq p_1$ and $(1-p_1-p_2)^2>16p_1p_2$. Then $N_{p_1,p_2}$ defined in Equation~\eqref{g3} has exactly two roots which are \emph{positive}.    
\end{lemma}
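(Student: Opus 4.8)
The plan is to transfer the statement about the transcendental function $N_{p_1,p_2}$ to an elementary analysis of the quadratic $P$ of Equation~\eqref{eq:P-def}. Recall from \eqref{g3} that $N_{p_1,p_2}(\lambda)=p_3u_0(\lambda)+8p_1p_2-2p_3^2$ and that, with $X=e^{\lambda}>0$, one has $X\,u_0(\lambda)=p_1+p_3X+p_2X^2$, so that $X\cdot N_{p_1,p_2}(\ln X)=p_2p_3X^2+(8p_1p_2-p_3^2)X+p_1p_3=P(X)$. Since $\lambda\mapsto e^{\lambda}$ is a bijection from $\mathbb{R}$ onto $(0,+\infty)$ and $X>0$, the real zeros of $N_{p_1,p_2}$ correspond one-to-one to the \emph{positive} zeros of $P$. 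Hence it suffices to show that $P$ has exactly two distinct positive roots.

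First I would show that $P$ has two distinct real roots. Its discriminant, already written in \eqref{eq:delta-def}, factors as $\Delta=(8p_1p_2-p_3^2)^2-4p_1p_2p_3^2=(p_3^2-4p_1p_2)(p_3^2-16p_1p_2)$. The hypothesis $(1-p_1-p_2)^2>16p_1p_2$ is precisely $p_3^2>16p_1p_2$, which makes the second factor strictly positive; and since $p_1,p_2>0$ give $16p_1p_2>4p_1p_2$, the first factor is positive as well. Therefore $\Delta>0$ and $P$ has two distinct real roots $X_1\neq X_2$.

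Next I would check both roots are positive by a sign analysis of the coefficients via Vieta's formulas. Using the standing assumption $p_1,p_2,p_3\in(0,1)$, the leading coefficient $p_2p_3$ is positive, so $X_1X_2=\dfrac{p_1p_3}{p_2p_3}=\dfrac{p_1}{p_2}>0$ and $X_1+X_2=\dfrac{p_3^2-8p_1p_2}{p_2p_3}$. The product being positive forces the two real roots to have the same sign, and since $p_3^2>16p_1p_2>8p_1p_2$ the sum is strictly positive, so both roots lie in $(0,+\infty)$. Combined with the previous step, $P$ has exactly two distinct positive roots, hence $N_{p_1,p_2}$ has exactly two real zeros $\lambda_i=\ln X_i$, which is the claim.

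There is no genuine obstacle here; the proof is a routine quadratic discussion. The only points demanding a little care are keeping track of which parameters are positive — in particular using $p_3>0$, which comes from the modeling assumption that $(p_1,p_2,p_3)$ is a probability vector — and verifying the factorization of $\Delta$, i.e. expanding $(8p_1p_2-p_3^2)^2-4p_1p_2p_3^2=p_3^4-20p_1p_2p_3^2+64p_1^2p_2^2$ and recognizing it as $(p_3^2-4p_1p_2)(p_3^2-16p_1p_2)$.
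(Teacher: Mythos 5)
Your reduction to the quadratic is sound: the identity $e^{\lambda}N_{p_1,p_2}(\lambda)=P(e^{\lambda})$ with $P$ as in Equation~\eqref{eq:P-def}, the positivity of the discriminant under $p_3^2>16p_1p_2$, and the Vieta signs do show that $P$ has two distinct roots in $(0,+\infty)$, hence that $N_{p_1,p_2}$ has exactly two \emph{real} zeros. But that is not what the lemma asserts. The zeros of $N_{p_1,p_2}$ are values of $\lambda$, and ``positive'' means $\lambda_{\pm}=\ln X_{\pm}>0$, i.e.\ $X_{\pm}>1$, not merely $X_{\pm}>0$. Your last sentence silently downgrades ``positive'' to ``real'', so the positivity claim is never addressed. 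This is precisely the step where the paper's proof does its work: it writes the zeros explicitly as logarithms and argues, through a chain of equivalent inequalities ending in $36y^2>0$ (with $x=p_3^2$, $y=p_1p_2$), that the argument of the logarithm of the smaller zero exceeds $1$.

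Moreover, the gap is not closable by adding one line to your argument: your own Vieta identity $X_1X_2=p_1/p_2\leq 1$ (using $p_2\geq p_1$), combined with $X_-<X_+$, gives $X_-^2<X_-X_+\leq 1$, hence $X_-<1$ and $\lambda_-=\ln X_-<0$. Concretely, for $p_1=0.04$, $p_2=0.06$, $p_3=0.9$ the hypothesis $p_3^2>16p_1p_2$ holds, yet $N_{p_1,p_2}(0)=p_3+8p_1p_2-2p_3^2<0$, so $0$ lies strictly between the two zeros. This is in direct tension with the conclusion $0<\lambda_-<\lambda_+$ reached in the paper, whose displayed root formula has denominator $2p_1p_2$, whereas the quadratic formula applied to $P$ gives $2p_2p_3$. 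So to prove the lemma as stated along your route you would need $X_\pm>1$ (for instance $P(1)>0$ together with the vertex of $P$ lying to the right of $1$), and the example above shows this can fail under the stated hypotheses; the discrepancy should be flagged and resolved rather than passed over. As it stands, your proposal establishes ``exactly two real zeros'' but not the positivity claimed in the lemma.
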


We now analyze the sign of the function  $g_{\sigma,p_1,p_2}(.;\sigma^2)$  separately  on $\mathbb{R}_-^*$ and $\mathbb{R}_+^*$. Let us first observe that the discussion made for $\mathbb{R}_-^*$ in the proof of \Cref{TheoRegime1} is still valid. Thus, $g_{\sigma,p_1,p_2}(.;\sigma^2)$ is non-negative on $\mathbb{R}_-^*$ if and only if $\sigma^2\geq \frac{2(p_2-p_1)}{\ln(p_2/p_1)}$ (see Figure~\ref{fig:/figures/g_plot_p1p2}). Let us now discuss the situation on $\mathbb{R}_+^*$. From \Cref{lemma:lambda_ordering} and \eqref{g3} we get that $g^{(3)}$ is negative on $(-\infty,\lambda_0)\cup\left(\lambda_-(\sigma),\lambda_+(\sigma)\right)$  and positive on $(\lambda_0,\lambda_-(\sigma))\cup\left(\lambda_+(\sigma),+\infty\right)$. Consequently, $g^{(2)}_{\sigma,p_1,p_2}(.;\sigma^2)$ is increasing on $(0,\lambda_-)$ and since $g^{(2)}_{\sigma,p_1,p_2}(0;\sigma^2)=0$ is it thus positive on this interval. Then, $g_{\sigma,p_1,p_2}^{(2)}(.;\sigma^2)$ is decreasing on $(\lambda_-(\sigma),\lambda_+(\sigma))$ and then increasing on $(\lambda_+(\sigma),+\infty)$ with $g_{\sigma,p_1,p_2}^{(2)}(+\infty;\sigma^2)=\sigma^2>0$. Thus, there are only two possible cases: either $g_{\sigma,p_1,p_2}^{(2)}(\lambda_+(\sigma);\sigma^2)\geq 0$ and then $g_{\sigma,p_1,p_2}(.;\sigma^2)$ is strictly convex and positive on $\mathbb{R}_+^*$ or $g^{(2)}_{\sigma,p_1,p_2}(\lambda_+(\sigma);\sigma^2)<0$ in which case $g_{\sigma,p_1,p_2}^{(2)}(.;\sigma^2)$ has exactly two zeros $(\lambda_1(\sigma),\lambda_2(\sigma))$ on $\mathbb{R}_+^*$ such that $\lambda_-(\sigma)<\lambda_1(\sigma)<\lambda_+(\sigma)<\lambda_2(\sigma)$. We may thus apply \Cref{LemmaMerged} whose conclusion depends on the existence of a solution $(\lambda,\sigma^2)\in \left(\lambda_-,+\infty\right)\times \left(\operatorname{Var}[Y],+\infty\right)$ of the equation $g_{\sigma,p_1,p_2}(\lambda;\sigma^2)=0=g_{\sigma,p_1,p_2}'(\lambda;\sigma^2)$. This set of equations is equivalent to
\[
\sigma^2=\frac{1}{\lambda}\left(\frac{u_1(\lambda)}{u_0(\lambda)}-\mu\right)=\frac{1}{\lambda}\left(p_1-p_2+\frac{p_2e^\lambda-p_1e^{-\lambda}}{p_1e^{-\lambda}+p_2e^{\lambda}+1-p_1-p_2}\right),
\] 

and $\lambda$ positive solution of
\[
\lambda \frac{u_1(\lambda)}{u_0(\lambda)}- 2\ln (u_0(\lambda))+\lambda \mu=0 ,
\] 
i.e.
\beq
 \label{EqF} F(\lambda):=\lambda u_1(\lambda)-2u_0(\lambda)\ln u_0(\lambda)+ \lambda u_0(\lambda)(p_2-p_1)=0. \eeq
From \Cref{LemmaMerged}, $F(\lambda)=0$ has no solution on $\mathbb{R}_+^*$ such that $\sigma^2=\frac{1}{\lambda}\left(\frac{u_1(\lambda)}{u_0(\lambda)}-\mu\right)<\operatorname{Var}[Y]$. Thus, we have two cases detailed in the following theorem.
\begin{theorem}[Optimal variance proxy for $p_3> 4\sqrt{p_1p_2}$.]\label{TheoRegime2} 
When $p_3> 4\sqrt{p_1p_2}$, the optimal variance proxy depends on the zero of $F$ in \Cref{EqF}:
\begin{itemize}
    \item If $F$ has a positive zero $\lambda_c>0$ then \Cref{LemmaMerged} implies that this zero is unique on $\mathbb{R}_+^*$ and we get that $\sigma_c^2=\frac{1}{\lambda_c}\left(\frac{u_1(\lambda_c)}{u_0(\lambda_c)}-\mu\right)\geq\operatorname{Var}[Y]$. \Cref{GeneralCharac} implies that the optimal variance proxy is given by
    \[
     \sigma_{\mathrm{opt}}^2=\max\left(\frac{2(p_2-p_1)}{\ln(p_2/p_1)}, \frac{1}{\lambda_c}\left(\frac{u_1(\lambda_c)}{u_0(\lambda_c)}-(p_2-p_1)\right) \right).
     \]
    \item If $F$ has no zero on $\mathbb{R}_+^*$ then $g_{\sigma,p_1,p_2}(.;\sigma^2)$ is positive on $\mathbb{R}_+^*$ and thus the optimal variance proxy is given by $\sigma^2_{\text{opt}}=\frac{2(p_2-p_1)}{\ln(p_2/p_1)}$.
\end{itemize}
\end{theorem}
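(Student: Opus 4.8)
The heavy lifting has already been done above: the sign of $g^{(3)}_{\sigma,p_1,p_2}$ is controlled through \Cref{lemma:lambda_ordering}, the resulting variations of $g^{(2)}_{\sigma,p_1,p_2}$ on $\mathbb{R}_+^*$ have been described, and the system $g_{\sigma,p_1,p_2}(\lambda;\sigma^2)=0=g'_{\sigma,p_1,p_2}(\lambda;\sigma^2)$ has been rewritten, via \eqref{EqF}, as $F(\lambda)=0$ together with $\sigma^2=\frac1\lambda\bigl(\frac{u_1(\lambda)}{u_0(\lambda)}-\mu\bigr)$. The plan is to feed this into \Cref{GeneralCharac} by treating the two half-lines separately. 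On $\mathbb{R}_-^*$ nothing new is needed: as recalled just above, the discussion in the proof of \Cref{TheoRegime1} shows that $g_{\sigma,p_1,p_2}(\cdot;\sigma^2)$ is non-negative on $\mathbb{R}_-^*$ exactly when $\sigma^2\geq\frac{2(p_2-p_1)}{\ln(p_2/p_1)}$, and this threshold value is $\geq\operatorname{Var}[Y]$ by \Cref{Strictsub-GaussianityRegime1}; equivalently, the left half-line contributes precisely the value $\frac{2(p_2-p_1)}{\ln(p_2/p_1)}$ to the supremum in \Cref{GeneralCharac}. It remains to determine the contribution of $\mathbb{R}_+^*$.

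On $\mathbb{R}_+^*$ I would split according to the sign of $g^{(2)}_{\sigma,p_1,p_2}(\lambda_+;\sigma^2)$. For those $\sigma^2\geq\operatorname{Var}[Y]$ for which it is $\geq 0$, $g_{\sigma,p_1,p_2}(\cdot;\sigma^2)$ is strictly convex and positive on $\mathbb{R}_+^*$ and contributes nothing; on the complementary range $g^{(2)}_{\sigma,p_1,p_2}(\cdot;\sigma^2)$ has exactly two zeros on $\mathbb{R}_+^*$, so \Cref{LemmaMerged} applies there. Its first item gives that $g_{\sigma,p_1,p_2}=0=g'_{\sigma,p_1,p_2}$ has at most one solution $(\lambda_c,\sigma_c^2)\in\mathbb{R}_+^*\times\mathbb{R}_+^*$, and when it exists $\sigma_c^2>\operatorname{Var}[Y]$; since (using $M_Y(\lambda)=\ln u_0(\lambda)-\lambda\mu$) the rewriting above turns this into $F(\lambda_c)=0$ with $\sigma_c^2=\frac1{\lambda_c}\bigl(\frac{u_1(\lambda_c)}{u_0(\lambda_c)}-(p_2-p_1)\bigr)$, the solution exists precisely when $F$ has a positive zero.

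If $F$ has such a zero $\lambda_c$, I then have to check that $\lambda_c\in\mathcal L_c^*$, i.e.\ that $\lambda_c$ is a local minimum of $g_{\sigma_c,p_1,p_2}$: in the non-convex sub-case $g'_{\sigma_c,p_1,p_2}$ is increasing, then decreasing, then increasing on $\mathbb{R}_+^*$, and since $g_{\sigma_c,p_1,p_2}(0)=g'_{\sigma_c,p_1,p_2}(0)=0$ the function $g_{\sigma_c,p_1,p_2}$ first rises strictly above $0$ before it can vanish again, so its zero $\lambda_c$ is necessarily the subsequent local minimum. Hence $\sigma_c^2\in\mathcal S_c^*$, and since $\mathbb{R}_+^*$ contributes at most this one value while $\mathbb{R}_-^*$ contributes $\frac{2(p_2-p_1)}{\ln(p_2/p_1)}\geq\operatorname{Var}[Y]$, \Cref{GeneralCharac} yields $\sigma_{\mathrm{opt}}^2=\max\{\operatorname{Var}[Y],\sup\mathcal S_c^*\}=\max\bigl(\frac{2(p_2-p_1)}{\ln(p_2/p_1)},\,\frac1{\lambda_c}\bigl(\frac{u_1(\lambda_c)}{u_0(\lambda_c)}-(p_2-p_1)\bigr)\bigr)$. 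If instead $F$ has no positive zero, then $g=0=g'$ has no solution on $\mathbb{R}_+^*\times\mathbb{R}_+^*$, so the second item of \Cref{LemmaMerged} (together with the convex sub-case) forces $g_{\sigma,p_1,p_2}(\cdot;\sigma^2)$ to remain non-negative on $\mathbb{R}_+$ for every $\sigma^2>0$; taking $\sigma^2=\frac{2(p_2-p_1)}{\ln(p_2/p_1)}\geq\operatorname{Var}[Y]$ then makes $g_{\sigma,p_1,p_2}(\cdot;\sigma^2)$ non-negative on all of $\mathbb{R}$ while any smaller $\sigma^2$ fails already on $\mathbb{R}_-^*$, whence $\sigma_{\mathrm{opt}}^2=\frac{2(p_2-p_1)}{\ln(p_2/p_1)}$ (equivalently, $\mathcal S_c^*\cap\mathbb{R}_+^*=\emptyset$ in \Cref{GeneralCharac}).

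The step I expect to be most delicate is making \Cref{LemmaMerged} legitimately applicable on $\mathbb{R}_+^*$: its hypothesis that $g^{(2)}_{\sigma,p_1,p_2}(\cdot;\sigma^2)$ have exactly two zeros fails for large $\sigma^2$, so one must carefully isolate the $\sigma^2$-range on which it does hold, dispose of the complementary convex range by a direct positivity argument, and then confirm that the unique critical point produced by the lemma is genuinely a local minimum of $g_{\sigma_c,p_1,p_2}$ — only then does it belong to $\mathcal L_c^*$ and contribute the claimed value to $\sigma_{\mathrm{opt}}^2$.
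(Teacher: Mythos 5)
Your proof is correct and follows essentially the same route as the paper: split into the two half-lines, reuse the $\mathbb{R}_-^*$ analysis from \Cref{TheoRegime1}, use \Cref{lemma:lambda_ordering} to reduce the positive half-line to either the convex case or the two-zero case for $g''_{\sigma,p_1,p_2}$, apply \Cref{LemmaMerged} with the critical system rewritten as $F(\lambda)=0$ together with $\sigma^2=\frac{1}{\lambda}\bigl(\frac{u_1(\lambda)}{u_0(\lambda)}-\mu\bigr)$, and conclude via \Cref{GeneralCharac}. Your explicit isolation of the $\sigma^2$-range on which the two-zero hypothesis of \Cref{LemmaMerged} actually holds, and your verification that $\lambda_c$ is a genuine local minimum (hence $\sigma_c^2\in\mathcal{S}_c^*$), are if anything slightly more careful than the paper's presentation, which treats these points implicitly.
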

\begin{remark}Note that $\underset{\lambda\to +\infty}{\lim} F(\lambda)=-\infty$ and $F(\lambda)=\frac{1}{6}M_3[Y]\lambda^3+\frac{1}{4}\left(\frac{1}{3}M_4[Y]-\operatorname{Var}[Y]^2\right)\lambda^4+ O(\lambda^5)$, where $M_3[Y]:=\mathbb{E}[(Y-\mu)^3]$ and $M_4[Y]:=\mathbb{E}[(Y-\mu)^4]$. Thus, a sufficient condition for $F$ to admit a positive zero is that $M_3[Y]>0$ or $\left[M_3[Y]=0 \text{ and } \frac{1}{3}M_4[Y]\operatorname{Var}[Y]^2>0\right]$. In particular for $p_1=p_2=p$, we have $F(\lambda)=\frac{1}{6}p(1-6p)\lambda^4+O(\lambda^5)$ so that for $p>\frac{1}{6}$ (which is the equivalent condition to $p_3>4\sqrt{p_1p_2}$ in this case) $F$ always has a unique positive zero $\lambda_c$ and the corresponding variance $\sigma_c^2=\frac{1}{\lambda_c}\frac{u_1(\lambda_c)}{u_0(\lambda_c)}$ is always greater than $2p=\operatorname{Var}[Y]$ and hence the optimal variance proxy is always $\sigma_{\mathrm{opt}}^2=\sigma_c^2=\frac{1}{\lambda_c}\frac{u_1(\lambda_c)}{u_0(\lambda_c)}$ in the symmetric case $p_1=p_2=p$.
\end{remark}

\section{Optimal variance proxy for the discrete uniform distribution}\label{sec:uniform}

In this section, we briefly present the discrete, equally spaced uniform distribution, also known as the comb distribution, see \Cref{fig:uniform}. 
Our main result establishes that this law is strictly sub-Gaussian, 
with an optimal variance proxy that coincides with its variance. 

\begin{theorem}[Optimal variance proxy for the uniform discrete distribution.]
\label{TheoRDisceteDistribution}
Let $X$ be uniformly distributed on $\{ka+b\,,\,  k\in \llbracket1,N\rrbracket\}$ with $N>1$, 
$a\in\mathbb{R}\setminus\{0\}$ and $b\in \mathbb{R}$. 
Then $X$ is strictly sub-Gaussian, i.e. its optimal variance proxy equals its variance: 
\[
 \sigma^2_{\mathrm{opt}}[X]=\operatorname{Var}[X]=a^2\frac{N^2-1}{12}.
\]
\end{theorem}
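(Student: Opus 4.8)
The plan is to reduce to a normalized case and then apply the exponential-family / moment-based strategy alluded to in the introduction. First I would observe that $X = aX' + b$ for $X'$ uniform on $\llbracket 1,N\rrbracket$, and since the optimal variance proxy is invariant under translation and scales by $a^2$, it suffices to treat $X'$ (equivalently the centered version $Y$ uniform on the symmetric set $\{-(N-1)/2,\dots,(N-1)/2\}$, shifted by $1/2$ when $N$ is even). So I may assume $\mu = 0$ and $\operatorname{Var}[Y] = (N^2-1)/12 =: v$. The cumulant-generating function is $M_Y(\lambda) = \ln\bigl(\frac1N\sum_{k} e^{\lambda x_k}\bigr)$, a smooth function, and since the support is bounded we have $M_Y(\lambda) = o(\lambda^2)$ as $\lambda\to\pm\infty$ (in fact $M_Y(\lambda) \sim \tfrac{N-1}{2}|\lambda|$), so \Cref{AsymptInf} applies and $Y$ is sub-Gaussian. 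By \Cref{GeneralCharac} it then suffices to show $\mathcal{S}_c^* \subseteq (-\infty, v]$, i.e. that for every $\lambda_c \in \mathcal{L}_c^*$ one has $M_Y'(\lambda_c)/\lambda_c \le v$. Equivalently, I would show the stronger and cleaner statement that $g_Y(\lambda;v) \ge 0$ for all $\lambda$, i.e. that $v$ itself is already a variance proxy; strictness of the inequality $\sigma^2_{\mathrm{opt}} \ge \operatorname{Var}[Y]$ is automatic, so this gives equality.

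The core of the argument is to prove $g_Y(\lambda; v) = \tfrac12 v\lambda^2 - M_Y(\lambda) \ge 0$. Since $Y$ is symmetric, $g_Y(\cdot;v)$ is even and it suffices to work on $\mathbb{R}_+$, with $g_Y(0;v) = g_Y'(0;v) = 0$. The natural route is to study $g_Y''(\lambda;v) = v - M_Y''(\lambda)$, where $M_Y''(\lambda) = \operatorname{Var}(Y_\lambda)$ is the variance of $Y$ under the tilted (exponential-family) measure $\P_\lambda(Y = x_k) \propto e^{\lambda x_k}$. The key monotonicity fact I would establish is that $\lambda \mapsto \operatorname{Var}(Y_\lambda)$ is \emph{decreasing} on $\mathbb{R}_+$: tilting toward the right end of the support concentrates mass near the extreme atom and shrinks the variance. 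Granting this, $M_Y''(\lambda) \le M_Y''(0) = v$ for all $\lambda \ge 0$, hence $g_Y''(\lambda;v) \ge 0$ on $\mathbb{R}_+$, so $g_Y'(\cdot;v)$ is nondecreasing, hence nonnegative (as $g_Y'(0;v)=0$), hence $g_Y(\cdot;v)$ is nondecreasing, hence nonnegative — exactly the pattern used for the symmetric 3-mass case with $p \ge \tfrac16$. (As a bonus, $g_Y''$ then has at most one zero on $\mathbb{R}_+^*$, so \Cref{Prop01Zeros} would furnish the conclusion directly once the monotonicity of the tilted variance is known.)

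The main obstacle is therefore proving that the tilted variance $\operatorname{Var}(Y_\lambda)$ is monotone in $\lambda$ on $\mathbb{R}_+$. There are two ways I would attempt this. The cleaner one: compute $\tfrac{d}{d\lambda}\operatorname{Var}(Y_\lambda) = M_Y'''(\lambda) = \E[(Y_\lambda - \E Y_\lambda)^3]$, the third central moment of the tilted law, and show it is $\le 0$ for $\lambda \ge 0$. For the equally spaced grid the tilted law is (a translate/scaling of) a \emph{truncated geometric} distribution on $\{0,1,\dots,N-1\}$ with ratio $q = e^{\lambda} \ge 1$; one must show such a distribution has nonpositive skewness when $q \ge 1$ (equivalently nonnegative skewness for $q\le 1$, and the symmetric point $q=1$ gives skewness $0$). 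This is a concrete one-variable inequality in $q$ and $N$ that can be handled either by an explicit closed form for the first three moments of the truncated geometric (summing geometric series and differentiating) and factoring the resulting polynomial in $q$, or by a coupling/interlacing argument comparing $Y_\lambda$ with its reflection. The alternative route, avoiding third moments: show directly that $\lambda \mapsto M_Y''(\lambda)$ is decreasing on $\mathbb R_+$ using the covariance identity $M_Y''(\lambda) = \tfrac12\sum_{j,k}(x_j-x_k)^2 w_j(\lambda) w_k(\lambda)$ with $w_k(\lambda) \propto e^{\lambda x_k}$, and an exchange/rearrangement argument exploiting that increasing $\lambda$ monotonically reweights pairs toward the smaller gaps near the right edge; this is essentially an FKG-type correlation inequality on the grid. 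I expect the truncated-geometric skewness computation to be the cleanest to write, so that is the step I would develop in full in the appendix, after which the rest of the proof is the routine sign-chasing described above.
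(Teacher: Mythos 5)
Your plan is correct and follows essentially the same route as the paper: normalize to the uniform law on $\llbracket 1,N\rrbracket$, interpret $M_Y''$ and $M_Y'''$ as the variance and third central moment of the exponentially tilted (truncated geometric) family, prove the third derivative of the log-partition function is nonpositive on $\mathbb{R}_+$ so the tilted variance is maximized at $\lambda=0$, and conclude $g_Y(\cdot;\operatorname{Var}[Y])\ge 0$ by the usual sign-chasing (or \Cref{Prop01Zeros}). The one step you defer — the nonpositive skewness of the tilted law for $\lambda>0$ — is exactly the paper's \Cref{lemma:neg_third_deriv}, proved there by the explicit geometric-sum closed form for $u^{(3)}$ and monotonicity of $t\mapsto t(t+1)/(1-t)^3$ on $(1,\infty)$, i.e. precisely the computation you propose to carry out.
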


The proof of \Cref{TheoRDisceteDistribution} is postponed to \appendixref{appendix:Appendix_proof_th13}.

\section{Software implementation}\label{sec:software}

To support reproducibility and facilitate the use of both state-of-the-art and our theoretical results, we developed a Python package \footref{note1} 
to compute the optimal sub-Gaussian variance proxy for a wide range of probability distributions. The package handles both discrete and continuous distributions, including truncated Gaussian and Exponential laws.

The implementation follows a unified principle. Whenever a closed-form expression is available, it is returned directly; this is the case for Bernoulli and Binomial distributions, uniform and discrete uniform distributions (see \Cref{TheoRDisceteDistribution}), as well as certain symmetric families such as Beta, Kumaraswamy, Triangular \citep[see][]{arbel2020strict}, or 3-mass distributions (see \Cref{TheoSymmetric3mass}). When no closed-form can be derived, the computation relies on the general characterizations of the optimal variance proxy given in \sectionref{sec:charac} (see also \citep{arbel2020strict}), and proceeds via numerical methods. In particular, an adaptive grid search is combined with robust root-finding algorithms such as Brent’s method. This hybrid methodology ensures that both analytically tractable and intractable cases are encompassed within a single coherent framework.

The package primarily implements the characterization of the optimal variance proxy based on function $g_Y$ defined in Equation~\eqref{eq:g_Y} and based on the cumulant-generating function of \(Y-\mu\). Computing \(\sigma^2_{\mathrm{opt}}\) then reduces to solving the coupled system  
\[
g_Y(\lambda;\sigma^2)=0, \qquad g'_Y(\lambda;\sigma^2)=0,
\]  
as stated in \Cref{TheoRegime2}. This characterization is particularly effective for discrete distributions with few support points, such as the 3-mass distribution, where it provides a tractable criterion for identifying candidate values of the variance proxy.  

We also used this implementation to validate our theoretical results: the specialized method for the symmetric 3-mass case (see \Cref{TheoSymmetric3mass}) was checked against the asymmetric 3-mass case (see \Cref{TheoRegime2}) for the special scenario where \(p_1 = p_2\), and both approaches produced identical results. This provides an additional consistency check between the theoretical framework and the numerical implementation.  

The package provides utilities for visualization  of objective functions and supports batch analysis across multiple parameter settings, making it a practical companion for applied research in Bayesian inference, variational methods, and concentration bounds.

\section{Discussion}

In this work, we advance the understanding of the sub-Gaussian property for discrete distributions by deriving the optimal sub-Gaussian variance proxy for certain 3-mass distributions, in particular those with equally spaced support such as $\{-1,0,1\}$. We further extend the analysis to the uniform discrete distribution on $\{1,\ldots,N\}$ with equally spaced support.

Generalizing beyond these settings to distributions with non-equidistant support or with more than 3 mass points, appears essentially intractable in full generality for $N$-mass categorical laws. Nonetheless, other discrete families remain of substantial interest. In Bayesian nonparametrics, for instance, one often encounters discrete distributions, e.g. those arising from the Dirichlet process \citep{ferguson1973bayesian}, the Pitman--Yor process \citep{pitman1997two}, or Gibbs-type processes \citep{deblasi2015gibbs}. While concentration properties for such processes have been studied in the context of large deviations \citep[e.g.,][]{doss1982tails,feng2007large}, a more refined analysis of their tails via optimal variance proxies represents a promising direction for future research.


\section*{Acknowledgment}
Olivier Marchal used part of his IUF junior grant G752IUFMAR for this research, and Julyan Arbel was partially supported by ANR-21-JSTM-0001 grant.

\newpage

\newpage

\appendix


\section{Proofs for \sectionref{sec:charac}} \label{appendix:tech-lemmas}

In this section we prove \Cref{AsymptInf}, \Cref{GeneralCharac}, and \Cref{Prop01Zeros}. We also establish two lemmas,  \Cref{LemmaNumber} and \Cref{LemmaNumber2},  which are useful to prove \Cref{LemmaMerged}.

\begin{proofof}{\Cref{AsymptInf}} It is obvious from the definition of $g_Y(\lambda;\sigma^2)=\frac{1}{2}\lambda^2\sigma^2-M_Y(\lambda)$. The sufficient condition is also immediate since if $M$ is a bound for $Y$ then
$|M_Y(\lambda)|\leq \lambda|M+\mu|=o(\lambda^2)$. The fact that $Y$ is sub-Gaussian follows from that there exists $C>0$ such that $|M_Y(\lambda)|\leq C\lambda^2$ for all $\lambda\in \mathbb{R}$. Thus, taking $\frac{C}{2}$ immediately gives that $\sigma^2$ is a variance proxy so that $Y$ is sub-Gaussian.
\end{proofof}


\begin{proofof}{\Cref{GeneralCharac}}
Let us first mention that \Cref{AsymptInf} implies that the optimal variance proxy is well-defined. Then let us consider $\sigma^2>\max\{ \operatorname{Var}[Y],\; \sup\mathcal{S}_c^* \}$. Since $\sigma^2>\operatorname{Var}[Y]$, we get that $g_Y(.;\sigma^2)$ is locally convex and non-negative around $\lambda=0$.  Let us consider $\lambda_m(\sigma)\neq 0$ a local minimum of $g_Y(.,\sigma^2)$. For simplicity we shall assume that $\lambda_m(\sigma)>0$ but a similar argument is valid if $\lambda_m(\sigma)<0$. Then we have $\partial_{\sigma}[g_Y(\lambda_m(\sigma);\sigma^2)]= g_Y'(\lambda_m(\sigma);\sigma^2) \partial_\sigma \lambda_m(\sigma) + \lambda_m(\sigma)^2\sigma= \lambda_m(\sigma)^2\sigma>0$. Assume by contradiction that $g_Y(\lambda_m(\sigma);\sigma^2)<0$ then increasing $\sigma$ would increase the value of $g_Y(\lambda_m(\sigma);\sigma^2)$. Since $\partial_{\sigma}[g_Y(\lambda_m(\sigma);\sigma^2)]=\lambda_m(\sigma)^2\sigma>0$ there are only two cases: \\

\noindent\textbf{First}, $\lambda_m(\sigma)$ remains outside a positive neighborhood of $0$ denoted $(0,\epsilon)$ when we increase $\sigma$ and thus since $\partial_{\sigma}[g_Y(\lambda_m(\sigma);\sigma^2)]>\epsilon^2 \sigma$ and by assumption $g_Y(\lambda_m(\sigma);\sigma^2)<0$, there exists a value $\sigma_1>\sigma$ for which $g_Y(\lambda_m(\sigma_1);\sigma_1^2)=0$ which is a contradiction because $\sigma_1^2\in \mathcal{S}_c^*$ so that we should have $\sigma\geq \sigma_1$.\\

\noindent\textbf{Second}, $\lambda_m(\sigma)\to 0_+$ when we increase $\sigma$. In this case this is a contradiction because $g_Y(.,\sigma^2)$ is locally positive and convex in a positive neighborhood of $0$. Indeed, $g_Y(.,\sigma^2)$ must reach a positive local maximum on $(0,\lambda_m(\sigma))$ that we denote $\lambda_{\text{max}}(\sigma)$. By Rolle's theorem on $(0,\lambda_{\text{max}}(\sigma))$ and $(\lambda_{\text{max}}(\sigma),\lambda_m(\sigma))$, there exist at least two distinct values $(\lambda_1(\sigma),\lambda_2(\sigma))$ with $0<\lambda_1(\sigma)<\lambda_{\text{max}}(\sigma)<\lambda_2(\sigma)<\lambda_m(\sigma)$ such that $g_Y''(\lambda_1(\sigma);\sigma^2)=g_Y''(\lambda_2(\sigma);\sigma^2)=0$. Since $\lambda_m(\sigma)\to 0_+$, we must also have $\lambda_1(\sigma),\lambda_2(\sigma)\to 0_+$ and hence by continuity $g_Y''(0,\sigma^2)\to 0$. But this is impossible since $g_Y''(0;\sigma^2)=(\sigma^2-\operatorname{Var}[Y])>0$ is a positive, increasing function of $\sigma$. \\

Thus we conclude that for any $\sigma^2>\max\{\operatorname{Var}[Y],\sup\mathcal{S}_c^*\}$, all local minima of $g_Y(.;\sigma^2)$ are non-negative so that since $\underset{\lambda\to \pm \infty}{\lim} g_Y(\lambda;\sigma^2)=+\infty$, $g_Y(.;\sigma^2)$ is non-negative on $\mathbb{R}$. Hence $\sigma^2$ is a variance proxy and thus $\sigma_{\mathrm{opt}}^2\leq \max\{\operatorname{Var}[Y],\sup\mathcal{S}_c^*\}$.

\medskip

\sloppy{Let us prove the converse inequality and assume that $\sigma_{\mathrm{opt}}^2< \max\{\operatorname{Var}[Y],\sup\mathcal{S}_c^*\}$. It is well-known that $\operatorname{Var}[Y]$ is always a lower bound for the optimal variance proxy, thus the last inequality is only possible if  $\operatorname{Var}[Y]<\sigma_{\mathrm{opt}}^2< \sup\mathcal{S}_c^*$.} Thus, there exists $s_c\in \mathcal{S}_c^*$ such that $\sigma_{\mathrm{opt}}^2<s_c$, i.e. there exists $\lambda_c\in \mathbb{R}^*$ such that $g_Y(\lambda_c;s_c)=g_Y'(\lambda_c;s_c)=0$ and $\lambda_c$ is a local minimum of $g_Y(.;s_c)$. We have from the fact that the dependence of $g_Y$ relatively to $\sigma$ is quadratic that:
\beq g_Y(\lambda_c,\sigma^2)=g_Y(\lambda_c,s_c) +\frac{1}{2}\lambda_c^2 (\sigma^2-s_c)=\frac{1}{2}\lambda_c^2 (\sigma^2-s_c).\eeq
so that $g_Y(\lambda_c,\sigma^2)<0$ when $\sigma^2<s_c$. This implies that $g_Y(.;\sigma^2)$ is no longer non-negative when $\sigma^2<s_c$ so that $\sigma^2$ is not a variance proxy. This contradicts the fact that $\sigma_{\mathrm{opt}}^2<s_c$ is an optimal variance proxy.
\end{proofof}
\begin{proofof}{\Cref{Prop01Zeros}}
    If $g_Y''(.;\sigma^2)$ has no zero on $\mathbb{R}_+^*$, then it is strictly convex on $\mathbb{R}_+^*$ and since $g_Y(0;\sigma^2)=g_Y'(0;\sigma^2)=0$, $g_Y(.;\sigma^2)$ is positive on $\mathbb{R}_+^*$. Similarly, if $g_Y''(.;\sigma^2)$ has a unique zero on $\mathbb{R}_+^*$, then it cannot change sign at this zero, because $g_Y''(0;\sigma^2)=\sigma^2-\operatorname{Var}[Y]\geq 0$ and $\underset{\lambda\to +\infty}{\lim}g_Y''(\lambda;\sigma^2)=\sigma^2>0$. Hence, we conclude similarly that $g_Y(.;\sigma^2)$ is positive on $\mathbb{R}_+^*$. 
\end{proofof}

\begin{lemma}[{Local minimum when $g_Y''(.;\sigma^2)$ has two positive zeros and $\sigma^2>\text{Var}[Y]$}]
\label{LemmaNumber}
Assume that $M_Y$ is a smooth function and that $M_Y(\lambda)\overset{\lambda\to \pm\infty}{=}o(\lambda^2)$. Moreover, assume that for any $\sigma^2\in \left(\operatorname{Var}[Y],+\infty\right)$, $g_Y''(.;\sigma^2)$ has exactly two positive zeros $(\lambda_1(\sigma),\lambda_2(\sigma))$ such that $0<\lambda_1(\sigma)<\lambda_2(\sigma)$. Then, $g_Y(.;\sigma^2)$ has at most one local minimum  $\lambda_m(\sigma)$ on $\mathbb{R}_+^*$ and it is necessarily located in $\lambda_m(\sigma)\in\left(\lambda_1(\sigma),\lambda_2(\sigma)\right)$.
Consequently, the equations $g_Y(\lambda,\sigma^2)=0=g_Y'(\lambda,\sigma^2)$ have at most one solution on $\mathbb{R}_+^*\times \left(\operatorname{Var}[Y],+\infty\right)$ and we have that:
\begin{itemize}
    \item if they have no solution, then $g_Y(.;\sigma^2)$ is non-negative on $\mathbb{R}_+$ for any $\sigma^2>\operatorname{Var}[Y]$.
    \item if they have one solution $(\lambda_c,\sigma_c^2)$, then $\lambda_c>0$ is necessarily a local minimum and $g_Y(.;\sigma^2)$ is non-negative on $\mathbb{R}_+$ if and only if $\sigma^2\geq \sigma_c^2$. 
\end{itemize}
A similar result is valid on $\mathbb{R}_-^*$. 
\end{lemma}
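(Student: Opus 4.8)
The plan is to exploit the sign structure of $g_Y''(.;\sigma^2)$ that follows from the two-zero hypothesis, together with the boundary conditions $g_Y(0;\sigma^2)=g_Y'(0;\sigma^2)=0$ and $g_Y''(0;\sigma^2)=\sigma^2-\operatorname{Var}[Y]>0$ and the asymptotics $g_Y(\lambda;\sigma^2)\to+\infty$, $g_Y'(\lambda;\sigma^2)\to+\infty$ from \Cref{AsymptInf}. First I would determine the sign of $g_Y''(.;\sigma^2)$ on each of the three intervals $(0,\lambda_1(\sigma))$, $(\lambda_1(\sigma),\lambda_2(\sigma))$, $(\lambda_2(\sigma),+\infty)$: since $g_Y''(0;\sigma^2)>0$ and $g_Y''(+\infty;\sigma^2)=\sigma^2>0$, and there are exactly two zeros at which $g_Y''$ must actually change sign, the sign pattern is forced to be $+,-,+$. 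Hence $g_Y'(.;\sigma^2)$ is strictly increasing on $(0,\lambda_1(\sigma))$, strictly decreasing on $(\lambda_1(\sigma),\lambda_2(\sigma))$, strictly increasing on $(\lambda_2(\sigma),+\infty)$.

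Next I would analyze $g_Y'(.;\sigma^2)$. Starting from $g_Y'(0;\sigma^2)=0$ and increasing on $(0,\lambda_1)$, it is positive on $(0,\lambda_1]$ with a local max at $\lambda_1$; it then decreases on $(\lambda_1,\lambda_2)$, reaching a local min at $\lambda_2$; then it increases to $+\infty$. So $g_Y'$ can vanish on $\mathbb{R}_+^*$ only in the decreasing stretch $(\lambda_1,\lambda_2)$ or possibly at/after $\lambda_2$ in the final increasing stretch — but if it is $\le 0$ at $\lambda_2$ it has exactly one more zero in $(\lambda_2,+\infty)$, and the only sign change of $g_Y'$ from $+$ to $-$ (i.e.\ the only local maximum of $g_Y$ other than via the trivial origin behavior) happens inside $(\lambda_1,\lambda_2)$. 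Carefully: $g_Y$ is increasing on $(0,\lambda_1)$; on $(\lambda_1,\lambda_2)$, $g_Y'$ is decreasing so $g_Y$ is concave there and may turn from increasing to decreasing at a single point; on $(\lambda_2,+\infty)$, $g_Y'$ is increasing so $g_Y$ is convex and may turn from decreasing back to increasing at a single point. Thus $g_Y(.;\sigma^2)$ on $\mathbb{R}_+^*$ has at most one local maximum (located in $(\lambda_1(\sigma),\lambda_2(\sigma))$ if it exists) and at most one local minimum, and when the local minimum $\lambda_m(\sigma)$ exists it lies in $(\lambda_2(\sigma),+\infty)$; since the statement only asserts $\lambda_m(\sigma)\in(\lambda_1(\sigma),\lambda_2(\sigma))$ I would double-check the interval claim against the paper's sign conventions, but in either case uniqueness of the local minimum is what matters. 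A solution of $g_Y(\lambda,\sigma^2)=0=g_Y'(\lambda,\sigma^2)$ with $\lambda>0$ must, since $g_Y>0$ near $0$ and $g_Y\to+\infty$, occur at a point where $g_Y$ touches zero from above, i.e.\ a local minimum with value $0$; by the uniqueness just established there is at most one such $(\lambda_c,\sigma_c^2)$.

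For the dichotomy: if no solution exists, then for every $\sigma^2>\operatorname{Var}[Y]$ the unique candidate local minimum of $g_Y(.;\sigma^2)$ on $\mathbb{R}_+^*$ (if it exists at all) has nonzero value; combined with $g_Y>0$ near $0$ and $g_Y\to+\infty$, a negative value anywhere would force a local minimum with negative value, hence (by continuity in $\sigma$, using $\partial_\sigma g_Y(\lambda_m(\sigma);\sigma^2)=\lambda_m(\sigma)^2\sigma>0$ as in the proof of \Cref{GeneralCharac}) the existence of a $\sigma$ at which the local minimum value is exactly zero — a solution, contradiction. Hence $g_Y(.;\sigma^2)\ge 0$ on $\mathbb{R}_+$. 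If one solution $(\lambda_c,\sigma_c^2)$ exists, then using the quadratic-in-$\sigma$ identity $g_Y(\lambda_c;\sigma^2)=g_Y(\lambda_c;\sigma_c^2)+\tfrac12\lambda_c^2(\sigma^2-\sigma_c^2)=\tfrac12\lambda_c^2(\sigma^2-\sigma_c^2)$ shows $g_Y(\lambda_c;\sigma^2)<0$ for $\sigma^2<\sigma_c^2$, so $g_Y$ is not non-negative there; conversely for $\sigma^2\ge\sigma_c^2$ the monotonicity in $\sigma$ of the local-minimum value (again $\partial_\sigma g_Y(\lambda_m(\sigma);\sigma^2)>0$) plus the no-other-local-minimum structure forces $g_Y(.;\sigma^2)\ge 0$ on $\mathbb{R}_+$. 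The statement for $\mathbb{R}_-^*$ follows by the identical argument applied to $\lambda\mapsto g_Y(-\lambda;\sigma^2)$, which satisfies the same hypotheses with $M_Y$ replaced by $\lambda\mapsto M_Y(-\lambda)$.

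The main obstacle I expect is the bookkeeping around the converse direction of the ``if and only if'' — namely showing that for $\sigma^2\ge\sigma_c^2$ the function really stays non-negative. The delicate point is that as $\sigma$ varies the location $\lambda_m(\sigma)$ of the local minimum moves, and one must rule out the local minimum ``escaping to $0_+$'' (which would contradict local convexity near $0$, exactly the argument used in the proof of \Cref{GeneralCharac}) or a new local minimum appearing; the two-zero hypothesis on $g_Y''$ is precisely what prevents the latter, but making this rigorous requires a continuity/connectedness argument in $\sigma$ rather than a pointwise one. Everything else is a routine sign chase.
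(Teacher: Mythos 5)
Your proposal is correct and follows essentially the same route as the paper's proof: the $+,-,+$ sign analysis of $g_Y''$, the resulting variations of $g_Y'$ and $g_Y$ (unique local maximum, unique local minimum on $\mathbb{R}_+^*$), and the $\sigma$-monotonicity $\partial_\sigma\big[g_Y(\lambda_m(\sigma);\sigma^2)\big]=\sigma\,\lambda_m(\sigma)^2>0$ together with the quadratic-in-$\sigma$ identity to get the dichotomy and the uniqueness of $(\lambda_c,\sigma_c^2)$. Your hesitation about the interval is well founded: since $g_Y''(\cdot;\sigma^2)<0$ on $\left(\lambda_1(\sigma),\lambda_2(\sigma)\right)$, the unique local minimum necessarily lies in $\left(\lambda_2(\sigma),+\infty\right)$, so the location $(\lambda_1(\sigma),\lambda_2(\sigma))$ in the statement (and the ordering $\lambda_l(\sigma)<\lambda_r(\sigma)<\lambda_2(\sigma)$ in the paper's own proof) is a slip that does not affect what is used downstream, and the only small omission on your side is the degenerate case where $g_Y''$ does not change sign at its two zeros (then $g_Y'$ is nondecreasing, $g_Y\ge 0$ on $\mathbb{R}_+$, and there are no critical points), which the paper treats explicitly.
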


\begin{proofof}{\Cref{LemmaNumber}} The proof follows from a precise study of variations. Indeed, let us first notice that for any $\sigma^2>\operatorname{Var}[Y]$, we have that $g_Y(.;\sigma^2)$ is locally convex and positive around $\lambda=0$ since $g_Y''(0;\sigma^2)=\sigma^2-\operatorname{Var}[Y]>0$. We also remind that $\underset{\lambda\to \pm \infty}{\lim} g_Y''(\lambda;\sigma^2)=\sigma^2>0$ so that $g_Y(.;\sigma^2)$ is also convex at infinity. Thus, from the assumption that $g_Y''(.;\sigma^2)$ has exactly two zeros $(\lambda_1(\sigma),\lambda_2(\sigma))$ on $\mathbb{R}_+^*$, we conclude that either $g_Y''(.;\sigma^2)$ does not change sign at its zeros and in this case, $g_Y(.;\sigma^2)$ is strictly convex on $\mathbb{R}_+^*$ with $g_Y(0;\sigma^2)=0$ so that it is positive and increasing on $\mathbb{R}_+$, thus there is no solution of $g_Y'(.;\sigma^2)=0$ on $\mathbb{R}_+^*$. Or that $g_Y''(.;\sigma^2)$ is necessarily positive on $(0,\lambda_1(\sigma))$, negative on $(\lambda_1(\sigma),\lambda_2(\sigma))$ and positive on $(\lambda_2(\sigma),+\infty)$. Thus, $g_Y'(.;\sigma^2)$ is increasing on $(0,\lambda_1(\sigma))$ and since $g_Y'(0;\sigma^2)=0$ it is positive on $(0,\lambda_1(\sigma))$. Then, $g_Y'(.;\sigma^2)$ is decreasing on $(\lambda_1(\sigma),\lambda_2(\sigma))$ and then increasing on $(\lambda_2(\sigma),+\infty)$. In particular, $g_Y'(.;\sigma^2)$ admits a unique local minimum at $\lambda=\lambda_2(\sigma)$ on $\mathbb{R}_+^*$. Consequently, if $g_Y'(\lambda_2(\sigma);\sigma^2)\geq 0$, then $g_Y(.;\sigma^2)$ is non-negative on $\mathbb{R}_+$ and thus since $g_Y(0;\sigma^2)=0$, we get that $g_Y(.;\sigma^2)$ is non-negative on $\mathbb{R}_+$. Alternatively, if $g_Y'(\lambda_2(\sigma);\sigma^2)<0$, then $g_Y'(.;\sigma^2)$ has exactly two zeros $\lambda_1(\sigma)<\lambda_l(\sigma)<\lambda_r(\sigma)<\lambda_2(\sigma)$ and it is negative on $(\lambda_l(\sigma),\lambda_r(\sigma))$ and positive on $(0,\lambda_l(\sigma))\cup(\lambda_r(\sigma),+\infty)$. Consequently, $g_Y(.;\sigma^2)$ has exactly one local minimum on $\mathbb{R}_+^*$ denoted $\lambda_m(\sigma)$ which is necessarily located in  $(\lambda_1(\sigma),\lambda_2(\sigma))$.
Next, let us observe that:
\beq \partial_\sigma[g_Y'(\lambda_2(\sigma);\sigma^2)]=g_Y''(\lambda_2(\sigma);\sigma^2) \partial_\sigma[\lambda_2(\sigma)]+ 2\sigma\lambda_2(\sigma)=2\sigma\lambda_2(\sigma)>0.\eeq
Thus, the local minimum of $g_Y'(.;\sigma^2)$ is an increasing function of $\sigma$, so that if it is null for a value $\sigma_1$, then it is positive for $\sigma>\sigma_1$ and negative for $\sigma<\sigma_1$.
Finally, let us observe that
\beq  \partial_\sigma[g_Y(\lambda_m(\sigma);\sigma^2)]=g_Y'(\lambda_m(\sigma);\sigma^2) \partial_\sigma[\lambda_m(\sigma)]+ \sigma\lambda_m(\sigma)^2=\sigma\lambda_m(\sigma)^2>0 .\eeq
so that $\lambda_m(\sigma)$ is an increasing function of $\sigma$. Let us denote $(\lambda_c,\sigma_c^2)\in \mathbb{R}_+^*\times \left(\operatorname{Var}[Y],+\infty\right)$ a solution of $g_Y(\lambda,\sigma^2)=0=g_Y'(\lambda,\sigma^2)$, then for $\sigma<\sigma_c$, we have $g_Y(\lambda_m(\sigma);\sigma^2)<0$ while for $\sigma>\sigma_c$, we have $g_Y(\lambda_m(\sigma);\sigma^2)>0$. Since $g_Y(.;\sigma^2)$ has only a unique local minimum $\lambda_m(\sigma)$ and a local maximum on $\mathbb{R}_+^*$ which is always strictly positive, we conclude that we cannot have another solution of $g_Y(\lambda,\sigma^2)=0=g_Y'(\lambda,\sigma^2)$ with $\lambda\in \mathbb{R}_+^*$. When there is no solution, we have $g_Y(\lambda_m(\sigma);\sigma^2)>0$ so that $g_Y(.;\sigma^2)$ is non-negative on $\mathbb{R}_+^*$. When we have a unique solution $(\lambda_c,\sigma_c^2)$, then $g_Y(\lambda_m(\sigma);\sigma^2)>0$ for $\sigma>\sigma_c$, $g_Y(\lambda_m(\sigma_c);\sigma_c^2)=0$ and $g_Y(\lambda_m(\sigma);\sigma^2)<0$ for $\sigma<\sigma_c$ concluding the proof. 
\end{proofof}

We complement \Cref{LemmaNumber} with another lemma regarding the situation when $\sigma<\operatorname{Var}[Y]$.

\begin{lemma}[{Local minimum when $g_Y''(.;\sigma^2)$ has two positive zeros and $\sigma^2<\operatorname{Var}[Y]$}]\label{LemmaNumber2}
Assume that $M_Y$ is a smooth function and that $M_Y(\lambda)\overset{\lambda\to \pm\infty}{=}o(\lambda^2)$ and that for any $\sigma^2\in \left(0,\operatorname{Var}[Y]\right)$, $g_Y''(.;\sigma^2)$ has exactly two zeros $(\lambda_1(\sigma),\lambda_2(\sigma))$ such that $0<\lambda_1(\sigma)<\lambda_2(\sigma)$. Then, there is no solution to the set of equations $g_Y(\lambda;\sigma^2)=0=g_Y'(\lambda;\sigma^2)$ with $\lambda>0$ and $\sigma^2\in \left(0,\operatorname{Var}[Y]\right)$.\\
A similar result holds on $\mathbb{R}_-^*$.
\end{lemma}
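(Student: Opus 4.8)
The plan is to exploit that, for $\sigma^2\in(0,\operatorname{Var}[Y])$, the function $g_Y(\,\cdot\,;\sigma^2)$ is \emph{concave} near the origin rather than convex: since $g_Y''(0;\sigma^2)=\sigma^2-\operatorname{Var}[Y]<0$ while $\lim_{\lambda\to+\infty}g_Y''(\lambda;\sigma^2)=\sigma^2>0$ by \Cref{AsymptInf}, the second derivative changes sign from negative to positive on $\mathbb{R}_+^*$. I would show that this, together with the two-zero hypothesis, forces $g_Y(\,\cdot\,;\sigma^2)$ to be strictly decreasing then strictly increasing on $\mathbb{R}_+^*$, hence to have a single, strictly negative minimum; consequently $g_Y(\,\cdot\,;\sigma^2)$ vanishes at exactly one point $\lambda_z>0$ and there $g_Y'(\lambda_z;\sigma^2)>0$, so the system $g_Y=g_Y'=0$ admits no solution with $\lambda>0$.

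First I would pin down the sign pattern of $g_Y''(\,\cdot\,;\sigma^2)$ on $\mathbb{R}_+^*$. Being negative at $0$ and positive at $+\infty$, it must change sign an odd number of times; having exactly two zeros $0<\lambda_1(\sigma)<\lambda_2(\sigma)$, it therefore changes sign exactly once (two sign changes would make it negative at $+\infty$, a contradiction), say at $\lambda_s(\sigma)\in\{\lambda_1(\sigma),\lambda_2(\sigma)\}$, the other zero being a tangency at which the sign is preserved. In either admissible arrangement one obtains $g_Y''(\,\cdot\,;\sigma^2)\le 0$ on $(0,\lambda_s(\sigma))$ and $g_Y''(\,\cdot\,;\sigma^2)\ge 0$ on $(\lambda_s(\sigma),+\infty)$, with equality at most at one isolated point in each piece.

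Then I would read off the consequences for $g_Y'$ and $g_Y$. Since $g_Y'(0;\sigma^2)=0$ and $g_Y'(\,\cdot\,;\sigma^2)$ is strictly decreasing on $(0,\lambda_s(\sigma))$ (an isolated tangency of $g_Y''$ does not spoil strict monotonicity) and strictly increasing on $(\lambda_s(\sigma),+\infty)$ with $g_Y'(\lambda;\sigma^2)\to+\infty$ by \Cref{AsymptInf}, the derivative $g_Y'(\,\cdot\,;\sigma^2)$ has a unique zero $\lambda_m(\sigma)>\lambda_s(\sigma)$, is negative on $(0,\lambda_m(\sigma))$ and positive on $(\lambda_m(\sigma),+\infty)$. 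Hence $g_Y(\,\cdot\,;\sigma^2)$ is strictly decreasing on $(0,\lambda_m(\sigma))$---so, using $g_Y(0;\sigma^2)=0$, strictly negative on $(0,\lambda_m(\sigma)]$---and strictly increasing on $(\lambda_m(\sigma),+\infty)$ with limit $+\infty$. It therefore has a single zero $\lambda_z(\sigma)>\lambda_m(\sigma)$, at which $g_Y'(\lambda_z(\sigma);\sigma^2)>0$. Thus no $\lambda>0$ can satisfy $g_Y(\lambda;\sigma^2)=g_Y'(\lambda;\sigma^2)=0$. The statement on $\mathbb{R}_-^*$ follows by applying the same argument to $-Y$.

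The only delicate point is the sign-pattern step: one must allow the hypothesis ``exactly two zeros'' to include the degenerate case in which one of them is a double zero (tangency) of $g_Y''$, and verify that passing through such an isolated point keeps $g_Y'$ strictly monotone on each of the two pieces. This is elementary, and in fact the analysis is easier than in \Cref{LemmaNumber}: because $g_Y(\,\cdot\,;\sigma^2)$ starts off concave and negative at the origin, there is no competing positive local maximum near $0$, so no Rolle-type bookkeeping is needed.
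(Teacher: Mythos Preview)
Your proposal is correct and follows essentially the same approach as the paper's proof: both exploit that $g_Y''(0;\sigma^2)=\sigma^2-\operatorname{Var}[Y]<0$ while $g_Y''(+\infty;\sigma^2)=\sigma^2>0$, deduce that among the two positive zeros of $g_Y''$ exactly one is a sign change, and then run the monotonicity chain $g_Y''\to g_Y'\to g_Y$ to conclude that the unique positive zero of $g_Y$ lies strictly to the right of the unique positive zero of $g_Y'$. The only difference is presentational: the paper splits explicitly into the two cases ``sign change at $\lambda_1(\sigma)$'' versus ``sign change at $\lambda_2(\sigma)$'', whereas you handle both at once by introducing $\lambda_s(\sigma)\in\{\lambda_1(\sigma),\lambda_2(\sigma)\}$ and noting that an isolated tangency does not spoil strict monotonicity of $g_Y'$.
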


\begin{proofof}{\Cref{LemmaNumber2}} For any $\sigma^2<\operatorname{Var}[Y]$, we have that $g_Y(\lambda\sigma^2)=(\sigma^2-\operatorname{Var}[Y])\lambda^2+o(\lambda^2)$ when $\lambda\to 0_+$. Thus, $g_Y(.;\sigma^2)$ is locally concave and negative around $\lambda=0_+$. Since  $M_Y(\lambda)\overset{\lambda\to \pm\infty}{=}o(\lambda^2)$, we know that $g_Y(.;\sigma^2)$ is locally convex and positive when $\lambda\to +\infty$. Since $g_Y''(.;\sigma^2)$ is assumed to have exactly two positive zeros $\lambda_1(\sigma)<\lambda_2(\sigma)$, we may only have the following cases:
\begin{itemize}
    \item $g''_Y(.;\sigma^2)$ is negative on $(0,\lambda_1(\sigma))$ and changes sign at $\lambda_1(\sigma)$. Thus, it is positive on $(\lambda_1(\sigma),\lambda_2(\sigma))$ and cannot change sign at $\lambda_2(\sigma)$ to remain positive at $+\infty$. Hence, $g''_Y(.;\sigma^2)$ is positive on $(\lambda_1(\sigma),\lambda_2(\sigma))\cup (\lambda_2(\sigma),+\infty)$. Consequently, $g'_Y(.;\sigma^2)$ is decreasing on $(0,\lambda_1(\sigma))$ and increasing on $(\lambda_1(\sigma),+\infty)$. Since $g'_Y(0;\sigma^2)$, we have $g'_Y(\lambda_1(\sigma);\sigma^2)<0$ and since $g'_Y(+\infty;\sigma^2)=+\infty$, $g'_Y$ has only one zero $\lambda_0(\sigma)$ on $\mathbb{R}_+^*$ that satisfies $\lambda_0(\sigma)>\lambda_1(\sigma)$. Moreover, $g'_Y(.;\sigma^2)$ is negative on $(0,\lambda_0(\sigma))$ and positive on $(\lambda_0(\sigma),+\infty)$. Hence, $g_Y(.;\sigma^2)$ is decreasing on $(0,\lambda_0(\sigma))$ and increasing on $(\lambda_1(\sigma),+\infty)$. Since $g_Y(0;\sigma^2)=0$ and $g_Y(+\infty;\sigma^2)=+\infty$, $g_Y$ admits a unique zero $\lambda_*(\sigma)$ on $\mathbb{R}_+^*$ and we have $\lambda_*(\sigma)>\lambda_0(\sigma)$. Hence, there are no simultaneous solutions to $g_Y(\lambda;\sigma^2)=0=g_Y'(\lambda;\sigma^2)$ with $\lambda>0$.
    \item $g''_Y(.;\sigma^2)$ is negative on $(0,\lambda_1(\sigma))$ and does not change sign at $\lambda_1(\sigma)$ so it is negative on $(0,\lambda_2(\sigma))$. In order to be positive at $\lambda\to +\infty$, $g_Y''(.;\sigma^2)$ must change sign at $\lambda=\lambda_2(\sigma)$. Thus, $g_Y'(.;\sigma^2)$ is decreasing on $(0,\lambda_2(\sigma))$ and increasing on $(\lambda_2(\sigma),+\infty)$. Since $g_Y'(0;\sigma^2)=0$ and $g_Y'(+\infty;\sigma^2)=+\infty$, $g_Y'(.;\sigma^2)$ admits a unique zero $\lambda_0(\sigma)$ on $\mathbb{R}_+^*$ and it satisfies $\lambda_0(\sigma)>\lambda_2(\sigma)$. Moreover, $g_Y'(.;\sigma^2)$ is negative on $(0,\lambda_0(\sigma))$ and positive on $(\lambda_0(\sigma),+\infty)$. Since $g_Y(0;\sigma^2)=0$ and $g_Y(+\infty;\sigma^2)=+\infty$, $g_Y(.,\sigma^2)$ is decreasing and negative on $(0,\lambda_0(\sigma))$ and increasing on $(\lambda_0(\sigma),+\infty)$. Hence, $g_Y(.;\sigma^2)$ admits a unique zero $\lambda_*(\sigma)$ on $\mathbb{R}_+^*$ and we have $\lambda_*(\sigma)>\lambda_0(\sigma)$. Hence, there are no simultaneous solutions to $g_Y(\lambda;\sigma^2)=0=g_Y'(\lambda;\sigma^2)$ with $\lambda>0$.
\end{itemize}
\end{proofof}

\section{Proofs for \sectionref{sec:3-mass}}\label{appendix:Appendix_proof_sec_3}

In this section we prove \Cref{TheoRegime1} and \Cref{lemma:lambda_ordering}.

\begin{proofof}{\Cref{TheoRegime1}}
Let us first observe that $\left(2\lambda_0,\sigma_{\mathrm{opt}}^2\right)=\left(-\ln\frac{p_2}{p_1},\sigma_{\mathrm{opt}}^2\right)$ is a solution $(\lambda,\sigma^2)$ of the system of equations
\beqq g_{\sigma,p_1,p_2}(\lambda)=0 \text{ and } g'_{\sigma,p_1,p_2}(\lambda) = 0.\eeqq
Moreover, the case $p_3\leq 4\sqrt{p_1p_2}$ is equivalent to the fact that $\Delta\leq 0$ or $\Delta>0$ with $P$ admitting two strictly negative roots (the sum of roots ($X_1,X_2)$ is $X_1+X_2 = \frac{p_3^2-8p_1p_2}{p_2p_3} < 0$ and the product of roots $X_1 X_2 = \frac{p_1}{p_2}>0$). Thus, the function $N_{p_1,p_2}$ is strictly positive on $\mathbb{R}$. Consequently, $g^{(3)}_{\sigma,p_1,p_2}$ and $u_1(\lambda)$ share the same sign and hence $g_{\sigma,p_1,p_2}^{(3)}$ is strictly negative on $(-\infty,\lambda_0)$ and strictly positive on $(\lambda_{0}, +\infty)$. Furthermore, since $\lim_{\lambda\to \pm \infty} g^{(2)}_{\sigma,p_1,p_2}(\lambda)=\sigma^2$, it follows that $\lambda_0$ is a global minimum of $g^{(2)}_{\sigma,p_1,p_2}$ with value $g^{(2)}_{\sigma,p_1,p_2}(\lambda_{0})=\sigma^2-\frac{2\sqrt{p_1p_2}}{p_3+2\sqrt{p_1p_2}}$. If $\sigma^2\geq\frac{2\sqrt{p_1p_2}}{p_3+2\sqrt{p_1p_2}}$ , then $g^{(2)}_{\sigma,p_1,p_2}$ is non-negative on $\mathbb{R}$ and so $g'_{\sigma,p_1,p_2}$ is a strictly increasing function. Since $g'_{\sigma,p_1,p_2}(0)=0$, $g'_{\sigma,p_1,p_2}$ is negative on $\mathbb{R}_-$ and positive on $\mathbb{R}_+$ and finally $g_{\sigma,p_1,p_2}$ has a global minimum at $\lambda=0$ which is precisely null so it is positive and $\sigma^2$ is a variance proxy. This gives that $\frac{2\sqrt{p_1p_2}}{p_3+2\sqrt{p_1p_2}}$ is an upper bound for the optimal variance proxy. 

On the contrary, if $\sigma^2<\frac{2\sqrt{p_1p_2}}{p_3+2\sqrt{p_1p_2}}$, then $g^{(2)}_{\sigma,p_1,p_2}$ has two distinct zeros $(\lambda_1(\sigma),\lambda_2(\sigma))$ such that $\lambda_1(\sigma)<\lambda_0<\lambda_2(\sigma)\leq 0$. Moreover, since $g_{\sigma,p_1,p_2}''(.;\sigma^2)$ is positive on $\mathbb{R}_+^*$, we get that $g_{\sigma,p_1,p_2}(.;\sigma^2)$ is always positive on $\mathbb{R}_+^*$ for any $\sigma^2\in \left(\operatorname{Var}[Y],\frac{2\sqrt{p_1p_2}}{p_3+2\sqrt{p_1p_2}} \right) $. Since, we have observed that the equations $g_{\sigma,p_1,p_2}(\lambda,\sigma^2)=0=g_{\sigma,p_1,p_2}'(\lambda,\sigma^2)$ admits $\left(2\lambda_0,\frac{2(p_2-p_1)}{\ln(p_2/p_1)}\right)\in \mathbb{R}_-^*\times \left(\operatorname{Var}[Y],+\infty\right)$ as solution, application of \Cref{LemmaMerged} on $\mathbb{R}_-^*$ implies that $g_{\sigma,p_1,p_2}(.;\sigma^2)$ is non-negative on $\mathbb{R}_-$ if and only if $\sigma^2\geq \frac{2(p_2-p_1)}{\ln(p_2/p_1)} $. Thus the optimal variance proxy in this case is $\sigma_{\mathrm{opt}}^2= \frac{2(p_2-p_1)}{\ln(p_2/p_1)}$. 
\end{proofof}

\begin{proofof}{\Cref{lemma:lambda_ordering}}
Let us first observe that we have:
\begin{align*}
    \lambda_{\pm}&= \ln\left(\frac{p_3^2-8p_1p_2 \pm \sqrt{(p_3^2-4p_1p_2)(p_3^2-16p_1p_2)}}{2p_1p_2}\right). 
\end{align*}
We shall denote for compactness $x := p_3^2>0$, $y := p_1p_2>0$ so that we have the condition $x>16y$. Since $\lambda_0<0$, we only need to prove that $\lambda_->0$. We will now prove that $\lambda_{-} > 0$ under the condition $x > 16y>0$. To establish this result, we proceed through a chain of equivalent inequalities, beginning with the definition of $\lambda_{-}$
\begin{align*}
\lambda_{-} > 0 
&\iff \frac{x - 8y - \sqrt{(x-4y)(x-16y)}}{2y} > 1 \\
&\iff x - 8y - \sqrt{(x-4y)(x-16y)} > 2y \\
&\iff (x - 10y)^2 > (x-4y)(x-16y) \quad (\text{because } x > 16y \Rightarrow x-10y > 6y > 0) \\
&\iff x^2 - 20xy + 100y^2 > x^2 - 20xy + 64y^2 \\
&\iff 100y^2 > 64y^2 \\
&\iff 36y^2 > 0 \quad (\text{always valid for  } y \neq 0).
\end{align*}
Thus we get  $\lambda_0 < 0 <\lambda_- <   \lambda_+$ under the condition $p_3^2>16p_1p_2$ ending the proof of the lemma.  
\end{proofof}

\section{Proofs for \sectionref{sec:uniform}}\label{appendix:Appendix_proof_th13}

By linearity of the log-MGF and the scaling property of variance proxies, it is convenient to normalize $X$. Define
\[
Y := \frac{X-b}{a}.
\]
Then $Y$ is uniformly distributed on $\llbracket 1,N\rrbracket$ and
\[
\sigma_{\mathrm{opt}}[X] = |a|\,\sigma_{\mathrm{opt}}[Y].
\]
Hence, without loss of generality, we assume $a=1$ and $b=0$. Under this assumption, the variable $Y$   uniformly distributed on the integer set  $\{1, 2, \ldots, N\}$ with moments:
\beq
  \mu:=\E[Y]=\frac{N+1}{2}, \quad  \sigma^2:=\operatorname{Var}[Y]=\frac{N^2-1}{12}, \quad \operatorname{\kappa_3}[Y] := \E[(Y-\E[Y])^3]=0.
\eeq

By definition, $\sigma>0$ is a variance proxy of $Y$ if and only if \beq \E\left[e^{\lambda Y}\right]= \frac{1}{N}\sum_{k=1}^{N} e^{\lambda k} 
\;\leq\; \exp\!\left(\tfrac{\lambda^2\sigma^2}{2}+\lambda \mu\right), \quad \forall\, \lambda\in \mathbb{R}. \eeq
Equivalently, defining the log-partition function
\[
u(\lambda) := \ln\!\left(\frac{1}{N}\sum_{k=1}^{N} e^{\lambda k}\right),
\]
this condition becomes equivalent to the non-negativity of
\[
g_{\sigma,N}(\lambda) := \tfrac{\lambda^2\sigma^2}{2} - u(\lambda) + \lambda \mu \;\geq 0, 
\qquad \forall \,\lambda \in \mathbb{R}.
\]
It is known that the variance is a universal lower bound for variance proxies. 
Hence in our analysis we consider only $\sigma^2 \geq \operatorname{Var}[Y]= \frac{N^2-1}{12}$.\\

To characterize the optimal variance proxy, it is essential to study the properties of the function $g_{\sigma, N}$. Observe that $g_{\sigma, N}$ is a smooth function of $(\sigma, \lambda)\in \mathbb{R}^2$. Its first three derivatives with respect to $\lambda$ are explicitly computed as:
\beq 
g'_{\sigma,N}(\lambda)   = \lambda \sigma^2 - u'(\lambda) + \mu \,,\,
g^{(2)}_{\sigma,N}(\lambda) = \sigma^2 - u^{(2)}(\lambda)\,,\, 
g^{(3)}_{\sigma,N}(\lambda) = - u^{(3)}(\lambda).
\eeq
To analyze the log-partition function $u$, and consequently those of $g_{\sigma, N}$ it is convenient to introduce an auxiliary family of probability distributions. For every real $\lambda$, define the probability distribution $P_\lambda$ on $\{1,\dots,N\}$ by
\[
P_\lambda(k) = \frac{e^{\lambda k}}{\underset{j=1}{\overset{N}{\sum}} e^{\lambda j}}, \qquad \forall\, k\in \llbracket 1, N\rrbracket.
\]
Let $Z_\lambda \sim P_\lambda$ denote the associated random variable.  
Note that $Z_0=Y$ coincides with the uniform distribution. This family of probability distributions satisfies several fundamental identities, including symmetry properties and a moment derivative formula.
\paragraph{Symmetry identities.}  
For all $\lambda \in \mathbb{R}$:
\beq P_{-\lambda}(k) = P_\lambda(N-k+1), \quad  \E[Z_{-\lambda}] = N+1-\E[Z_\lambda], \quad
\operatorname{Var}[Z_{-\lambda}] = \operatorname{Var}[Z_\lambda]. \eeq

\paragraph{Moment derivative identity.}  
For all integers $m \geq 1$:
\beq \frac{d}{d\lambda} \E[Z_\lambda^m]  = \E[Z_\lambda^{m+1}] - \E[Z_\lambda]\,\E[Z_\lambda^m]. \eeq

\begin{lemma}[Derivatives of log-partition function as moments]\label{lem:log_partition_moments}
The derivatives of $\lambda\mapsto  u(\lambda)$ give the moments of the associated random variables:
\beq
u'(\lambda) = \mathbb{E}[Z_{\lambda}], \quad u^{(2)}(\lambda) = \operatorname{Var}[Z_{\lambda}], \quad u^{(3)}(\lambda) = \mathbb{E}[(Z_{\lambda} - \mathbb{E}[Z_{\lambda}])^3] = \operatorname{\kappa}_{3}[Z_\lambda].
\eeq
\end{lemma}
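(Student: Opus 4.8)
\textbf{Proof proposal for \Cref{lem:log_partition_moments}.}
The plan is to compute the derivatives of $u(\lambda) = \ln\!\big(\tfrac{1}{N}\sum_{k=1}^N e^{\lambda k}\big)$ directly, recognizing that $u$ is (up to an additive constant) the cumulant-generating function of the exponential family $\{P_\lambda\}$ built on the uniform reference measure on $\llbracket 1,N\rrbracket$, so that its successive derivatives are exactly the cumulants of $Z_\lambda$. First I would write $S(\lambda) := \sum_{k=1}^N e^{\lambda k}$ so that $u(\lambda) = \ln S(\lambda) - \ln N$, and observe that the partition-function normalization gives $P_\lambda(k) = e^{\lambda k}/S(\lambda)$, hence $S'(\lambda) = \sum_k k e^{\lambda k} = S(\lambda)\,\mathbb{E}[Z_\lambda]$. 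Differentiating $u = \ln S - \ln N$ then yields $u'(\lambda) = S'(\lambda)/S(\lambda) = \mathbb{E}[Z_\lambda]$, which is the first claimed identity.

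For the second derivative, I would differentiate $u'(\lambda) = \mathbb{E}[Z_\lambda]$ using the moment derivative identity stated in the excerpt with $m=1$, namely $\tfrac{d}{d\lambda}\mathbb{E}[Z_\lambda^m] = \mathbb{E}[Z_\lambda^{m+1}] - \mathbb{E}[Z_\lambda]\,\mathbb{E}[Z_\lambda^m]$. Taking $m=1$ gives $u^{(2)}(\lambda) = \mathbb{E}[Z_\lambda^2] - \mathbb{E}[Z_\lambda]^2 = \operatorname{Var}[Z_\lambda]$. For the third derivative, I would differentiate $u^{(2)}(\lambda) = \mathbb{E}[Z_\lambda^2] - \mathbb{E}[Z_\lambda]^2$ once more, applying the moment derivative identity with $m=2$ to the first term and with $m=1$ (together with the chain rule) to the second term. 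Collecting terms, $\tfrac{d}{d\lambda}\big(\mathbb{E}[Z_\lambda^2] - \mathbb{E}[Z_\lambda]^2\big) = \big(\mathbb{E}[Z_\lambda^3] - \mathbb{E}[Z_\lambda]\mathbb{E}[Z_\lambda^2]\big) - 2\mathbb{E}[Z_\lambda]\big(\mathbb{E}[Z_\lambda^2] - \mathbb{E}[Z_\lambda]^2\big)$, which simplifies to $\mathbb{E}[Z_\lambda^3] - 3\mathbb{E}[Z_\lambda]\mathbb{E}[Z_\lambda^2] + 2\mathbb{E}[Z_\lambda]^3 = \mathbb{E}\big[(Z_\lambda - \mathbb{E}[Z_\lambda])^3\big] = \operatorname{\kappa}_3[Z_\lambda]$, which is the third claimed identity.

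This lemma is essentially a bookkeeping exercise rather than a deep result, so there is no serious obstacle; the only point requiring a little care is the interchange of differentiation and the finite sum defining $S(\lambda)$, but since the sum is finite and each summand $e^{\lambda k}$ is smooth, differentiation under the sum is immediate and needs no justification beyond a remark. The practical value of the lemma is that it converts the analysis of $g_{\sigma,N}^{(2)}$ and $g_{\sigma,N}^{(3)}$ into statements about the variance and third central moment of the tilted laws $Z_\lambda$, which — combined with the symmetry identities $\operatorname{Var}[Z_{-\lambda}] = \operatorname{Var}[Z_\lambda]$ and the vanishing of $\operatorname{\kappa}_3[Z_0]$ — will drive the subsequent proof that $g_{\sigma,N}$ is non-negative when $\sigma^2 = \operatorname{Var}[Y]$.
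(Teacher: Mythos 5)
Your proposal is correct and follows essentially the same route as the paper: compute $u'(\lambda)=S'(\lambda)/S(\lambda)=\mathbb{E}[Z_\lambda]$ and then apply the moment derivative identity recursively (with $m=1$ and $m=2$) to obtain the variance and the third central moment. You merely spell out the algebra that the paper leaves implicit, and the computations check out.
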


\begin{proof}
These identities follow directly from the moment derivative formula combined with the expressions for the first moments. Applying the derivative identity recursively yields the expressions for \( u'(\lambda) \), \( u^{(2)}(\lambda) \), and \( u^{(3)}(\lambda) \), which correspond to the mean, variance, and third centered moment of $Z_\lambda$.
\end{proof}

From the symmetry identities, we know that $\operatorname{Var}[Z_{\lambda}]$ is an even function of $\lambda$. This symmetry allows us to restrict our analysis to $\mathbb{R}_+$. Moreover, given the derivative relation
  $ \operatorname{\kappa}_{3}[Z_\lambda] = \frac{d}{d\lambda}\operatorname{Var}[Z_{\lambda}]$ 
along with the evenness of the variance, it follows that the third central moment is an odd function of $\lambda$. The main technical task is then to prove that the third derivative $\lambda \mapsto u^{(3)}(\lambda)$ is negative on $\mathbb{R}_+$.

\begin{lemma}
[Negativity of the third derivative of the log-partition function]\label{lemma:neg_third_deriv}
Let \( N \geq 2 \) be an integer and let \( \lambda > 0 \) be a real number. Then the third derivative of the log-partition function is strictly negative. In other words,
\beq
u^{(3)}(\lambda) = -\frac{N^3 e^{\lambda N}(1 + e^{\lambda N})}{(1 - e^{\lambda N})^3} + \frac{e^{\lambda}(1 + e^{\lambda})}{(1 - e^{\lambda})^3} < 0, \quad \forall\, \lambda\in \mathbb{R^*_+}.
\eeq
Consequently,
\beq
\operatorname{\kappa}_{3}[Z_\lambda] < 0 \text{ for all } \lambda \in \mathbb{R}_+^*.
\eeq
\end{lemma}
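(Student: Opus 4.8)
\textbf{Proof proposal for Lemma~\ref{lemma:neg_third_deriv}.}

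The plan is to work directly with the closed-form expression for $u^{(3)}(\lambda)$. Since $u(\lambda) = \ln\!\big(\tfrac{1}{N}\sum_{k=1}^N e^{\lambda k}\big)$ and $\sum_{k=1}^N e^{\lambda k} = e^\lambda\tfrac{e^{\lambda N}-1}{e^\lambda-1}$, I would first write $u(\lambda) = -\ln N + \lambda + \ln(e^{\lambda N}-1) - \ln(e^\lambda - 1)$. Differentiating three times reduces everything to the single function $\phi(t) := \tfrac{d^3}{dt^3}\ln(e^t-1)$, evaluated at $t=\lambda$ and $t=\lambda N$ with a chain-rule factor $N^3$ on the second term. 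A direct computation (or recognition that $\tfrac{d^2}{dt^2}\ln(e^t-1) = -\tfrac{e^t}{(e^t-1)^2}$, the negative of the variance-type term) gives $\phi(t) = \tfrac{e^t(1+e^t)}{(e^t-1)^3} \cdot(-1)$ up to sign bookkeeping; matching the sign conventions yields exactly the stated formula $u^{(3)}(\lambda) = -\tfrac{N^3 e^{\lambda N}(1+e^{\lambda N})}{(1-e^{\lambda N})^3} + \tfrac{e^\lambda(1+e^\lambda)}{(1-e^\lambda)^3}$. This part is routine but must be done carefully to get the signs right.

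The substantive step is proving the inequality $u^{(3)}(\lambda)<0$ for all $\lambda>0$ and integers $N\ge 2$. I would define $\psi(t) := \tfrac{e^t(1+e^t)}{(1-e^t)^3}$ for $t>0$ and rewrite the claim as $N^3\,\psi(N\lambda) > \psi(\lambda)$, i.e.\ it suffices to show that $t\mapsto$ (something monotone) controls $\psi$. Note for $t>0$ we have $(1-e^t)^3<0$, so $\psi(t)<0$; multiplying by $N^3>0$ keeps it negative, and the claim becomes $|N^3\psi(N\lambda)| > |\psi(\lambda)|$ with both quantities negative — wait, I need $N^3\psi(N\lambda) - \psi(\lambda) < 0$, and since both terms are negative this is equivalent to $N^3|\psi(N\lambda)| > |\psi(\lambda)|$. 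Writing $|\psi(t)| = \tfrac{e^t(1+e^t)}{(e^t-1)^3}$, the cleanest route is to show the function $h(t) := t^3|\psi(t)| = \tfrac{t^3 e^t(1+e^t)}{(e^t-1)^3}$ is \emph{strictly increasing} on $(0,\infty)$; then $h(N\lambda) > h(\lambda)$ for $N\ge 2$, which after dividing by $\lambda^3$ is exactly $N^3|\psi(N\lambda)| > |\psi(\lambda)|$. So the lemma reduces to the monotonicity of $h$.

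To prove $h$ is increasing I would compute $h'(t)$ and show its sign. Setting $x=e^t$, one can write $h(t) = t^3 \cdot q(x)$ with $q(x) = \tfrac{x(1+x)}{(x-1)^3}$, and $h'(t) = t^2\big(3\,q(e^t) + t\,e^t q'(e^t)\big)$. It then suffices to prove $3\,q(x) + (\ln x)\,x\,q'(x) > 0$ for $x>1$. Substituting the explicit $q$ and $q'(x) = -\tfrac{x^2+4x+1}{(x-1)^4}$, clearing the positive denominator $(x-1)^4$, and writing $t=\ln x$, this becomes a transcendental inequality of the form $3(x^2-1) - t(x^2+4x+1) > 0$ for $x>1$ — but this fails for large $x$ (the $t$ term dominates). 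This sign reversal signals that $h$ is \emph{not} globally increasing; instead I expect $h$ to be increasing only up to some point and then decreasing, so the naive reduction is too crude.

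\textbf{The main obstacle, and the fix.} The genuine difficulty is that $h(t)=t^3|\psi(t)|$ is not monotone, so one cannot conclude $h(N\lambda)>h(\lambda)$ directly; moreover $h(0^+)$ and $h(\infty)$ must be compared. The right framework is the behavior of $u^{(3)}$ itself: since $u^{(2)}(\lambda)=\operatorname{Var}[Z_\lambda]$ is an even function (by the symmetry identities), $u^{(3)}$ is odd and $u^{(3)}(0)=0$; and as $\lambda\to+\infty$, $Z_\lambda$ concentrates at $N$ so $\operatorname{Var}[Z_\lambda]\to 0$, hence $u^{(3)}(\lambda)\to 0^-$. So $u^{(3)}$ vanishes at both ends of $\mathbb{R}_+$ and we need it strictly negative in between — equivalently $\operatorname{Var}[Z_\lambda]$ is strictly decreasing on $\mathbb{R}_+$. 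I would therefore replace the brute-force calculus with the following: prove $\tfrac{d}{d\lambda}\operatorname{Var}[Z_\lambda] < 0$ on $\mathbb{R}_+^*$ by exploiting that $P_\lambda$ is an exponential tilt of the uniform law on a finite \emph{integer interval}. Concretely, $\operatorname{Var}[Z_\lambda] = \sum_{j<k}(k-j)^2 P_\lambda(j)P_\lambda(k)\big/(\sum_k P_\lambda(k))^2$-type representations, or more efficiently: for the geometric-on-an-interval family, $u^{(2)}(\lambda) = \tfrac{e^\lambda}{(e^\lambda-1)^2} - \tfrac{N^2 e^{N\lambda}}{(e^{N\lambda}-1)^2}$, which is of the form $f(\lambda) - f(N\lambda)$ with $f(t) = \tfrac{e^t}{(e^t-1)^2}$ rescaled, namely $u^{(2)}(\lambda) = g(\lambda) - g(N\lambda)$ where $g(t) := \tfrac{t^2 e^t}{(e^t-1)^2}$ satisfies $g(\lambda) = \lambda^2 f(\lambda)$. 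Wait — matching: $\tfrac{N^2 e^{N\lambda}}{(e^{N\lambda}-1)^2} = \tfrac{1}{\lambda^2}\cdot\tfrac{(N\lambda)^2 e^{N\lambda}}{(e^{N\lambda}-1)^2} = \tfrac{1}{\lambda^2} g(N\lambda)$ and similarly $\tfrac{e^\lambda}{(e^\lambda-1)^2} = \tfrac{1}{\lambda^2} g(\lambda)$, so $u^{(2)}(\lambda) = \tfrac{1}{\lambda^2}\big(g(\lambda) - g(N\lambda)\big)$. Thus $\operatorname{Var}[Z_\lambda]>0$ is equivalent to $g$ strictly decreasing, and one computes $g(t) = \tfrac{t^2}{4\sinh^2(t/2)}$, which is manifestly even, positive, equal to $1$ at $t=0$, and strictly decreasing on $(0,\infty)$ since $\sinh(t/2)/(t/2)$ is strictly increasing (it is $\sum_{m\ge 0}\tfrac{(t/2)^{2m}}{(2m+1)!}$, a nonconstant power series in $t^2$ with positive coefficients). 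This gives $\operatorname{Var}[Z_\lambda]>0$ cleanly; for the \emph{derivative} sign I would differentiate $u^{(2)}(\lambda) = \tfrac{1}{\lambda^2}(g(\lambda)-g(N\lambda))$ and use the strict convexity/monotonicity of $g$ in the variable $t^2$: writing $g$ as a function $G$ of $s=t^2$, $G$ is strictly decreasing and the needed inequality $\tfrac{d}{d\lambda}\operatorname{Var}[Z_\lambda]<0$ becomes a comparison of $G$ and $G'$ at $\lambda^2$ and $N^2\lambda^2$, which follows from $G$ being completely monotone in $s$ (each coefficient positive and the function $\tfrac{s}{4\sinh^2(\sqrt s/2)}$ has an absolutely monotone reciprocal). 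The hard part, concretely, is verifying this complete-monotonicity / coefficient-positivity claim rigorously — I would do it via the product expansion $\sinh(\sqrt s/2) = \tfrac{\sqrt s}{2}\prod_{n\ge 1}\big(1 + \tfrac{s}{4\pi^2 n^2}\big)$, giving $G(s) = \prod_{n\ge 1}\big(1+\tfrac{s}{4\pi^2 n^2}\big)^{-2}$, from which $-G'(s)/G(s) = \sum_n \tfrac{1}{2\pi^2 n^2 + s/2}>0$ and is itself decreasing in $s$, yielding $\tfrac{d}{d\lambda}\operatorname{Var}[Z_\lambda]<0$ after a short computation. Then $u^{(3)}(\lambda) = \tfrac{d}{d\lambda}\operatorname{Var}[Z_\lambda] < 0$ on $\mathbb{R}_+^*$, and $\operatorname{\kappa}_3[Z_\lambda] = u^{(3)}(\lambda) < 0$ by Lemma~\ref{lem:log_partition_moments}, completing the proof.
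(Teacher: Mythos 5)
Your final argument is correct, but it takes a genuinely different route from the paper. The paper's proof is a short, elementary monotonicity argument on the closed form: setting $t=e^{\lambda}>1$ and $f(t)=\tfrac{t(t+1)}{(1-t)^3}$, it computes $f'(t)=\tfrac{1+4t+t^2}{(1-t)^4}>0$, compares the two terms at $t$ and $t^{N}$, and invokes $N^{3}\ge 8$ to conclude. You instead prove the equivalent statement $\tfrac{d}{d\lambda}\operatorname{Var}[Z_\lambda]<0$ by writing $u^{(2)}(\lambda)=\lambda^{-2}\bigl(G(\lambda^{2})-G(N^{2}\lambda^{2})\bigr)$ with $G(s)=s/\bigl(4\sinh^{2}(\sqrt{s}/2)\bigr)=\prod_{n\ge1}\bigl(1+\tfrac{s}{4\pi^{2}n^{2}}\bigr)^{-2}$ and exploiting log-convexity/complete monotonicity of $G$ via the Weierstrass product. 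This is heavier machinery, but it buys something real: since both compared quantities are negative, the delicate point is precisely to beat the factor $N^{3}$, and a bare monotonicity comparison of the two terms (which only gives $|f(e^{N\lambda})|<|f(e^{\lambda})|$) does not by itself do that, whereas your decomposition controls it quantitatively and in addition yields the structural fact that $\operatorname{Var}[Z_\lambda]$ is a decreasing function of $\lambda^2$ of a completely monotone type.

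Two caveats. First, there is a sign slip in your intermediate reduction: because $(1-e^{t})^{3}<0$, the claim $u^{(3)}(\lambda)<0$ is equivalent to $N^{3}\tfrac{e^{N\lambda}(1+e^{N\lambda})}{(e^{N\lambda}-1)^{3}}<\tfrac{e^{\lambda}(1+e^{\lambda})}{(e^{\lambda}-1)^{3}}$, i.e.\ $h(N\lambda)<h(\lambda)$ for $h(t)=\tfrac{t^{3}e^{t}(1+e^{t})}{(e^{t}-1)^{3}}$, not the reverse. So the relevant property of $h$ is that it is \emph{decreasing} (indeed $h(0^{+})=2$ and $h(+\infty)=0$), and your observation that $h$ is not increasing does not doom that route; it was simply aimed at the flipped inequality, and the corrected first route is exactly the statement your second route proves. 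Second, the concluding ``short computation'' should be written out: with $V(s):=\bigl(G(s)-G(N^{2}s)\bigr)/s$ one has $u^{(3)}(\lambda)=2\lambda V'(\lambda^{2})$ and $s^{2}V'(s)=W(N^{2}s)-W(s)$ where $W(u):=G(u)-uG'(u)$; since your product expansion gives $\ln G$ convex, hence $G''=G\bigl[(\ln G)''+((\ln G)')^{2}\bigr]>0$, one gets $W'(u)=-uG''(u)<0$, so $W(N^{2}s)<W(s)$ and $V'<0$, which is the desired negativity. With the sign corrected and that step made explicit, your proof is complete.
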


\begin{proof}
The explicit expression  of \( u^{(3)}(\lambda)\) follows by direct differentiation of  $u(\lambda)= \ln\left( \frac{1}{N}\underset{k=1}{\overset{N}{\sum}}e^{\lambda k}\right)$ and using the closed-form expression for the geometric sum. Then, consider the auxiliary function \beq f(t) = \frac{t(t + 1)}{(1 - t)^3}, \quad \text{for } t = e^{\lambda} > 1. \eeq
Its derivative is:
\[
f'(t) = \frac{1 + 4t + t^2}{(1 - t)^4}.
\]
Since both the numerator and the denominator are strictly positive for all \( t > 1 \), we conclude that \( f'(t) > 0 \). Hence, \( f \) is strictly increasing on the interval \( (1, \infty) \). Now fix \( \lambda > 0 \), so that \( t = e^{\lambda} > 1 \). Since \( N \geq 2 \), we have \( t^N > t \). By the monotonicity of \( f \), it follows that
\[
f(t^N) > f(t),
\]
which yields the inequality
\[
\frac{e^{\lambda N}(1 + e^{\lambda N})}{(1 - e^{\lambda N})^3} > \frac{e^{\lambda}(1 + e^{\lambda})}{(1 - e^{\lambda})^3}.
\]
Furthermore, since \( N^3 \geq 8 \) for all \( N \geq 2 \), we obtain
\[
\frac{N^3 e^{\lambda N}(1 + e^{\lambda N})}{(1 - e^{\lambda N})^3} > \frac{e^{\lambda}(1 + e^{\lambda})}{(1 - e^{\lambda})^3}.
\]
Therefore, the third derivative of the log-partition function satisfies
\[
u^{(3)}(\lambda) 
= - \frac{N^3 e^{\lambda N}(1 + e^{\lambda N})}{(1 - e^{\lambda N})^3}
+ \frac{e^{\lambda}(1 + e^{\lambda})}{(1 - e^{\lambda})^3}
< 0 \,\,\,,\,\, \forall \,\lambda>0
\]
which completes the proof of the lemma.
\end{proof}
The end of the proof of \Cref{TheoRDisceteDistribution} is now straightforward. 

\paragraph{Variance proxy optimality.} \Cref{lem:log_partition_moments} and
\Cref{lemma:neg_third_deriv} show that
\[
u^{(3)}(\lambda)=\kappa_{3}[Z_\lambda] = \frac{d}{d\lambda}\operatorname{Var}[Z_{\lambda}] < 0, \quad \forall\, \lambda > 0.
\]
As \( \operatorname{Var}[Z_{\lambda}] \) is an even function of \( \lambda \), it follows that it is strictly increasing on \( \mathbb{R}_- \), strictly decreasing on \( \mathbb{R}_+ \) and thus achieves a unique global maximum at \( \lambda = 0 \). In particular,
\[
\operatorname{Var}[Z_{\lambda}] \leq \operatorname{Var}[Z_0] = \operatorname{Var}[Y], \quad \forall\, \lambda \in \mathbb{R}.
\]
Hence, for any \( \sigma^2 \geq \operatorname{Var}[Y] \), we have \( g_{\sigma,N}^{(2)}(\lambda) = \sigma^2 - \operatorname{Var}[Z_\lambda] \geq 0 \), with equality if and only if \( \lambda = 0 \). Finally for any \( \sigma^2 \geq \operatorname{Var}[Y] \), given that \( g_{\sigma,N}^{(2)}(\lambda)\) has no solution on $\mathbb{R^*_+} \text{ and } \mathbb{R^*_-}$, it follows from \Cref{Prop01Zeros} that $g_{\sigma,N}$ is non-negative on $\mathbb{R}$. 
This shows that $Y$ is strictly sub-Gaussian 
ending the proof.


\begin{thebibliography}{20}
\providecommand{\natexlab}[1]{#1}
\providecommand{\url}[1]{\texttt{#1}}
\expandafter\ifx\csname urlstyle\endcsname\relax
  \providecommand{\doi}[1]{doi: #1}\else
  \providecommand{\doi}{doi: \begingroup \urlstyle{rm}\Url}\fi

\bibitem[Arbel et~al.(2020)Arbel, Marchal, and Nguyen]{arbel2020strict}
Julyan Arbel, Olivier Marchal, and Hien~D Nguyen.
\newblock {On strict sub-Gaussianity, optimal proxy variance and symmetry for bounded random variables}.
\newblock \emph{ESAIM: P-S}, 24:\penalty0 39--55, 2020.

\bibitem[Barreto et~al.(2025)Barreto, Marchal, and Arbel]{barreto2024optimal}
Mathias Barreto, Olivier Marchal, and Julyan Arbel.
\newblock {Optimal sub-Gaussian variance proxy for truncated Gaussian and exponential random variables}.
\newblock \emph{Statistics and Probability Letters}, 2025.

\bibitem[Berend and Kontorovich(2013)]{berend2013concentration}
Daniel Berend and Aryeh Kontorovich.
\newblock On the concentration of the missing mass.
\newblock \emph{Electronic Communications in Probability}, 18\penalty0 (3):\penalty0 1--7, 2013.

\bibitem[Boucheron et~al.(2013)Boucheron, Lugosi, and Massart]{boucheron2003concentration}
St\'{e}phane Boucheron, G\'{a}bor Lugosi, and Pascal Massart.
\newblock \emph{Concentration Inequalities: A Nonasymptotic Theory of Independence}.
\newblock Oxford University Press, 2013.

\bibitem[Bubeck et~al.(2012)Bubeck, Cesa-Bianchi, et~al.]{bubeck2012regret}
S{\'e}bastien Bubeck, Nicolo Cesa-Bianchi, et~al.
\newblock Regret analysis of stochastic and nonstochastic multi-armed bandit problems.
\newblock \emph{Foundations and Trends{\textregistered} in Machine Learning}, 5\penalty0 (1):\penalty0 1--122, 2012.

\bibitem[Buldygin and Kozachenko(1980)]{buldygin1980sub}
Valerii Buldygin and Yu~Kozachenko.
\newblock {Sub-Gaussian random variables}.
\newblock \emph{Ukrainian Mathematical Journal}, 32:\penalty0 483--489, 1980.

\bibitem[Catoni(2007)]{catoni2007pac}
Olivier Catoni.
\newblock \emph{PAC-Bayesian Supervised Classification: The Thermodynamics of Statistical Learning}, volume~56 of \emph{IMS Lecture Notes}.
\newblock IMS, 2007.

\bibitem[De~Blasi et~al.(2015)De~Blasi, Favaro, Lijoi, Mena, Pr{\"{u}}nster, and Ruggiero]{deblasi2015gibbs}
Pierpaolo De~Blasi, Stefano Favaro, Antonio Lijoi, Rams{\'e}s~H Mena, Igor Pr{\"{u}}nster, and Matteo Ruggiero.
\newblock {Are Gibbs-type priors the most natural generalization of the Dirichlet process?}
\newblock \emph{IEEE Transactions on Pattern Analysis and Machine Intelligence}, 37\penalty0 (2):\penalty0 212--229, 2015.

\bibitem[Doss and Sellke(1982)]{doss1982tails}
Hani Doss and Thomas Sellke.
\newblock {The tails of probabilities chosen from a Dirichlet prior}.
\newblock \emph{The Annals of Statistics}, 10\penalty0 (4):\penalty0 1302--1305, 1982.

\bibitem[Feng(2007)]{feng2007large}
Shui Feng.
\newblock Large deviations for dirichlet processes and poisson-dirichlet distribution with two parameters.
\newblock \emph{Electron. J. Probab.}, 2007.

\bibitem[Ferguson(1973)]{ferguson1973bayesian}
T.S. Ferguson.
\newblock {A Bayesian analysis of some nonparametric problems}.
\newblock \emph{The Annals of Statistics}, 1\penalty0 (2):\penalty0 209--230, 1973.
\newblock ISSN 0090-5364.

\bibitem[Hoeffding(1963)]{Hoeffding1963}
Wassily Hoeffding.
\newblock {Probability Inequalities for Sums of Bounded Random Variables}.
\newblock \emph{Journal of the American Statistical Association}, 58\penalty0 (301):\penalty0 13--30, 1963.

\bibitem[Kahane(1960)]{Kahane1960PropritsLD}
Jean-Pierre Kahane.
\newblock {Propri{\'e}t{\'e}s locales des fonctions {\`a} s{\'e}ries de Fourier al{\'e}atoires}.
\newblock \emph{Studia Mathematica}, 19:\penalty0 1--25, 1960.

\bibitem[Kearns and Saul(1998)]{Kearns1998LargeDM}
Michael Kearns and Lawrence~K. Saul.
\newblock Large deviation methods for approximate probabilistic inference.
\newblock \emph{UAI}, 1998.

\bibitem[Marchal and Arbel(2017)]{marchal2017sub}
Olivier Marchal and Julyan Arbel.
\newblock {On the sub-Gaussianity of the Beta and Dirichlet distributions}.
\newblock \emph{Electronic Communications in Probability}, 22\penalty0 (54):\penalty0 1--14, 2017.

\bibitem[Pitman and Yor(1997)]{pitman1997two}
Jim Pitman and Marc Yor.
\newblock {The two-parameter Poisson-Dirichlet distribution derived from a stable subordinator}.
\newblock \emph{The Annals of Probability}, 25\penalty0 (2):\penalty0 855--900, 1997.

\bibitem[Raginsky and Sason(2013)]{raginsky2013concentration}
Maxim Raginsky and Igal Sason.
\newblock Concentration of measure inequalities in information theory, communications, and coding.
\newblock \emph{Foundations and Trends{\textregistered} in Communications and Information Theory}, 10\penalty0 (1-2):\penalty0 1--246, 2013.

\bibitem[Rudelson and Vershynin(2010)]{rudelson2010non}
Mark Rudelson and Roman Vershynin.
\newblock Non-asymptotic theory of random matrices: extreme singular values.
\newblock In \emph{Proceedings of the ICM}, 2010.

\bibitem[Vladimirova et~al.(2019)Vladimirova, Verbeek, Mesejo, and Arbel]{vladimirova2019understanding}
Mariia Vladimirova, Jakob Verbeek, Pablo Mesejo, and Julyan Arbel.
\newblock {Understanding priors in Bayesian neural networks at the unit level}.
\newblock In \emph{International Conference on Machine Learning}, 2019.

\bibitem[Vladimirova et~al.(2020)Vladimirova, Girard, Nguyen, and Arbel]{vladimirova2020sub}
Mariia Vladimirova, St{\'e}phane Girard, Hien Nguyen, and Julyan Arbel.
\newblock {Sub-Weibull distributions: Generalizing sub-Gaussian and sub-Exponential properties to heavier tailed distributions}.
\newblock \emph{Stat}, 9\penalty0 (1), 2020.

\end{thebibliography}
\end{document}